\documentclass{amsart}
\usepackage[english]{babel}
\usepackage[utf8x]{inputenc}
\usepackage[T1]{fontenc}

\usepackage{a4wide}				
\usepackage{emptypage}          
\usepackage{amssymb}            
\usepackage{amsmath}            
\allowdisplaybreaks              
\usepackage[scr=dutchcal]{mathalfa}

\usepackage{amsthm}             
\usepackage{graphicx}           
\usepackage{color}              
\usepackage{setspace}           
\usepackage{xifthen}            
\usepackage[makeroom]{cancel}   
\usepackage{mathtools}          
\usepackage{thmtools}
\usepackage{etoolbox}           
\usepackage{hyperref}
\hypersetup{
    colorlinks=true,
    linkcolor=black,
    filecolor=magenta,      
    urlcolor=cyan,
    citecolor=black
    }
\usepackage[hang,flushmargin]{footmisc}     
\usepackage[noabbrev]{cleveref}           
\usepackage{tikz}               
\usetikzlibrary{arrows.meta}    
\usetikzlibrary{decorations.pathreplacing} 
\usetikzlibrary{decorations.markings}
\usepackage{adjustbox}          
\usepackage{changepage}         
\usepackage{dsfont}             
\usepackage{wrapfig}           
\usepackage{float}             
\usepackage{esint}				

\usepackage{amsfonts}
\usepackage{pdfsync}
\usepackage{epsfig}
\usepackage{url}
\usepackage{calligra}

\newcommand{\virgolette}[1]{``#1''}
\usepackage{amssymb}      
\usepackage{eufrak}       
\newcommand{\D}{\ensuremath{\EuFrak{D}}}

\newcommand{\R}{\mathbb{R}}
\newcommand{\N}{\mathbb{N}}
\newcommand{\Z}{\mathbb{Z}}
\newcommand{\T}{\mathbb{T}_T^d}
\newcommand{\ud}{\mathrm{d}}
\newcommand{\F}{\mathcal{F}}
\newcommand{\FF}{\underline{\mathcal{F}_p^0}}
\newcommand{\Ff}{\mathscr{f}_p}
\newcommand{\Fff}{\underline{\mathscr{f}_p}}

\newcommand{\J}{\mathcal{J}}
\newcommand{\I}{\mathcal{I}}
\newcommand{\X}{\mathbb{X}}

\newcommand{\suchthat}{\;\ifnum\currentgrouptype=16 \middle\fi|\;}

\newcommand{\eps}{\varepsilon}

\newcommand{\comment}[1]{}          

\theoremstyle{plain}
\newtheorem{theorem}{Theorem}[section]
\newtheorem{lemma}[theorem]{Lemma}
\newtheorem{proposition}[theorem]{Proposition}

\theoremstyle{definition}
\newtheorem{definition}[theorem]{Definition}
\numberwithin{equation}{section}
\newtheorem{remark}[theorem]{Remark}

\AfterEndEnvironment{definition}{\noindent\ignorespaces}    
\AfterEndEnvironment{proposition}{\noindent\ignorespaces}
\AfterEndEnvironment{remark}{\noindent\ignorespaces}
\AfterEndEnvironment{lemma}{\noindent\ignorespaces}
\AfterEndEnvironment{theorem}{\noindent\ignorespaces}
\AfterEndEnvironment{proof}{\noindent\ignorespaces}

\DeclareRobustCommand{\rchi}{{\mathpalette\irchi\relax}}
\newcommand{\irchi}[2]{\raisebox{\depth}{$#1\chi$}} 

 \title[\(\Gamma\)-Convergence for \((s,p)\)-Gagliardo Energies and minimisation results]%
{\textbf{{\large 
     \(\Gamma\)-convergence, variational analysis and characterisation of minimisers for \\ \lowercase{\((s,p)\)}-Gagliardo energies in the flat \lowercase{\(d\)}-torus}}}

\author[G. Pini, F. Santilli]{G. Pini, F. Santilli}
\date{}

\begin{document}

\maketitle

\begin{abstract}
This paper deals with the variational analysis, for every \(s \in (0,1)\) and \(p \in [1,+\infty)\), of $(s,p)$-Gagliardo seminorms in a periodic setting. First, we consider the space of $L^p$, $T$-periodic functions and define the energy functional $\mathcal{F}_p^s$ as the density of the \(d\)-dimensional $(s,p)$-Gagliardo seminorm over the periodic cell. Our goal is to rigorously characterise the $\Gamma$-limits of this functional as the fractional parameter $s$ approaches its endpoint values, $0^+$ and $1^-$. We prove that, as $s \to 0^+$, the rescaled energy $s\mathcal{F}_p^s$ $\Gamma$-converges to a functional $\mathcal{F}_p^0$ defined by the double integral of $|u(x)-u(y)|^p$ over the periodic cell. Then, for the limit as $s \to 1^-$, we establish that the rescaled energy $(1-s)\mathcal{F}_p^s$ $\Gamma$-converges to the classical Dirichlet $p$-energy, extending known results from bounded domains to the periodic framework.

Finally, we analyse the one-dimensional minimiser of the energy $\mathcal{F}_p^s$ for $s \in (0,1)$ and the limit functional $\mathcal{F}_p^0$ within the special class of piecewise affine periodic functions whose distributional derivative consists of a constant absolutely continuous part and a singular part with opposite sign and quantised jumps. In this setting, the energy depends only on the position of these jump points, and we prove that the absolute minimiser is achieved by their equispaced configuration.

\vskip .3truecm \noindent \textsc{Keywords}: \(\Gamma\)-convergence, Fractional seminorms, Periodic minimisers
\vskip.1truecm \noindent \textsc{Mathematics Subject Classification}: 49J45, 49J10, 46E35.
\end{abstract}

\tableofcontents

\section*{Introduction} \label{sez: introduzione} 
\noindent
The study of non-local energies has become a central theme in the field of Calculus of Variations. A canonical example is the family of $(s,p)$-Gagliardo seminorms \([u]^p_{s,p}\), for \(s \in (0,1)\), \(p \in [1,+\infty)\). The asymptotic behaviour of these seminorms as the fractional parameter $s$ approaches its endpoint values, $0^+$ and $1^-$, has been the subject of intense investigation. A foundational result in this area is due to Bourgain, Brezis and Mironescu \cite{Brezis01, Brezis02}, in which the authors show that the rescaled energy $(1-s)[u]^p_{s,p}$ converges pointwise to the Dirichlet $p$-energy as $s \to 1^-$. In the same spirit, Maz'ya and Shaposhnikova \cite{Mazya02} also proved the pointwise convergence of the rescaled \((s,p)\)-Gagliardo seminorm \(s[u]^p_{s,p}\) to the \(L^p\)-norm as \(s\to 0^+\). 

This has motivated a vast body of work \cite{Ambrosio20,Brasco15,Crismale23,Goldman25,DeLuca24,Ponce04} to understand this convergence in the robust variational framework of $\Gamma$-convergence. In \cite{Crismale23}, the analysis for the quadratic case ($p=2$) was performed on a bounded domain $\Omega \subset \mathbb{R}^d$ with homogeneous Dirichlet boundary conditions. The authors proved that as $s \to 1^-$, the rescaled energy $(1-s)[u]^2_{s,2}$ $\Gamma$-converges (with respect to the strong \(L^2\) topology) to the classical Dirichlet integral. In the other direction, as $s \to 0^+$, they established that $s[u]^2_{s,2}$ $\Gamma$-converges (with respect to the weak \(L^2\) topology) to the $L^2$-norm. Since then, these results have been extended in various directions, including general exponents $p\in[1,+\infty)$ and energies with non-trivial weights \cite{Pagliari24, Kubin25}. 

The first goal of this paper is to extend the variational analysis of $(s,p)$-Gagliardo seminorms to the periodic framework. We consider the space $L^p(\T)$ of $T$-periodic functions on $\mathbb{R}^d$, for \(p\in[1,+\infty)\), and define our energy functional as the \emph{density} of the $(s,p)$-Gagliardo seminorm over the periodic cell $Q_T\coloneqq [0,T]^d$, denoted by 
\begin{equation} \label{eq: seminorma sp}
	\F^s_p(u)\coloneqq \int_{Q_T}\int_{\R^d} \frac{|u(x)-u(y)|^p}{|x-y|^{d+sp}} \,\ud y \ud x.
\end{equation}
Our main objective is to rigorously characterise the $\Gamma$-limits of this functional as $s \to 0^+$ and $s \to 1^-$ for any exponent $p \in [1, +\infty)$.

Our first main result is \Cref{thm: G-conv per s a 0}, which addresses the limit of the functional in \cref{eq: seminorma sp} as $s \to 0^+$. We prove that the rescaled energy $s\F_p^s$ $\Gamma$-converges in the weak topology of $L^p(\T)$ (up to a multiplicative constant) to the functional 
\[
\F^0_p(u)\coloneqq \iint_{Q_T^2} |u(x)-u(y)|^p\, \ud x \ud y.
\] 
Although this functional differs from the one obtained in the bounded domain case, it shares its core properties of non-locality and the loss of information about the gradient. Our proof strategy relies on establishing sharp energy estimates (\Cref{prop: energy estimates}) that relate the energy to its \emph{core} and \emph{tail} components. These estimates are crucial for proving the compactness of minimising sequences and for establishing the lower bound in \Cref{thm: G-conv per s a 0}, which is carried out using a combination of Fatou Lemma and the weak lower semicontinuity of the $L^p$-norm. The upper bound then follows from pointwise convergence together with a density argument.

Our second contribution (\Cref{thm: G-conv per s a 1}) concerns the limit as $s \to 1^-$. We establish that the rescaled energy $(1-s)\F_p^s$ $\Gamma$-converges in the strong topology of \(L^p(\T)\) to a multiple of the Dirichlet $p$-energy $\int_{Q_T} |\nabla u|^p \, \ud x$ for \(p\in(1,+\infty)\), and to \(|\mathrm{D}u|(Q_T)\) for \(p=1\). This result agrees with those in \cite{Brezis01,Brezis02,Kubin25} and shows that the local behaviour of the limit functional is again preserved in the periodic setting. The proof requires a delicate analysis. We first establish compactness by adapting the techniques of \cite{Crismale23, Kubin25} to the periodic case. This relies on a key estimate (\Cref{lemma: stime kubin 2}) that controls the $L^p$-modulus of continuity, which yields strong compactness via the Fréchet-Kolmogorov Theorem. The lower bound is then proved for mollified functions using a Taylor expansion argument, while the upper bound follows from pointwise convergence together with a density argument.

In the last Section, we analyse the one-dimensional minimisers of the energies $\F^s_p$ and their limit $\F^0_p$. Specifically, our problem consists of minimising the one-dimensional energy $\mathcal F^s_p$ among $T$-periodic piecewise affine functions $u$ whose distributional derivative consists of a constant absolutely continuous part and a singular part with opposite sign and quantised jumps; we will show that the unique minimiser (up to translations) of this problem is the function with equispaced jumps. 

Minimising \(s\)-fractional seminorms with constraints on the slopes of the functions has been the subject of extensive analysis. In the context of phase separation models, such analysis has been carried out in \cite{Giuliani11}, where admissible functions are assumed to have only the slopes \(+1\) and \(-1\). This symmetry assumption on the slopes allows one to prove the minimality of the periodic configuration through the so-called {\it reflection positivity} technique, a tool borrowed from field theory, and already exploited in \cite{Giuliani07,Giuliani08} to prove the periodicity of minimisers in different but related non-linear models. Our case corresponds to having slopes $1$ and $-\infty$, and it is non-trivial to extend the reflection positivity technique from the case of symmetric slopes to that of non-symmetric ones. 

Therefore, we adopt a completely different approach, more inspired by \cite{Goldman25,DeLuca24}. Before describing it, we briefly discuss the physical motivation of our analysis, namely the behaviour of the elastic energy induced by edge dislocations at semi-coherent interfaces \cite{Fanzon20}. This subject was originally addressed by van der Merwe \cite{van50}, where, in the framework of 2D linear isotropic elasticity, the author computes the sharp energy density {\it assuming} a periodic distribution of dislocations at the interface \cite{Read50}. In this paper, we aim at {\it proving} the minimality of the periodic configuration, working within the Nabarro-Peierls formalism \cite{Garroni09,Garroni05}. More precisely, minimising the 2D bulk elastic energy subject to a mismatch at the interface between two crystal lattices reduces to a 1D trace energy depending solely on the boundary datum. Since the bulk elastic energy is quadratic, this effective boundary energy is precisely the fractional seminorm \(\mathcal{F}^\frac 1 2_2\) \cite{cinesi20}. Since such a fractional value (\(s\!=\!\frac{1}{2}\)) is in some sense critical, as we will comment later, the energy is \emph{a priori} infinite for discontinuous functions. For this reason, at the beginning of \Cref{sez: minimi rigidi}, we will present two possible solutions: on one hand one can consider the natural ambient space of \(H^\frac{1}{2}\)-functions, on the other hand one can apply a standard regularisation procedure to these functions. We will briefly present the first approach applied to the linear elasticity model. Nevertheless, we choose to adopt the second approach in view of the following consideration. Indeed, concerning the admissible displacement traces, we adopt a strong modelling simplification: we assume the function has a constant slope of \(1\) in the elastic phase, and \(-\infty\) at any dislocation site. In other words, dislocations are represented by negative Dirac deltas, whose quantised nature reflecting the discrete structure of the defects. While the minimality of the periodic distribution under these exact constraints was proven in \cite{Goldman25} for \(p=2\) and \(s\in(0,\frac{1}{2})\), in this paper we generalise the result to the non-linear regime \(p\neq 2\).

The focus on a one-dimensional scalar fractional energy is rigorously motivated by a dimensional reduction strategy detailed at the beginning of \Cref{sez: minimi rigidi}. We first consider the classical physical setting of linear isotropic elasticity (which corresponds to \(p=2\) and \(s=\frac{1}{2}\)). By modelling edge dislocations as straight, parallel lines in a 3D crystal, translation invariance allows us to reduce the bulk elastic energy to a 2D cross-section. Under standard mirror-symmetry assumptions across the slip plane, the vertical stress components vanish at the interface. Consequently, via the Dirichlet-to-Neumann map, the minimised 2D bulk elastic energy reduces to the one-dimensional \(H^{\frac{1}{2}}\)-Gagliardo seminorm of the scalar slip displacement along the interface. 

Motivated by this, we generalise the framework to a $d$-dimensional domain by considering the bulk Dirichlet \(p\)-energy of the displacement. The presence of defects modelled as straight, parallel \((d-2)\)-dimensional manifolds allows us to exploit translation invariance, naturally reducing the analysis to a 2D cross-section orthogonal to the defects. On this 2D plane, the defects impose a measure-valued curl constraint confined to the direction of a fixed vector (that we take to be $e_1$). By the strict convexity of the \(p\)-energy, we can assume the last \(d-1\) components of the displacement to be zero. This seamlessly reduces the $d$-dimensional vectorial problem to a purely 2D scalar one.

However, when deriving the effective 1D trace energy of this \(p\)-harmonic extension via integration by parts, the exact reduction to the \((1-\frac{1}{p}, p)\)-Gagliardo seminorm holds true only for \(p=2\). For \(p \neq 2\), the trace operator is no longer an isometry (up to a constant) on \(p\)-harmonic extensions, and the 1D fractional seminorm is only \emph{equivalent} to the minimised 2D bulk energy. Nevertheless, the energy given by the 1D \((s,p)\)-Gagliardo seminorm is still mathematically intriguing, therefore we select it as the primary object of our mathematical investigation.  

We first focus on the {\it sub-critical case} $sp<1$. To this end, since the energy is invariant under translations, by a slight abuse of notation we can express it directly as a function of the position of the jump points, which are not necessarily distinct. More precisely, a \emph{configuration} is represented by a vector ${\bf X}=(x_1,\ldots,x_T)$ with $x_l\in[0,T)$. To any configuration \(\bold X \), we can associate a unique function $u=u[\bold{X}]$ (up to horizontal and vertical translations) satisfying $\mathrm{D}u=\mathcal{L}^1-\sum_{k\in\Z}\sum_{l=1}^{T} \delta_{x_l+kT}$; we define $\F^s_p[\bold{X}]:=\F^s_p(u[\bold{X}])$. In order to prove that the only minimiser of $\mathcal{F}^s_p$ is the equispaced configuration \(\bold{X}^\mathrm{eq}\), we follow the scheme in \cite{Goldman25}. First, we compute the first variation of $\mathcal{F}^s_p$ with respect to perturbations of the positions of the jump points (which we refer to as \emph{rigid variations}) and we show that it vanishes for the equispaced configuration, thus showing it is a critical point of $\mathcal{F}^s_p$. Then, we prove that configurations of overlapping jumps cannot be critical points of the energy. This is achieved with \Cref{lemma: cuspidi} for \(p>1\), showing that the first variation has a cusp at overlapping jumps, and with \Cref{lemma: cuspidi p=1} for \(p=1\), where we show that separating overlapping jumps by a sufficiently small distance lowers the energy. 
Finally, we compute the second variation of $\mathcal{F}^s_p$ on configurations with distinct jump points, proving that it is positive-definite. This immediately implies that the equispaced configuration is indeed the unique minimiser of the energy $\mathcal{F}^s_p$. It is worth noting that for $p=2$, the first variation of $\mathcal{F}^s_p$ with respect to rigid variations coincides (at regular points of $u$) with the $s$-fractional Laplacian of the function $u$. In our non-linear case this is no longer true; indeed, it is easy to prove that the first variation of the \((s,p)\)-Gagliardo seminorm is the classical \(s\)-fractional \(p\)-Laplace operator for \emph{smooth variations}, but this is not the case for rigid variations when \(p\neq2\). The first variation of the functional \(\F^s_p\) with respect to perturbations of the jump points is 
\[
\left(- \D \right)^s_p u(x) \coloneqq 2 \int_\R \frac{\left| u(x)-u(y)+1\right|^p-\left|u(x)-u(y)\right|^p}{\left|x-y\right|^{1+sp}} \,\ud y,
\]
defined for the functions we described above. We remark that, in this case, perturbations of the jump points correspond to inner variations of the function $u$, as shown in \cref{def: var rigida}.

We then consider the \emph{critical} and \emph{super-critical cases} $sp\ge 1$. Formally, one may adopt the same strategy used in the sub-critical case to deduce that the unique minimiser (up to translations) of $\F^{s}_p$ is the equispaced configuration. The main issue in this setting is that the energy $\F^{s}_p$ is identically $+\infty$ for every configuration (indeed \( u[ \bold{X}] \notin W^{s,p}(\mathbb{T}^1_T) \) for any \(sp\geq 1\)). To overcome this issue, we adopt a standard regularisation approach. We analyse the energy of a regularised functional \(\F^{s,\eps}_p[\bold{X}]\coloneqq \F^s_p(u[\bold{X}]\ast\rho_\eps)\), where $\rho_\eps$ is a standard mollifier, under the further assumption that any two points in \(\bold{X}\) are separated by at least \(4\eps\). Then, as in the sub-critical case, we compute the first and the second variations of the functional \(\mathcal F_p^{s,\eps}\), with respect to perturbations of jump locations, which are now the centres of the regularised transitions. Contrary to the sub-critical case, as shown in \cref{rmk: variazione rigida moscia}, such perturbations correspond to external variations of the function $u^\eps$, so that the first variation of $\mathcal F_p^{s,\eps}$ is exactly the standard $s$-fractional $p$-Laplacian \cite{Mazon16}
\begin{equation*}
	\left(- \Delta \right)^s_p u^\eps(x) \coloneqq p \int_{\R} \frac{\left\lvert u^\eps(x)-u^\eps(y)\right\rvert^{p-2}(u^\eps(x)-u^\eps(y))}{|x-y|^{1+sp}} \,\ud y.
\end{equation*}
We are then able to prove that the equispaced configuration is the unique minimiser for \(\F^{s,\eps}_p\) in the class of configurations with minimal distance \(4\eps\). The main issue in generalising this result to the whole space of configurations is that the sign of the second variation of \(\F^{s,\eps}_p\) is not clear when jump points are close to each other. Nevertheless, we can recover the full result in the \emph{critical regime} \(sp=1\). In particular, we prove in \Cref{prop: caso critico salvo} that the energy grows logarithmically with respect to the distance between two jump points as they approach each other; therefore, it is always energetically favourable to maintain a set positive minimum distance.  

Finally, we study the minimality of the equispaced configuration for the limit functional $\mathcal F^0_p$ as well. To this end, we use a different approach inspired by M{\"u}ller \cite{Müller93}, in which the existence of periodic minimisers was established using convexity techniques for a local energy like the \(L^2\)-norm, on bounded domains, in the case of slopes having equal moduli and opposite signs, which was later extended to general opposite slopes in Ren and Wei \cite{RenWei03}. Thanks to the \(\Gamma\)-convergence result, we already know that the equispaced configuration is a minimiser of the energy \(\F^0_p\); however, the convexity of the functional is the key property ensuring that \(\bold{X}^{\mathrm{eq}}\) is the \emph{unique} minimiser.

The paper is structured as follows. In \Cref{sez: notazione}, we introduce some notation regarding periodic fractional spaces and some useful energy estimates. In \Cref{sez: gamma convergenza}, we study the \(\Gamma\)-convergence of our functional as \(s\to0^+\) and \(s\to 1^-\). In \Cref{sez: minimi rigidi}, we characterise the periodic minimisers for \(\F^s_p\) in the sub-critical (\(sp <1\)) and in the critical and super-critical (\(sp\geq 1\)) cases, and for its \(\Gamma\)-limit \(\F^0_p\).

\section{Notation and preliminary results}
\label{sez: notazione} 

In this paper, we consider the energy given by the \((s,p)\)-Gagliardo fractional seminorm for \(T\)-periodic functions. 
Let \(p\in[1,+\infty)\) and \(T>0\) be the periodicity of the functions we will consider and let us denote by \((e_1,\ldots,e_d)\) the canonical basis of \(\R^d\). We denote by \(\T\) the \(d\)-dimensional \(T\)-torus and we define the space of \(L^p\) functions on \(\T\) as
\begin{equation*}
	L^p(\T)\coloneqq \left\{u:\R^d\to\R \; \text{measurable} \;\left|\; 
	\begin{aligned}
		&u(x+T e_i)=u(x) \text{ for a.e. }x\in\R^d \\&\forall i=1,\ldots,d \text{ and } u|_{Q_T}\in L^p(Q_T) 
	\end{aligned}
	\right.\right\},
\end{equation*}
endowed with the norm 
\[
\|u\|_{L^p(\T)}\coloneqq\bigg(\int_{Q_T} |u(x)|^p \ud x\bigg)^{\frac{1}{p}}.
\]
In addition, we denote by \(\fint u\) the average of \(u\) on the periodic cell \(Q_T\). To be more precise, the average of \(u\) is the quantity
\[
\fint_{Q_T} u(x)\, \ud x\coloneqq\frac{1}{T^d}\int_{Q_T} u(x)\, \ud x.
\]
Let \( s \in (0,1)\). We similarly define the \(T\)-periodic fractional Sobolev space
\begin{equation*}
	W^{s,p}(\T)\coloneqq \left\{ u\in L^p(\T) \,|\, [u]_{s,p} <+\infty \right\},
\end{equation*}
where we denote by \([u]_{s,p}\) the \((s,p)\)-periodic Gagliardo seminorm:
\begin{equation*}
	[u]_{s,p} \coloneqq \left(\int_{Q_T}\int_{\R^d} \frac{|u(x)-u(y)|^p}{|x-y|^{d+sp}} \, \ud y \ud x\right)^{\frac{1}{p}}.
\end{equation*}
On these spaces, we use the norm 
\[
\|u\|_{W^{s,p}(\T)}\coloneqq \|u\|_{L^p(\T)}+ [u]_{s,p}.
\]
\begin{remark} 
    For functions \(u\in L^p(\Omega)\), where \(\Omega\) is an open set of \(\R^d\) the usual definition of the Gagliardo seminorm is the following:
    \[
    \left(\int_{\R^d}\int_{\R^d} \frac{|u(x)-u(y)|^p}{|x-y|^{d+sp}} \, \ud y \ud x\right)^{\frac{1}{p}}.
    \]
    Since the \(L^p(\T)\) functions we consider are defined on \(\R^d\) but they do not belong to \(L^p(\R^d)\), this definition of seminorm is not well-posed. Indeed
    \[
    \left(\int_{\R^d}\int_{\R^d} \frac{|u(x)-u(y)|^p}{|x-y|^{d+sp}} \, \ud y \ud x\right)^{\frac{1}{p}}=+\infty, \text{ for every non-constant } u\in L^p(\T),
    \]
    as shown in \cite{Brezis02_2}. To avoid this complication, we call \emph{Gagliardo seminorm} what in reality is the density of the Gagliardo seminorm on the periodic cell \(Q_T\).
\end{remark}

\begin{remark} \label{rmk: inclusioni sobolev}
    The spaces \(L^p(\T)\) and \(W^{s,p}(\T)\) inherit the properties of the usual \(L^p\) and \(W^{s,p}\) spaces (see \cite{Palatucci12,Leoni23, Triebel78} for references about fractional Sobolev spaces). They are Banach spaces, reflexive for \(p\in(1,+\infty)\) and the usual \(L^p\) and Sobolev embeddings hold. Indeed, thanks to periodicity, we only have to consider the restriction of \(u\) to \(Q_T\); therefore, these periodic spaces inherit the same inclusions that hold for \(L^p\) and Sobolev spaces on bounded domains:
    \[
    L^q(\T) \hookrightarrow L^p(\T), \qquad  \text{ where }  1\leq p\leq q\leq +\infty
    \]
    and for \(p\in[1,+\infty)\) 
    \[
    W^{s_2,p}(\T) \hookrightarrow W^{s_1,p}(\T), \qquad  \text{ where } 0\leq s_1\leq s_2 \leq 1.
    \]
\end{remark}
We also recall the space of periodic functions of \emph{bounded variation}: 
 \begin{equation*}
	 \mathrm{BV}(\T)\coloneqq \left\{ u:\R^d\to\R \text{ measurable}\;\left|\;
	\begin{aligned}
		&u(x+T e_i)=u(x) \text{ for a.e. }x\in\R^d \\&\forall i=1,\ldots,d \text{ and } u|_{Q_T}\in \mathrm{BV}(Q_T) 
	\end{aligned}
	\right.\right\},
\end{equation*}
endowed with the norm 
\[
\|u\|_{\mathrm{BV}(\T)}= \|u\|_{L^1(\T)}+|\mathrm{D}u|(\T),
\]
where \(|\mathrm{D}u|(Q_T)=\int_{Q_T}\ud |\mathrm{D}u|\). Finally we denote by
 \begin{equation*}
	 C^\infty(\T)\coloneqq \left\{ u \in C^\infty (\R^d)\;\left|\; u(x+T e_i)=u(x) \,\forall x\in\R^d, \forall i=1,\ldots,d \right.\right\}
\end{equation*}
the space of \(T\)-periodic smooth functions.

In this paper, we are going to discuss the \(\Gamma\)-convergence of the energy functional
\begin{equation*}
	\F_p^s(u) = [u]_{s,p}^p
\end{equation*}
as \(s \to 0^+\) and \( s \to 1^-\). Trivially, the functional is finite whenever \(u\in W^{s,p}(\T)\). To achieve the \(\Gamma\)-convergence result we will rescale the functional appropriately (as discussed in \cite{Brezis01, Brezis02, Crismale23}); in particular, we scale the functional by \(s\) in the limit as \(s \to 0^+\) and by \((1-s)\) as \(s \to 1^-\).

We now show some preliminary results and estimates that will be useful in the following pointwise and \(\Gamma\)-convergence results. Given a set \(E \subset \R^d\), let us denote by \([u]_{s,p}|_E\) the \emph{cut-off} Gagliardo seminorm, i.e.
\[
    [u]_{s,p}|_E\coloneqq \int_{Q_T}\int_E \frac{|u(x)-u(y)|^p}{|x-y|^{d+sp}}\, \ud y \ud x.
\]
Moreover, we set for brevity \(B_R \coloneqq B_R(0)\) for every \(R>0\) and, from now on, we will denote \(\omega_d\coloneqq |\mathbb{S}^{d-1}|\).
\begin{proposition}[Energy estimates] \label{prop: energy estimates}
	Let \(p\in[1,+\infty)\) and \(u\in L^p(\T)\). Then for all \(s\in(0,1)\) and for all \(R>2T\sqrt{d}\) the following estimates hold:
	\[
[u]_{s,p}^p|_{B_R}+ \frac{d\omega_d \left(\frac{R}{T}+2\sqrt{d}\right)^{-sp}}{sp(Tc_1(R,d))^{d+sp}}\F_p^0(u) \leq \F_p^s (u) \leq [u]_{s,p}^p|_{B_R} + \frac{d\omega_d \left(\frac{R}{T}-2\sqrt{d}\right)^{-sp}}{sp(Tc_2(R,d))^{d+sp}}\F_p^0(u) ,
	\]
    with constants 
    \[
    c_1(R,d)\coloneqq\frac{R+2T\sqrt{d}}{R+T\sqrt{d}} \qquad \text{and} \qquad c_2(R,d)\coloneqq\frac{R-2T\sqrt{d}}{R-T\sqrt{d}}.
    \]
\end{proposition}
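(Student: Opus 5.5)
Split $\F^s_p(u)=[u]^p_{s,p}|_{B_R}+[u]^p_{s,p}|_{\R^d\setminus B_R}$. Everything then reduces to bounding the tail term $[u]^p_{s,p}|_{\R^d\setminus B_R}$ from above and below by a multiple of $\F^0_p(u)$. To do this, tile $\R^d$ by the translated cells $Q_T+Tk$, $k\in\Z^d$. Since $u$ is $T$-periodic, the substitution $y=z+Tk$ with $z\in Q_T$ gives
\[
[u]^p_{s,p}|_{\R^d\setminus B_R}=\sum_{k\in\Z^d}\int_{Q_T}\int_{Q_T}\frac{|u(x)-u(z)|^p}{|x-z-Tk|^{d+sp}}\,\rchi_{\{|z+Tk|\ge R\}}\,\ud z\,\ud x .
\]

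\textbf{Step 1: freeze the kernel on each cell.} For $x,z\in Q_T$ we have $|x-z|\le T\sqrt d$ and $|z|\le T\sqrt d$. Hence
\[
T|k|-2T\sqrt d\le |z+Tk|-T\sqrt d\le|x-z-Tk|\le |z+Tk|+T\sqrt d\le T|k|+2T\sqrt d .
\]
This brackets the kernel on each cell uniformly in $x,z$. The truncation $\rchi_{\{|z+Tk|\ge R\}}$ is handled the same way: every $k$ with $T|k|\ge R+T\sqrt d$ contributes a full cell, every $k$ with $T|k|<R-T\sqrt d$ contributes nothing, and the intermediate cells can be included or dropped according to which bound we want. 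Using $\rchi\le1$ for the upper bound and keeping only full cells for the lower bound, each surviving term is comparable to $(T|k|\pm2T\sqrt d)^{-d-sp}\,\F^0_p(u)$. This gives
\[
\F^0_p(u)\sum_{T|k|\ge R+T\sqrt d}\frac{1}{(T|k|+2T\sqrt d)^{d+sp}}\;\le\;[u]^p_{s,p}|_{\R^d\setminus B_R}\;\le\;\F^0_p(u)\sum_{T|k|\ge R-T\sqrt d}\frac{1}{(T|k|-2T\sqrt d)^{d+sp}} .
\]

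\textbf{Step 2: compare the lattice sums with integrals.} Each unit cube $k+[0,1)^d$ is contained in the ball $B_{\sqrt d}(k)$. So on that cube, $|k|$ and a continuous variable $|\xi|$ differ by at most $\sqrt d$, and each sum is comparable to an integral in polar coordinates, $d\omega_d\int r^{d-1}(\cdot)\,\ud r$ with the paper's normalisation. The ratio constants $c_1,c_2$ come from comparing $T|k|\pm 2T\sqrt d$ with $|\xi|T$ on the range $|\xi|\gtrsim R/T$. For $r\ge R/T$ one has
\[
\frac{r+2\sqrt d}{r}\le\frac{R+2T\sqrt d}{R+T\sqrt d}\cdot(\text{shift}),
\]
which should be monotone in $r$ and therefore maximised or minimised at the endpoint. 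This lets us write $(T|k|+2T\sqrt d)\le Tc_1\,(\cdot)$, and symmetrically with $c_2$ from below (using $R>2T\sqrt d$ so that $c_2>0$). The remaining radial integral is
\[
\int_{R/T\pm2\sqrt d}^\infty r^{-1-sp}\,\ud r=\frac{(R/T\pm2\sqrt d)^{-sp}}{sp},
\]
which yields exactly the prefactor $d\omega_d\,(R/T\pm2\sqrt d)^{-sp}/\bigl(sp\,(Tc_i)^{d+sp}\bigr)$.

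\textbf{Main obstacle.} The delicate step is Step 2. The bookkeeping of shifts must match the stated constants $c_1,c_2$ and the endpoints $R/T\pm2\sqrt d$ exactly, with the discrete-to-continuous comparison going in the right direction in each inequality. I would first prove the two inequalities with some explicit constants, verifying that the cube-to-ball inclusions and the monotonicity of $r\mapsto (r+a)/(r+b)$ produce precisely $c_1$ and $c_2$. Only afterwards would I adjust which cells are kept or dropped near $\partial B_R$ to land on the stated endpoints.
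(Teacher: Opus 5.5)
Your architecture is the paper's: split off $[u]^p_{s,p}|_{B_R}$, tile by $Q_T+Tk$, use periodicity to reduce each cell to an integral over $Q_T\times Q_T$, keep full cells for the lower bound and all cells meeting $\R^d\setminus B_R$ for the upper bound, then compare with a radial integral. However, Step 1 is too lossy to reach the stated constants. By passing through $|z+Tk|$ you pay the cell diameter twice and obtain $T|k|\pm 2T\sqrt d$. The paper instead uses $x-z\in[-T,T]^d$ directly, which gives $T(|k|-\sqrt d)\le |x-z-Tk|\le T(|k|+\sqrt d)$. The loss is not cosmetic. For $2T\sqrt d<R\le 3T\sqrt d$ the lattice point $k=(2,\dots,2)$ satisfies $T|k|=2T\sqrt d\ge R-T\sqrt d$, so it is retained in your upper sum, but $T|k|-2T\sqrt d=0$. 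Your upper bound is therefore $+\infty$ on part of the admissible range, where the statement gives a finite bound. For larger $R$, the ratio $(|k|\pm2\sqrt d)/|k|$ on your index sets yields at best $\frac{R+3T\sqrt d}{R+T\sqrt d}>c_1$ and $\frac{R-3T\sqrt d}{R-T\sqrt d}<c_2$, i.e.\ strictly weaker inequalities. The unspecified ``(shift)'' in Step 2 is exactly where this discrepancy is hidden, and nothing in your Step 2 recovers it.

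The missing idea is that $c_1,c_2$ do not come from the discrete-to-continuous comparison at all. With the sharp bracket, they come from an elementary pointwise bound on the index sets, applied before any integral appears. For $|k|>R/T+\sqrt d$ one has $|k|+\sqrt d=|k|\bigl(1+\sqrt d/|k|\bigr)\le c_1|k|$. For $|k|>R/T-\sqrt d$ one has $|k|-\sqrt d\ge c_2|k|$, with $c_2>0$ because $R>2T\sqrt d$. Monotonicity of $r\mapsto 1\pm\sqrt d/r$ puts the worst case at the endpoint of each index set. This reduces both tails to $\F^0_p(u)\,(Tc_i)^{-d-sp}\sum|k|^{-d-sp}$. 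Only then is the pure lattice sum compared with $\int_{\R^d\setminus B_{R/T\pm 2\sqrt d}}|\xi|^{-d-sp}\,\ud\xi$, which gives the radial factor $\frac{d\omega_d}{sp}(R/T\pm2\sqrt d)^{-sp}$ of the statement. This second loss of $\sqrt d$, the diameter of a unit cube, is the origin of the endpoints $R/T\pm 2\sqrt d$. So the two $\sqrt d$ shifts must be kept separate, one in the kernel and one in the lattice-to-integral step, rather than merged into $2\sqrt d$ in the kernel.
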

\begin{proof}
	We define the partition of \(\R^d\) given by cubes as
	\[
	\R^d = \bigcup_{I\in\Z^d} Q_I,
	\]
	where each \(Q_I\coloneqq (TI+Q_T)\). We can write the energy as the sum over the cubes \(Q_I\)
	\begin{align*}
		\F_p^s(u) =& \int_{Q_T}\int_{\R^d} \frac{|u(x)-u(y)|^p}{|x-y|^{d+sp}} \, \ud y \ud x \\
		=& \sum_{I\in\Z^d}\int_{Q_T}\int_{Q_I} \frac{|u(x)-u(y)|^p}{|x-y|^{d+sp}} \, \ud y \ud x \\
		=& \sum_{I\in\Z^d}\int_{Q_T}\int_{Q_T} \frac{|u(x)-u(y)|^p}{|x-y-TI|^{d+sp}} \, \ud y \ud x,
	\end{align*}
	where in the last step we apply the change of variables given by \(y\mapsto y+TI\) and use the periodicity of \(u\). We write the energy as the sum
	\[
	\F_p^s(u) =  \sum_{I\in\Z^d} f_I(u),
	\]
	with
	\[
	f_I(u) \coloneqq \int_{Q_T}\int_{Q_T} \frac{|u(x)-u(y)|^p}{|x-y-TI|^{d+sp}} \, \ud y \ud x.
	\]
	Let \(R>2T\sqrt{d}\), we observe that for \(W^{s,p}(\T)\) functions \([u]^p_{s,p}|_{B_R}\) is finite by definition. The other component of the energy, which we refer to as the \emph{tail}, is controlled from above and below by 
	\begin{equation} \label{eq: stima code dall'alto e dal basso}
	 \sum_{\substack{I\in\Z^d \\ |I|>\frac{R}{T}+\sqrt{d}}} f_I(u)\leq [u]^p_{s,p}|_{B_R^c}\leq \sum_{\substack{I\in\Z^d \\ |I|>\frac{R}{T}-\sqrt{d}}} f_I(u).
    \end{equation}
\begin{figure}
	\centering 
	\makebox[\textwidth]{
	\scalebox{.8}{
	\begin{tikzpicture}
        \draw [fill=black!5](270:6.1)--(0,0)--(180:6.1) arc (180:270:6.1);
		\draw [fill=black!12](270:4.7)--(0,0)--(180:4.7) arc (180:270:4.7);
		\draw [fill=black!20](270:3.3)--(0,0)--(180:3.3) arc (180:270:3.3);
		\node at (-1,0) [above=1]{\(T\)};
		\node at (-6.1,0) [above]{\(R\!+\!T\sqrt{d}\)};
        \node at (-3.3,0) [above]{\(R\!-\!T\sqrt{d}\)};
		\node at (-4.7,0) [above=1] {\(R\)};
		
		\draw[step=1, black] (0,0) grid (-7,-7);
		
		\draw[thick,<-] (1,0)--(-8,0);
		\draw[thick,<-] (0,1)--(0,-8);
		
		\draw[very thick] (0,-3) rectangle (-1,-4) ;
		\draw[arrows={-Stealth}] (0,0) -- (-1,-4) node[above=-2, pos=.6, sloped]{\(TI_1\)};

		\draw[very thick] (-4,-1) rectangle (-5,-2) ;
		\draw[arrows={-Stealth}] (0,0) -- (-5,-2) node[above=-2, pos=.65, sloped]{\(TI_2\)};

        \draw[very thick] (-4,-4) rectangle (-5,-5) ;
		\draw[arrows={-Stealth}] (0,0) -- (-5,-5) node[above=-2, pos=.71, sloped]{\(TI_3\)};
	\end{tikzpicture}
}
\hspace{0cm}
\scalebox{.8}{
		\begin{tikzpicture} 
        \draw [fill=black!5](90:6.1)--(0,0)--(0:6.1) arc (0:90:6.1);
		\draw [fill=black!12](90:4.7)--(0,0)--(0:4.7) arc (0:90:4.7);
		\draw [fill=black!20](90:3.3)--(0,0)--(0:3.3) arc (0:90:3.3);
		\node at (1,0) [below=2.5]{\(T\)};
		\node at (4.7,0) [below=2.5]{\(R\)};
		\node at (3.3,0) [below] {\(R\!-\!T\sqrt{d}\)};
        \node at (6.1,0) [below] {\(R\!+\!T\sqrt{d}\)};
		
		\draw[step=1, black] (0,0) grid (7,7);
		
		\draw[thick,->] (-1,0)--(8,0);
		\draw[thick,->] (0,-1)--(0,8);
		
		\draw[very thick] (0,0) node[right=.5cm, above=.2cm]{\(Q_T\)} rectangle (1,1) ;
		
		\draw[very thick] (1,4) rectangle (2,5) ;
		\draw[arrows={-Stealth}] (0,0) -- (1,4) node[above=-2, pos=.8, sloped]{\(TI_1\)};

		\draw[very thick] (5,2) rectangle (6,3) ;
		\draw[arrows={-Stealth}] (0,0) -- (5,2) node[above=-2, pos=.71, sloped]{\(TI_2\)};

        \draw[very thick] (5,5) rectangle (6,6) ;
		\draw[arrows={-Stealth}] (0,0) -- (5,5) node[above=-2, pos=.8, sloped]{\(TI_3\)};
	\end{tikzpicture}
}
}
    \caption{The key observation is that a vector \(I \in \Z^d\) always points to the corner with smallest components of the hypercube \(Q_I\). This means that a hypercube in a sector with any negative coordinate is \virgolette{closer} to the origin than a hypercube in the sector with all positive coordinates having a vector of the same norm. We need to consider this fact to not over/under-estimate \([u]_{s,p}|_{B_R^c}\). In the figure above the vectors on the left are the reflections, centred at the origin, of those on the right.}
\label{fig: stima sui quadratini 1}
\end{figure}
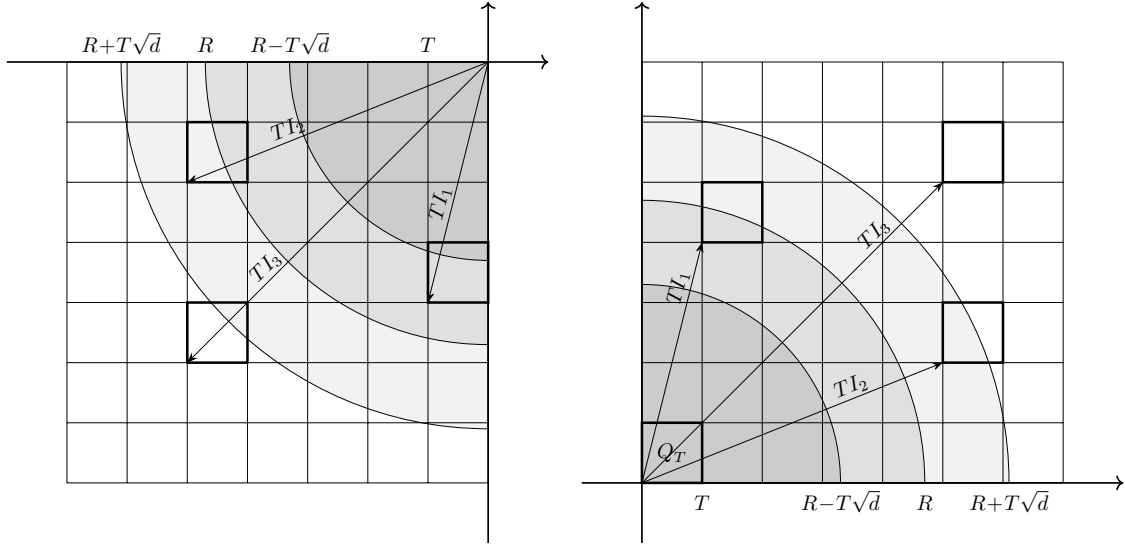 %
Also note that we can write the following estimates:
\begin{equation} \label{eq: stime su |x-y-TI|}
	(|I|-\sqrt{d})T \leq |x-y- TI| \leq (|I|+\sqrt{d})T, \qquad \forall I\in\Z^d; \, x,y\in Q_T,
\end{equation}
which are non-trivial for \(|I|> \sqrt{d}\).
As a consequence, we get
\begin{align} \label{eq: stime somme |I| dal basso}
	\sum_{\substack{I\in\Z^d \\ |I|>\frac{R}{T}+\sqrt{d}}} f_I(u) & \geq \sum_{\substack{I\in\Z^d \\ \nonumber |I|>\frac{R}{T}+\sqrt{d}}}\int_{Q_T}\int_{Q_T} \frac{|u(x)-u(y)|^p}{(|I|+\sqrt{d})^{d+sp}T^{d+sp}} \, \ud y \ud x \\\nonumber
	&= \frac{\F_p^0(u)}{T^{d+sp}} \sum_{\substack{I\in\Z^d \\ |I|>\frac{R}{T}+\sqrt{d}}} \frac{1}{(|I|+\sqrt{d})^{d+sp}} \\ \nonumber
	& \geq \frac{\F_p^0(u)}{T^{d+sp}} \sum_{\substack{I\in\Z^d \\ |I|>\frac{R}{T}+\sqrt{d}}} \frac{1}{|I|^{d+sp}\left(1+\frac{\sqrt{d}}{\frac{R}{T}+\sqrt{d}}\right)^{d+sp}} \\
	&= \frac{\F_p^0(u)}{(Tc_1(R,d))^{d+sp}} \sum_{\substack{I\in\Z^d \\ |I|>\frac{R}{T}+\sqrt{d}}} \frac{1}{|I|^{d+sp}},
\end{align}
where in the first inequality we used the upper bound in \cref{eq: stime su |x-y-TI|}, and we have set  
\[ 
c_1(R,d)\coloneqq\frac{R+2T\sqrt{d}}{R+T\sqrt{d}}.
\]
Analogously, by the lower bound in \cref{eq: stime su |x-y-TI|}, we have
\begin{align} \label{eq: stime somme |I| dall'alto}
	\sum_{\substack{I\in\Z^d \\ |I|>\frac{R}{T}-\sqrt{d}}} f_I(u) &\leq  \sum_{\substack{I\in\Z^d \\ |I|>\frac{R}{T}-\sqrt{d}}} \int_{Q_T}\int_{Q_T} \nonumber \frac{|u(x)-u(y)|^p}{(|I|-\sqrt{d})^{d+sp}T^{d+sp}} \, \ud y \ud x \\ \nonumber
	&= \frac{\F_p^0(u)}{T^{d+sp}} \sum_{\substack{I\in\Z^d \\ |I|>\frac{R}{T}-\sqrt{d}}} \frac{1}{(|I|-\sqrt{d})^{d+sp}} \\ \nonumber
	& \leq \frac{\F_p^0(u)}{T^{d+sp}} \sum_{\substack{I\in\Z^d \\  |I|>\frac{R}{T}-\sqrt{d}}} \frac{1}{|I|^{d+sp}\left(1-\frac{\sqrt{d}}{\frac{R}{T}-\sqrt{d}}\right)^{d+sp}} \\ 
	&= \frac{\F_p^0(u)}{(Tc_2(R,d))^{d+sp}} \sum_{\substack{I\in\Z^d \\ |I|>\frac{R}{T}-\sqrt{d}}} \frac{1}{|I|^{d+sp}},
\end{align}
	where we have set 
    \[
    c_2(R,d)\coloneqq\frac{R-2T\sqrt{d}}{R-T\sqrt{d}}.
    \]
	We now observe that we can control from below the sum in \cref{eq: stime somme |I| dal basso} with 
	\begin{equation*}
		\sum_{\substack{I\in\Z^d \\ |I|>\frac{R}{T}+\sqrt{d}}} \frac{1}{|I|^{d+sp}} \geq \int_{B_{\frac{R}{T}+2\sqrt{d}}^c} \frac{1}{|x|^{d+sp}}\,\ud x = d\omega_d \int_{\frac{R}{T}+2\sqrt{d}}^{+\infty} \frac{1}{\rho^{1+sp}}\, \ud \rho = \frac{d\omega_d}{sp\left(\frac{R}{T}+2\sqrt{d}\right)^{sp}}
	\end{equation*}
	and from above the one in \cref{eq: stime somme |I| dall'alto} with 
	\begin{equation*}
		\sum_{\substack{I\in\Z^d \\ |I|>\frac{R}{T}-\sqrt{d}}} \frac{1}{|I|^{d+sp}} \leq \int_{B_{\frac{R}{T}-2\sqrt{d}}^c} \frac{1}{|x|^{d+sp}}\,\ud x = d\omega_d \int_{\frac{R}{T}-2\sqrt{d}}^{+\infty} \frac{1}{\rho^{1+sp}}\, \ud \rho = \frac{d\omega_d}{sp\left(\frac{R}{T}-2\sqrt{d}\right)^{sp}}.
	\end{equation*}
	By \cref{eq: stima code dall'alto e dal basso,eq: stime somme |I| dal basso,eq: stime somme |I| dall'alto}, we obtain the claim.

\end{proof}

\section{The \(\Gamma\)-convergence results}
\label{sez: gamma convergenza}

\subsection{Pointwise convergence and \(\Gamma\)-convergence for \(s\to0^+\)}
\begin{proposition}[Pointwise convergence for \(s\to0^+\)] \label{prop: puntuale a 0}
    Let \(p\geq 1\), \((s_n)_{n \in \N} \subset (0,1)\), such that \(s_n \to 0^+\) as \(n \to +\infty\). Let \(u\in W^{s_1,p}(\T)\), then it holds true
    \[
    \lim_{\substack{n \to +\infty}} s_n \F^{s_n}_p(u) = \frac{d \omega_d}{pT^d}\F^0_p(u).
    \]
\end{proposition}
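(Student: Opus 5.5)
We apply the energy estimates of \Cref{prop: energy estimates} with $s=s_n$ and a fixed radius $R>2T\sqrt{d}$, multiply by $s_n$, and let first $n\to+\infty$ and then $R\to+\infty$. Multiplying the two-sided estimate by $s_n$ gives
\[
s_n[u]^p_{s_n,p}|_{B_R}+\frac{d\omega_d\left(\frac{R}{T}+2\sqrt{d}\right)^{-s_np}}{p\,(Tc_1(R,d))^{d+s_np}}\F^0_p(u)\le s_n\F^{s_n}_p(u)\le s_n[u]^p_{s_n,p}|_{B_R}+\frac{d\omega_d\left(\frac{R}{T}-2\sqrt{d}\right)^{-s_np}}{p\,(Tc_2(R,d))^{d+s_np}}\F^0_p(u).
\]
The tail prefactors converge as $n\to+\infty$ to $\frac{d\omega_d}{p\,T^d c_i(R,d)^d}$, $i=1,2$. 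Since $c_1(R,d),c_2(R,d)\to1$ as $R\to+\infty$, these tend to $\frac{d\omega_d}{pT^d}$. Note that $\F^0_p(u)\le 2^{p}T^d\|u\|^p_{L^p(\T)}<+\infty$, so this part is harmless.

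The main step is to show that, for each fixed $R$, the core term vanishes:
\[
\lim_{n\to+\infty}s_n[u]^p_{s_n,p}|_{B_R}=0.
\]
Split $B_R$ (more precisely, the set of $y$ with $|x-y|<R$, after changing to the variable $h=y-x$) into $\{|x-y|<1\}$ and $\{1\le|x-y|<R\}$.

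On the far part the kernel is bounded by $1$ uniformly in $s$. Covering $B_R$ by finitely many translated cells and using periodicity, this part is at most $C(R,d,T)\,\|u\|^p_{L^p(\T)}$. Multiplied by $s_n$, it tends to $0$.

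On the near part, for $|x-y|<1$ and $s_n\le s_1$ we have $|x-y|^{-(d+s_np)}\le|x-y|^{-(d+s_1p)}$. Hence this piece is bounded by the corresponding cut-off of $[u]^p_{s_1,p}$, which is finite because $u\in W^{s_1,p}(\T)$. Multiplied by $s_n$, it also tends to $0$.

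The only subtlety is that the statement requires $s_n\le s_1$. If the sequence is not monotone, I would replace $s_1$ by $\sup_n s_n$ after discarding finitely many terms, or read the hypothesis as $u\in W^{\bar s,p}(\T)$ with $s_n\le\bar s$ for $n$ large. This is the main obstacle, and it is a bookkeeping issue rather than a real one. One must also justify that the cut-off is taken over $y\in B_R$ while $x\in Q_T$. For $R$ large and $x\in Q_T$, the set $B_R$ contains $\{|x-y|<1\}$ and is contained in $\{|x-y|<R+T\sqrt d\}$, so the split above applies.

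Combining the pieces, taking $\liminf$ and $\limsup$ in $n$ gives
\[
\frac{d\omega_d}{pT^dc_1^d}\F^0_p(u)\le\liminf_n s_n\F^{s_n}_p(u)\le\limsup_n s_n\F^{s_n}_p(u)\le\frac{d\omega_d}{pT^dc_2^d}\F^0_p(u).
\]
Letting $R\to+\infty$ yields the claim.
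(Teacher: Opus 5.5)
Your proof is correct and follows the paper's argument: you apply \Cref{prop: energy estimates} with $R$ fixed, multiply by $s_n$, let $n\to+\infty$, and then let $R\to+\infty$ using $c_1(R,d),c_2(R,d)\to1$. Your near/far splitting to prove $s_n[u]^p_{s_n,p}|_{B_R}\to0$ makes explicit a step the paper uses without comment. The worry about $s_n\le s_1$ is vacuous, since $s_n\to0^+$ forces $s_n<s_1$ for all large $n$.
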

\begin{proof}
Firstly, we observe that the limit is well defined in view of \cref{rmk: inclusioni sobolev}. By the upper bound in \Cref{prop: energy estimates}, for every \(R>0\) we have
\begin{align*}
\limsup_{\substack{n \to +\infty}} s_n \F^{s_n}_p(u) &\leq \limsup_{\substack{n \to +\infty}} \left( s_n[u]_{s_n,p}^p|_{B_R} + \frac{d\omega_d \left(\frac{R}
{T}-2\sqrt{d}\right)^{-s_n p}}{p(Tc_2(R,d))^{d+s_n p}}\F_p^0(u)\right)\\
&=\frac{d\omega_d}{p(Tc_2(R,d))^d} \F^0_p(u).
\end{align*}
Analogously, by using the lower bound in \Cref{prop: energy estimates}, for every \(R>0\) we have 
\begin{align*}
\liminf_{\substack{n \to +\infty}} s_n \F^{s_n}_p(u) &\geq \liminf_{\substack{n \to +\infty}} \left( s_n[u]_{s_n,p}^p|_{B_R} + \frac{d\omega_d \left(\frac{R}{T}+2\sqrt{d}\right)^{-s_n p}}{p(Tc_1(R,d))^{d+s_np}}\F_p^0(u) \right)\\
&=\frac{d\omega_d}{p(Tc_1(R,d))^d} \F^0_p(u).
\end{align*}
Combining the two inequalities, we get the claim sending \(R \to +\infty\).

\end{proof}
In the next theorem we show that \(\F^0_p\) is actually the \(\Gamma\)-limit of \(s\F^s_p\).

\begin{theorem}[\(\Gamma\)-convergence for \(s\to 0^+\)] \label{thm: G-conv per s a 0}
	Let \(p\in[1,+\infty)\) and \((s_n)_n \subset (0,1)\) such that \(s_n \to 0^+\) as \(n \rightarrow +\infty\).

		\emph{1. Compactness.} Assume \(p\neq1\) and let \((u_n)_n\subset L^p(\T)\) be such that there exists a constant \(C \in \R\) such that
        \begin{equation} \label{eq: Hyp for compactness}
            \sup_{\substack{n\in\N}} s_n \F_p^{s_n} \left({\textstyle u_n}\right) \leq C,
        \end{equation}
		then, without relabelling subsequences, \(u_n-\fint u_n \rightharpoonup u\) in \(L^p(\T)\) for some \(u \in L^p(\T)\), and \(\fint u =0\).
		
		\emph{2. \(\Gamma\)-\(\liminf\) inequality.} For all \(u \in  L^p(\T)\) and all \((u_n)_n\subset L^p(\T)\) such that \(u_n \rightharpoonup u\) in \( L^p(\T)\) the following holds:
		\[
		\liminf_{n\rightarrow +\infty} s_n \F_p^{s_n}(u_n) \geq \frac{d\omega_d}{pT^d}\F_p^0(u).
		\]
        
		\emph{3. \(\Gamma\)-\(\limsup\) inequality.} For all \(u \in L^p(\T)\) there exists \((u_n)_n \subset L^p(\T)\) such that \(u_n \rightharpoonup u \) in \( L^p(\T)\) and 
		\[
		\limsup_{n\rightarrow +\infty} s_n \F_p^{s_n}(u_n) \leq \frac{d\omega_d}{pT^d}\F_p^0(u).
		\]
\end{theorem}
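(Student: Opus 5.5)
All three parts will be derived from \Cref{prop: energy estimates}, in which the tail of $\F_p^s$ is compared with $\F_p^0$.

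\textbf{Compactness.} Fix $R>2T\sqrt d$ and drop the nonnegative core term $[u_n]^p_{s_n,p}|_{B_R}$ in the lower bound of \Cref{prop: energy estimates}. This gives
\[
s_n\F_p^{s_n}(u_n)\ \ge\ \frac{d\omega_d\left(\frac RT+2\sqrt d\right)^{-s_np}}{p\,(Tc_1(R,d))^{d+s_np}}\,\F_p^0(u_n).
\]
As $s_n\to0^+$, the prefactor converges to $d\omega_d/(p(Tc_1)^d)>0$, so it is bounded below uniformly in $n$. Hence \cref{eq: Hyp for compactness} yields $\sup_n\F_p^0(u_n)<+\infty$.

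Set $v_n:=u_n-\fint u_n$. By Jensen,
\[
\int_{Q_T}|v_n(x)|^p\,\ud x=\int_{Q_T}\Big|\fint_{Q_T}(u_n(x)-u_n(y))\,\ud y\Big|^p\ud x\le T^{-d}\F_p^0(u_n),
\]
so $(v_n)$ is bounded in $L^p(\T)$. Since $p>1$, the space is reflexive and a subsequence converges weakly, $v_n\rightharpoonup u$. Testing against the constant function $1$ gives $\fint u=0$.

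\textbf{$\Gamma$-$\liminf$.} Take $u_n\rightharpoonup u$ and assume without loss of generality that the $\liminf$ is a finite limit. By the same lower bound with $R$ fixed,
\[
\liminf_n s_n\F_p^{s_n}(u_n)\ \ge\ \frac{d\omega_d}{p(Tc_1(R,d))^d}\,\liminf_n\F_p^0(u_n).
\]
It remains to show that $\F_p^0$ is weakly lower semicontinuous on $L^p(\T)$. The integrand $(a,b)\mapsto|a-b|^p$ is convex, so $\F_p^0$ is a convex functional. It is also strongly continuous, because $\|u(x)-u(y)\|_{L^p(Q_T^2)}\le 2T^{d/p}\|u\|_{L^p}$. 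A convex, strongly continuous functional is weakly lower semicontinuous by Mazur's lemma, and this holds also for $p=1$.

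Equivalently, one can view the map $u\mapsto u(x)-u(y)$ as a bounded linear operator $L^p(Q_T)\to L^p(Q_T^2)$. It preserves weak convergence, and the $L^p$ norm on the target is weakly lower semicontinuous. This matches the paper's hint of using Fatou and weak lower semicontinuity of the $L^p$ norm. Letting $R\to\infty$, so that $c_1\to1$, gives the claimed inequality.

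\textbf{$\Gamma$-$\limsup$.} For $u\in W^{s_1,p}(\T)$, and in particular for $u\in C^\infty(\T)$, take the constant sequence $u_n=u$; \Cref{prop: puntuale a 0} gives the inequality directly. For general $u\in L^p(\T)$, choose $u^k\in C^\infty(\T)$ with $u^k\to u$ strongly in $L^p$, for instance by mollification. Then $\F_p^0(u^k)\to\F_p^0(u)$ by strong continuity. The standard diagonal argument, using lower semicontinuity of the $\Gamma$-$\limsup$ with respect to the (weak) convergence, then produces a recovery sequence. Concretely, pick $n_k$ increasing such that $s_n\F_p^{s_n}(u^k)\le \frac{d\omega_d}{pT^d}\F_p^0(u^k)+1/k$ for all $n\ge n_k$. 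Set $u_n:=u^k$ for $n_k\le n<n_{k+1}$. This sequence converges strongly, hence weakly, to $u$ and satisfies the required bound.

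\textbf{Main obstacle.} The only delicate points are the uniformity of the constants in \Cref{prop: energy estimates} as $s_n\to0$ and the weak lower semicontinuity of $\F_p^0$; both are handled by the convexity and continuity argument above. The rest is bookkeeping, with $R\to\infty$ taken only after $n\to\infty$.
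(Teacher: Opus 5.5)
Your proposal is correct and follows essentially the same route as the paper. Compactness comes from the lower bound of \Cref{prop: energy estimates} together with Jensen and reflexivity. The $\Gamma$-$\liminf$ uses the same lower bound plus weak lower semicontinuity of $\F_p^0$. The $\Gamma$-$\limsup$ uses constant recovery sequences on $C^\infty(\T)$ followed by a density/diagonal argument.

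The differences are only cosmetic. You get weak lower semicontinuity of $\F_p^0$ directly from the bounded linear map $u\mapsto u(x)-u(y)$ into $L^p(Q_T^2)$, where the paper applies Fatou over the translations $\tau_h u_n-u_n$. You also write out the diagonal construction that the paper delegates to Braides.
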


\begin{proof}\strut 

	\emph{1. Compactness.} To prove the first point, we can directly apply the lower bound from \Cref{prop: energy estimates}, obtaining that for every \(R>2T\sqrt{d}\)
\begin{align*}
	s_n\F_p^{s_n}(u_n) \geq& s_n[u_n]^p_{s_n,p}|_{B_R}+ \frac{d\omega_d \left(\frac{R}{T}+2\sqrt{d}\right)^{-s_np}\F_p^0(u_n)}{p(Tc_1(R,d))^{d+s_np}}\\
	\geq & \F_p^0(u_n)\inf_{n\in\N} \left(\frac{d\omega_d \left(\frac{R}{T}+2\sqrt{d}\right)^{-s_np}}{p(Tc_1(R,d))^{d+s_np}}\right)\\
	=& C(p,d,R,s_1,T) \F_p^0(u_n).
\end{align*}
	By Jensen inequality
	\begin{align*}
		\F_p^0(u_n) &=\int_{Q_T}\int_{Q_T} |u_n(x)-u_n(y)|^p  \,\ud y \ud x \\
		&\geq T^{d-dp}\int_{Q_T} \left| \int_{Q_T} u_n(x)-u_n(y) \, \ud y \right|^p \ud x \\
		&= T^d\|u_n - {\textstyle\fint} u_n \|_{L^p(\T)}.
	\end{align*} 
	It follows from \cref{eq: Hyp for compactness} that the \(L^p\) norm of the sequence \(\left(u_n-\fint u_n\right)_n\) is bounded uniformly in \(n\). Since \(L^p\), \(p\in(1,+\infty)\), is a reflexive space, weak compactness follows.
	
	\emph{2. \(\Gamma\)-liminf inequality.} Using once again the lower bound in \Cref{prop: energy estimates} we obtain
	\begin{align*} 
		\liminf_{n \rightarrow +\infty} s_n \F_p^{s_n}(u_n) 
        &\geq
		\liminf_{n \rightarrow +\infty} \left( s_n[u_n]^p_{s_n,p}|_{B_R}+ \frac{d\omega_d \left(\frac{R}{T}+2\sqrt{d}\right)^{-s_n p}}{p(Tc_1(R,d))^{d+s_n p}}\F_p^0(u_n)\right) \\
		&\geq \frac{d \omega_d}{p}\liminf_{n \rightarrow +\infty} \F_p^0(u_n) \liminf_{n\to +\infty}\frac{\left(\frac{R}{T}+2\sqrt{d}\right)^{-s_n p}}{T^{d+s_n p}c_1(R,d)^{d+s_n p}}\\
		&=\frac{d \omega_d}{pT^d c_1(R,d)^d} \liminf_{n \rightarrow +\infty} \F_p^0(u_n) \xrightarrow{R\to +\infty} \frac{d \omega_d}{pT^d} \liminf_{n \rightarrow +\infty} \F_p^0(u_n),
	\end{align*}
	where we recall that \(c_1(R,d)\to1\), for \(R\to +\infty\).
    Now, setting \(\tau_h u(x):=u(x+h)\) for \(x,h\in\R^d\), using periodicity, we can write
	\begin{flalign*}
		\liminf_{n \to +\infty} \F_p^0(u_n) &= \liminf_{n \to +\infty} \int_{Q_T}\int_{Q_T} |u_n(x)-u_n(y)|^p \ud y\ud x \\
        &= \liminf_{n \to +\infty} \int_{Q_T}\int_{Q_T} |\tau_h u_n(y)-u_n(y)|^p \ud y\ud h \\
		&\geq \int_{Q_T}  \liminf_{n \to +\infty} \|\tau_h u_n-u_n \|_{L^p(\T)}^p \ud h\\
		&\geq \int_{Q_T} \|\tau_hu-u\|_{L^p(\T)}^p \ud h = \F_p^0(u).
	\end{flalign*}
    In the first inequality we used Fatou's Lemma, in the second one we used the weak lower semicontinuity of the \(L^p\) norm and the weak continuity of the operator \(\tau_h-\mathrm{Id}\). This concludes the proof for the \(\Gamma\)-\(\liminf\) inequality. 
	
	\emph{3. \(\Gamma\)-limsup inequality.} To prove this last result, we start by considering a function \(u \in C^\infty(\T)\), and the \emph{recovery sequence} given by \(u_n \equiv u\). We can now repeat the same computation we used for the pointwise convergence to show the \(\limsup\) inequality for this sequence. To prove the result for \(u\in  L^p(\T)\) we use a density argument as shown in \cite[remark~1.29]{Braides02}.
	
\end{proof}

\subsection{Pointwise convergence and \(\Gamma\)-convergence for \(s\to1^-\)}

\begin{proposition}[Pointwise convergence for \(s\to1^-\)] \label{prop: puntuale a 1}
    Let \(p\geq 1\), \((s_n)_{n \in \N} \subset (0,1)\), such that \(s_n \to 1^-\) as \(n \to +\infty\). Let \(u\in C^\infty(\T)\), then it holds true
    \[
    \lim_{\substack{n \to +\infty}} (1-s_n) \F^{s_n}_p(u) = \F^1_p(u) 
    \coloneqq\frac{2\pi^{\frac{d-1}{2}}\Gamma\left(\frac{p+1}{2}\right)}{p\Gamma\left(\frac{d+p}{2}\right)}\|\nabla u\|_{L^p(\T)}^p \eqqcolon K_{d,p}\|\nabla u\|_{L^p(\T)}^p.
    \] 
\end{proposition}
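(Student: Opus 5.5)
Fix $u\in C^\infty(\T)$ and write, by the change of variables $y=x+h$,
\[
\F^{s}_p(u)=\int_{Q_T}\int_{\R^d}\frac{|u(x+h)-u(x)|^p}{|h|^{d+sp}}\,\ud h\,\ud x .
\]
I will split the inner integral into a near part $|h|<\delta$ and a far part $|h|\ge\delta$, for a fixed $\delta\in(0,1)$. On the far part, $|u(x+h)-u(x)|^p\le 2^p\|u\|_\infty^p$, and $\int_{|h|\ge\delta}|h|^{-d-sp}\,\ud h=d\omega_d\,\delta^{-sp}/(sp)$ stays bounded as $s\to1^-$. Multiplying by $(1-s_n)$, this contribution tends to $0$. (Alternatively one can use the upper bound of \Cref{prop: energy estimates} with large $R$ for the tail, but boundedness of $u$ already suffices here.) So only the near part matters.

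For the near part I Taylor expand: $u(x+h)-u(x)=\nabla u(x)\cdot h+r(x,h)$ with $|r(x,h)|\le \|D^2u\|_\infty|h|^2/2$, uniformly in $x$ thanks to smoothness and periodicity. I control $|a+b|^p-|a|^p$ by the elementary inequality
\[
\big||a+b|^p-|a|^p\big|\le p\,(|a|+|b|)^{p-1}|b| ,
\]
valid for $p\ge1$. With $a=\nabla u(x)\cdot h$ and $b=r$, the error is bounded by $C|h|^{p+1}$ for $|h|<\delta\le1$. The error integral is then $(1-s_n)\int_{|h|<\delta}C|h|^{p+1-d-s_np}\,\ud h=(1-s_n)\,C\,\delta^{1+p(1-s_n)}/(1+p(1-s_n))\to0$. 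It remains to compute the main term
\[
(1-s_n)\int_{Q_T}\int_{B_\delta}\frac{|\nabla u(x)\cdot h|^p}{|h|^{d+s_np}}\,\ud h\,\ud x .
\]
In polar coordinates $h=\rho\sigma$ this factorises as
\[
(1-s_n)\,\frac{\delta^{p(1-s_n)}}{p(1-s_n)}\int_{Q_T}\int_{\mathbb S^{d-1}}|\nabla u(x)\cdot\sigma|^p\,\ud\mathcal H^{d-1}(\sigma)\,\ud x ,
\]
and the prefactor converges to $\frac1p$.

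By rotation invariance, $\int_{\mathbb S^{d-1}}|\xi\cdot\sigma|^p\,\ud\sigma=|\xi|^p\int_{\mathbb S^{d-1}}|\sigma_1|^p\,\ud\sigma$. The main remaining step, which I expect to be the only real obstacle since it is a computation that must match the constant exactly, is to show
\[
\int_{\mathbb S^{d-1}}|\sigma_1|^p\,\ud\sigma=\frac{2\pi^{\frac{d-1}{2}}\Gamma\!\left(\frac{p+1}{2}\right)}{\Gamma\!\left(\frac{d+p}{2}\right)} .
\]
I would prove it with the Gaussian trick. Compute $\int_{\R^d}|x_1|^pe^{-|x|^2}\,\ud x$ in two ways. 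Via Fubini it equals $\Gamma\!\left(\frac{p+1}{2}\right)\pi^{\frac{d-1}{2}}$. In polar coordinates it equals $\frac12\Gamma\!\left(\frac{d+p}{2}\right)\int_{\mathbb S^{d-1}}|\sigma_1|^p\,\ud\sigma$. Equating the two expressions gives the formula. Multiplying by $\frac1p$ yields $K_{d,p}$, and we are done. This works for all $p\ge1$, including $p=1$, since only Lipschitz-type estimates on $|\cdot|^p$ were used.
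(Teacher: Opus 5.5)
Your proof is correct and is essentially the paper's argument. In both, a Taylor expansion near the diagonal gives a leading term that, in polar coordinates, equals exactly $\frac{\delta^{p(1-s_n)}}{p(1-s_n)}\int_{\mathbb{S}^{d-1}}|\sigma_1|^p\,\ud\sigma\,\|\nabla u\|^p_{L^p(\T)}$, while the Taylor remainder and all interactions at distance bounded below are $O(1)$ and are therefore killed by the factor $1-s_n$. You differ from the paper in two places: you bound the far field in one step with $\|u\|_\infty$, where the paper combines the tail estimate of \Cref{prop: energy estimates} with the $L^p$ bound on the annulus term $I_2$, and you derive the constant $K_{d,p}$ via the Gaussian integral, where the paper cites \cite[equation~(3.23)]{Kubin25}.
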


\begin{proof}
The idea follows from \cite[Theorem~3.6]{Kubin25}, where the same problem is addressed in the non-periodic setting, but with weighted seminorms. Firstly, we fix \(R>2T\sqrt{d}\) and we observe that, thanks to the estimates given by \Cref{prop: energy estimates}, we have 
\[
	(1-s)\F^s_p(u) \leq (1-s)[u]_{s,p}^p|_{B_R} + (1-s)\frac{d\omega_d \left(\frac{R}{T}-2\sqrt{d}\right)^{-sp}}{sp(Tc_2(R,d))^{d+sp}}\F_p^0(u).
\]
First, we observe that in such a case the second term goes to 0 as \(s\to 1^-\) and we can write the integral on the ball \(B_R\) as 
\begin{align*}
		[u]^p_{s,p}|_{B_R} =& \int_{Q_T}\int_{B_1(x)} \frac{|u(x)-u(y)|^p}{|x-y|^{d+sp}} \,\ud y \ud x \\
        &+ \int_{Q_T}\int_{B_R\setminus B_1(x)} \frac{|u(x)-u(y)|^p}{|x-y|^{d+sp}}\, \ud y \ud x\\
        \eqqcolon& I_1+I_2.
\end{align*}
To make sure \(I_2\) is well defined we want \(R\) large enough so that \(B_1(x)\subset B_R\) for all \(x\in Q_T\). To this end the condition \(R>2T\sqrt{d}\) given by \Cref{prop: energy estimates} is sufficient. To evaluate the first integral we write the Taylor expansion of \(u(y)\) around \(x\):
\[
	u(y) = u(x) +\nabla u(x) \cdot (x-y) +  O\left(|x-y|^2\right).
\]
Now, using the following estimate for \(a,b \in \R\) and \(p\geq 1\)
\begin{equation*}
    \left\lvert|a+b|^p-|a|^p\right\rvert\leq C_p (|b||a|^{p-1}+|b|^p),
\end{equation*}
for \(a\coloneqq\nabla u(x) \cdot (x-y)\) and \(b \coloneqq O(|x-y|^2)\), we get
\begin{equation} \label{eq: sviluppo alla p}
		|u(x)-u(y)|^p= |\nabla u(x)\cdot(x-y)|^p + 		O\left(|x-y|^{p+1}\right).
\end{equation}
Replacing this last equation in \(I_1\) we can write (setting \(\nu(x)\coloneqq\nabla u(x)/|\nabla u(x)|\))
\begin{align*}
	I_1 &= \int_{Q_T}\int_{B_1(x)} \frac{\left\lvert \nabla u(x) \cdot (x-y) \right \rvert^p}{|x-y|^{d+sp}}\, \ud y \ud x +C\int_{Q_T}\int_{B_1(x)} |x-y|^{(1-s)p+1-d} \ud y \ud x \\
	&\eqqcolon \int_{Q_T} |\nabla u(x)|^p \int_{B_1} \frac{|\nu(x)\cdot z|}{|z|^{d+sp}} \, \ud z \ud x + I_1' \\
	&= \frac{K_{d,p}}{1-s} \|\nabla u\|^p_{L^p(\T)} + I_1',
\end{align*}
where in the last equality we used the following identity as reported \cite[equation~(3.23)]{Kubin25}:
\begin{equation*}
	(1-s)\int_{B_1} \frac{|\nu\cdot z|}{|z|^{d+sp}}\, \ud z \eqqcolon K_{d,p}, \qquad \forall \nu\in \mathbb{S}^{d-1}.
\end{equation*}
 For \(I_1'\) we write
\[
	I_1' = C \int_{Q_T} \int_{B_1}|z|^{(1-s)p+1-d} \ud z \ud x 
= C |Q_T| d\omega_d\int_0^1 \rho^{(1-s)p} \ud \rho 
= \frac{C'}{p(1-s)+1}.
\]
Multiplying \(I_1'\) by \((1-s)\) and taking the limit as \(s\to1^-\) we get 0. Now we need to show that \((1-s)I_2\) goes to 0 as well:
\begin{align*}
	I_2 &=\int_{Q_T}\int_{B_R\setminus B_1(x)} \frac{|u(x)-u(y)|^p}{|x-y|^{d+sp}}\, \ud y \ud x \leq 2^p \int_{Q_T} \int_{B_R\setminus B_1(x)} \frac{|u(x)|^p+|u(y)|^p}{|x-y|^{d+sp}}\, \ud y \ud x \\
	&= 2^{p} \int_{Q_T}\int_{B_R\setminus B_1(x)} \frac{|u(x)|^p}{|x-y|^{d+sp}}\, \ud y \ud x +2^{p} \int_{B_R}\int_{Q_T\setminus B_1(y)} \frac{|u(y)|^p}{|x-y|^{d+sp}}\, \ud x \ud y \\
	& \leq 2^{p} \int_{Q_T} |u(x)|^p\int_{B_R\setminus B_1} \frac{1}{|y|^{d+sp}}\, \ud y \ud x + 2^p\int_{B_R} |u(y)|^p \int_{B_{T\sqrt{d}}(y)\setminus B_1(y)} \frac{1}{|x-y|^{d+sp}}\, \ud x \ud y \\
	& \leq 2^{p}d\omega_d \| u\|^p_{L^p(\T)} \int_1^R \frac{1}{\rho^{1+sp}} \, \ud \rho +  C 2^{p}d \omega_d  \| u\|^p_{L^p(\T)} \int_1^{T\sqrt{d}} \frac{1}{\rho^{1+sp}}\,\ud \rho \\
	&= \frac{d\omega_d 2^{p}}{sp} (1-R^{-sp})\| u\|^p_{L^p(\T)} +  \frac{C d\omega_d 2^{p}}{sp} \left(1-\bigl(T\sqrt{d}\bigr)^{-sp}\right) \| u\|^p_{L^p(\T)},
\end{align*} 
where \(C=C(R,T,d)\) counts the number of periodic cells intersecting the ball of fixed radius \(R\). As shown above \(I_2\) is bounded in \(s\) as \(s\to 1^-\), therefore \((1-s)I_2\to0\) as \(s\to 1^-\).

\end{proof}
Before stating the \(\Gamma\)-convergence Theorem, we introduce some preliminary results that we will use for compactness. Once again, we refer to \cite{Kubin25} for the proof of the results. To relieve the notation we will denote
\[
B_r(A)\coloneqq \bigcup_{x \in A} B_r(x),
\]
for any set \(A \subset \R^d\) and \(r>0\).

\begin{lemma} \label{lemma: stima kubin 1} For every \(v\in L^p(\T)\), \(h\in \R^d\) and \(0<\rho\leq |h|\) there exists \(C=C(p,d)>0\) such that
\begin{equation*}
    \|\tau_h v-v\|^p_{L^p(\T)} \leq C \frac{|h|^p}{\rho^{d+p}} \int_{B_\rho}\|\tau_y v-v\|^p_{L^p(B_{|h|}(Q_T))}\ud y.
\end{equation*}
\end{lemma}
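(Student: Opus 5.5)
The plan is to combine a telescoping decomposition of the shift $h$ into $N\sim |h|/\rho$ small steps with an averaging argument that replaces a single small shift by an average of shifts over $B_\rho$. Set $N\coloneqq\lceil 2|h|/\rho\rceil$ and $w\coloneqq h/N$, so that $|w|\le \rho/2$. Since $\rho\le|h|$, we also have $N\le 2|h|/\rho+1\le 3|h|/\rho$.

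\emph{Step 1 (telescoping).} We write
\[
\tau_h v-v=\sum_{k=0}^{N-1}\tau_{kw}\bigl(\tau_w v-v\bigr).
\]
Because $v$ is $T$-periodic, $\|\tau_{kw} g\|_{L^p(\T)}=\|g\|_{L^p(\T)}$ for every periodic $g$: a translate of the periodic cell is again a fundamental domain. Minkowski's inequality then gives
\[
\|\tau_h v-v\|_{L^p(\T)}\le N\|\tau_w v-v\|_{L^p(\T)},
\qquad\text{hence}\qquad
\|\tau_h v-v\|^p_{L^p(\T)}\le 3^p\frac{|h|^p}{\rho^p}\,\|\tau_w v-v\|^p_{L^p(\T)}.
\]

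\emph{Step 2 (averaging a small shift).} For every $y\in\R^d$ we have the pointwise identity $\tau_w(\tau_{y-w}v-v)=\tau_y v-\tau_w v$, which gives
\[
\tau_w v-v=(\tau_y v-v)-\tau_w(\tau_{y-w}v-v).
\]
We take $L^p(Q_T)$ norms and use $(a+b)^p\le 2^{p-1}(a^p+b^p)$. For the first term, $\|\tau_y v-v\|_{L^p(Q_T)}\le\|\tau_y v-v\|_{L^p(B_{|h|}(Q_T))}$ trivially. For the second term, $\|\tau_w(\tau_{y-w}v-v)\|_{L^p(Q_T)}=\|\tau_{y-w}v-v\|_{L^p(Q_T+w)}$, and $Q_T+w\subset B_{|h|}(Q_T)$ since $|w|\le|h|$.

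We then average over $y\in B_{\rho/2}(w)$. Since $|w|\le\rho/2$, both $y$ and $y-w$ range inside $B_\rho$, and the change of variables $y\mapsto y-w$ is measure preserving. This yields
\[
\|\tau_w v-v\|^p_{L^p(\T)}\le \frac{2^{p}}{|B_{\rho/2}|}\int_{B_\rho}\|\tau_y v-v\|^p_{L^p(B_{|h|}(Q_T))}\,\ud y .
\]
Here $|B_{\rho/2}|=c_d\rho^d$.

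\emph{Step 3 (conclusion).} Combining Steps 1 and 2 gives the claim with $C=3^p2^p/(c_d2^{-d})$, which depends only on $p$ and $d$.

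The only delicate point is the bookkeeping of domains. The translation invariance in Step 1 is used only for periodic functions on the full cell, where it is exact. In Step 2, by contrast, we must make sure that every shifted region lies inside $B_{|h|}(Q_T)$. This is exactly why $N$ is chosen so that $|w|\le\rho/2\le|h|$, and why the average is taken over $B_{\rho/2}(w)$ rather than over $B_\rho$ directly.
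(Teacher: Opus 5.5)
Your argument is correct, and it is essentially the standard chaining-plus-averaging proof that the paper relies on. The paper gives no proof of its own and defers to \cite{Kubin25} (cf.\ \cite{Crismale23}): split $h$ into $N\sim|h|/\rho$ steps of length at most $\rho/2$, then average the triangle inequality through an intermediate shift over a ball of radius comparable to $\rho$. In the periodic setting, your use of exact translation invariance on the cell even makes the enlargement to $B_{|h|}(Q_T)$ superfluous.

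The only blemish is notational. With $c_d=|B_1|$ one has $|B_{\rho/2}|=c_d2^{-d}\rho^d$, not $c_d\rho^d$ as you wrote; $c_d2^{-d}\rho^d$ is what your final constant $C=3^p2^p2^d/c_d$ actually uses.
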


\begin{lemma} \label{lemma: stime kubin 2}For every \(v\in L^p(\T)\), \(h\in \R^d\) there exists \(C=C(p,d)>0\) such that
\begin{equation*} 
    \|\tau_h v-v\|^p_{L^p(\T)} \leq C (1-s)|h|^{sp} \int_{B_{|h|}}\frac{\|\tau_y v-v\|^p_{L^p(B_{|h|}(Q_T))}}{|y
    |^{d+sp}}\,\ud y.
\end{equation*}
\end{lemma}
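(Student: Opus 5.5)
The plan is to derive \Cref{lemma: stime kubin 2} from \Cref{lemma: stima kubin 1} by averaging over the free parameter \(\rho\in(0,|h|]\) against a weight adapted to the kernel \(|y|^{-d-sp}\). Write \(g(y)\coloneqq\|\tau_y v-v\|^p_{L^p(B_{|h|}(Q_T))}\) and \(r\coloneqq|h|\). For every \(\rho\in(0,r]\), \Cref{lemma: stima kubin 1} gives
\[
\|\tau_h v-v\|^p_{L^p(\T)}\le C\,\frac{r^p}{\rho^{d+p}}\int_{B_\rho} g(y)\,\ud y .
\]
The left-hand side does not depend on \(\rho\). We multiply both sides by a weight \(w(\rho)\ge0\) and integrate over \((0,r)\). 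We then exchange the order of integration with Fubini–Tonelli, since everything is nonnegative. This gives
\[
\Bigl(\int_0^r w(\rho)\,\ud\rho\Bigr)\|\tau_h v-v\|^p_{L^p(\T)}\le C r^p\int_{B_r} g(y)\int_{|y|}^{r}\frac{w(\rho)}{\rho^{d+p}}\,\ud\rho\,\ud y .
\]

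The natural choice is \(w(\rho)=\rho^{p-sp-1}=\rho^{(1-s)p-1}\), which is integrable at \(0\) because \((1-s)p>0\). Then \(\int_0^r w=\frac{r^{(1-s)p}}{(1-s)p}\), and this is exactly where the factor \(1-s\) will come from. The inner integral is
\[
\int_{|y|}^r \rho^{-d-sp-1}\,\ud\rho\le \frac{|y|^{-d-sp}}{d+sp}\le \frac{|y|^{-d-sp}}{d}.
\]
Substituting and dividing by \(\int_0^r w\), we obtain
\[
\|\tau_h v-v\|^p_{L^p(\T)}\le C\,\frac{(1-s)p}{d}\,r^{p-(1-s)p}\int_{B_r}\frac{g(y)}{|y|^{d+sp}}\,\ud y = C'(1-s)|h|^{sp}\int_{B_{|h|}}\frac{g(y)}{|y|^{d+sp}}\,\ud y,
\]
with \(C'=C'(p,d)\) independent of \(s\). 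This is the claim.

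The only delicate point is that the constant must stay uniform in \(s\), with the explicit \((1-s)\) factor isolated. This works because the bound \(1/(d+sp)\le 1/d\) is uniform in \(s\) and the constant of \Cref{lemma: stima kubin 1} depends only on \(p,d\). Measurability of \(y\mapsto g(y)\) follows from the continuity of translations in \(L^p_{\mathrm{loc}}\), which justifies the use of Tonelli. If \(h=0\), the statement is trivial.
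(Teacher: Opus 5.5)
Your proof is correct and follows essentially the paper's route. The paper simply combines \Cref{lemma: stima kubin 1} with the averaging estimate of \cite[Lemma~2.5]{Crismale23} taken with \(l=ps\); this amounts to integrating the bound against \(\rho^{(1-s)p-1}\,\ud\rho\) on \((0,|h|)\) and exchanging the order of integration, as you do, and your explicit computation (with constant \(Cp/d\), uniform in \(s\)) supplies the details the paper leaves to that reference.
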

To prove this last result, we use the estimate in \Cref{lemma: stima kubin 1} together with those in \cite[Lemma~2.5]{Crismale23} with parameters \(l=ps\). We also recall this result reported in \cite[Theorem~4.26]{Brezis10}:
\begin{theorem}[Fréchet-Kolmogorov] \label{thm: F.K.}
    Let \(p\in[1,+\infty)\) and \((u_n)_n\) be a bounded sequence in \(L^p(\T)\) such that 
    \[
    \lim_{|h|\to0}\sup_{n\in\N} \|\tau_h u_n-u_n\|^p_{L^p(\T)} = 0,
    \]
    then there exists \(u\in L^p(\T)\) and a subsequence \(u_{n_k}\to u\) in \(L^p(\T)\).
\end{theorem}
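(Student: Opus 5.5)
The statement to prove is the Fréchet–Kolmogorov compactness criterion on the torus; the plan is to reduce it to the classical version on a bounded set and extract the subsequence by a diagonal/Arzelà–Ascoli argument. Throughout, \(M\coloneqq\sup_n\|u_n\|_{L^p(\T)}<+\infty\).

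\emph{Step 1: mollification.} I would regularise each function by averaging over a small ball,
\[
u_n^\delta(x)\coloneqq \fint_{B_\delta}\tau_h u_n(x)\,\ud h .
\]
Then \(u_n^\delta\in C^\infty(\T)\) is still \(T\)-periodic. By Minkowski's integral inequality,
\[
\|u_n^\delta-u_n\|_{L^p(\T)}\le \sup_{|h|\le\delta}\sup_{n}\|\tau_h u_n-u_n\|_{L^p(\T)}\eqqcolon\eta(\delta),
\]
and the hypothesis gives \(\eta(\delta)\to0\) as \(\delta\to0\), uniformly in \(n\).

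\emph{Step 2: equicontinuity at fixed \(\delta\).} For each fixed \(\delta\) I would show the family \((u_n^\delta)_n\) is uniformly bounded and equicontinuous on \(Q_T\).
\begin{itemize}
\item Boundedness: by Hölder, \(|u_n^\delta(x)|\le |B_\delta|^{-1/p}\|u_n\|_{L^p(B_\delta(x))}\). Since \(B_\delta(x)\) meets at most \(C(d)\) periodic cells for \(\delta<T\), this is at most \(C\,|B_\delta|^{-1/p}M\).
\item Equicontinuity: similarly,
\[
|u_n^\delta(x+z)-u_n^\delta(x)|\le |B_\delta|^{-1}\int_{B_\delta(x)}|\tau_z u_n-u_n|\le C\delta^{-d/p}\|\tau_z u_n-u_n\|_{L^p(\T)},
\]
which tends to \(0\) as \(|z|\to0\) uniformly in \(n\) and \(x\).
\end{itemize}
Arzelà–Ascoli on the compact torus then yields a subsequence converging uniformly, hence in \(L^p(\T)\) since \(|Q_T|<\infty\).

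\emph{Step 3: total boundedness and diagonal extraction.} Fix \(\varepsilon>0\) and choose \(\delta\) with \(\eta(\delta)<\varepsilon\). By Step 2, \(\{u_n^\delta\}_n\) is relatively compact in \(L^p(\T)\), so it is covered by finitely many \(\varepsilon\)-balls. By Step 1, \(\{u_n\}_n\) is then covered by finitely many \(2\varepsilon\)-balls. Hence \(\{u_n\}_n\) is totally bounded in the Banach space \(L^p(\T)\) (\Cref{rmk: inclusioni sobolev}). Its closure is compact, which gives a subsequence \(u_{n_k}\to u\in L^p(\T)\); periodicity passes to the limit along an a.e.-convergent further subsequence.

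Alternatively, one can bypass Steps 1–3 entirely. Restrict \(u_n\) to a slightly larger cube \(\tilde Q\supset Q_T\), extend by zero outside, and apply \cite[Corollary~4.27]{Brezis10} on \(Q_T\subset\subset\tilde Q\). The translation condition holds on \(\tilde Q\) because of periodicity: \(\tilde Q\) is covered by finitely many translated cells, on each of which the \(L^p\) norm of \(\tau_hu_n-u_n\) equals the norm on \(Q_T\).

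The main obstacle is only bookkeeping with periodic cells in the local \(L^p\) bounds (Step 2, or the extension in the alternative). No genuinely new difficulty arises.
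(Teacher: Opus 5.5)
Your proof is correct, but it does not follow the paper's route, because the paper gives no proof at all: it only recalls the statement from \cite[Theorem~4.26]{Brezis10}. That theorem is stated for bounded families in \(L^p(\R^d)\), with compactness of their restrictions to sets of finite measure. A nonzero \(T\)-periodic function does not belong to \(L^p(\R^d)\), so the citation tacitly assumes an adaptation to the torus.

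Your Steps 1--3 supply that adaptation by rerunning Brezis's argument on \(\T\): regularise uniformly in \(n\), apply Arzel\`a--Ascoli to the regularised family, and conclude by total boundedness in the Banach space \(L^p(\T)\). In the periodic setting this is actually simpler than on \(\R^d\). Periodicity turns the hypothesis into control of \(\tau_h u_n-u_n\) on every translated cell, which is exactly what your local bounds on \(B_\delta(x)\) use. The boundedness of the fundamental cell removes the tightness step that \(\R^d\) requires for unbounded sets of finite measure. The citation is shorter; your argument is a self-contained proof in exactly the setting used later for compactness as \(s\to1^-\).

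Two small corrections.

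First, the ball averages \(u_n^\delta\) are continuous but not \(C^\infty\). Continuity follows from your own equicontinuity estimate, since \(\|\tau_z u_n-u_n\|_{L^p(\T)}\to0\) for each fixed \(n\). Continuity is all Arzel\`a--Ascoli needs, and a smooth mollifier would give smoothness if you want it.

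Second, the ``alternative'' route is loosely phrased. Extending \(u_n|_{\tilde Q}\) by zero creates a jump across \(\partial\tilde Q\). A uniform translation estimate on \(\R^d\) would then additionally require uniform \(p\)-equi-integrability of \((u_n)\) on thin layers near \(\partial\tilde Q\). This holds here, but its proof is precisely your Steps 1--2. What you actually want is the interior version: compactness of restrictions to \(\omega\subset\subset\Omega\) under interior translation estimates only. Its proof is again your Steps 1--3. Your main argument does not depend on the alternative, so nothing is lost.
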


We also recall this useful characterisation for \(W^{1,p}\) functions:
\begin{lemma}[Equivalent condition for \(W^{1,p}\) functions] \label{lemma: condizione equiv W1p}
Let \(p\in[1,+\infty)\) and \(v\in L^p(\T)\). Then \(v\in W^{1,p}(\T)\) (or \(v\in \mathrm{BV}(\T)\) if \(p=1\)) if and only if there exists \(C>0\) such that for every \(h\in\R^d\)
\[
\|\tau_h v-v\|_{L^p(\T)} \leq C|h|.
\]
\end{lemma}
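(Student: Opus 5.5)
We prove the two implications separately, reducing everything to the classical characterisation on bounded domains through periodicity; the hard direction is sufficiency.

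\emph{Necessity.} Assume \(v\in W^{1,p}(\T)\). We first take \(v\in C^\infty(\T)\) and use the fundamental theorem of calculus along the segment from \(x\) to \(x+h\):
\[
\tau_h v(x)-v(x)=\int_0^1 \nabla v(x+th)\cdot h\,\ud t .
\]
Jensen's inequality and Fubini then give
\[
\|\tau_h v-v\|^p_{L^p(\T)}\le |h|^p\int_0^1\int_{Q_T}|\nabla v(x+th)|^p\,\ud x\,\ud t=|h|^p\|\nabla v\|^p_{L^p(\T)} .
\]
The last equality holds because, by periodicity, the integral of a \(T\)-periodic function over the translated cell \(Q_T-th\) equals its integral over \(Q_T\). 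For general \(v\) we mollify, \(v_\eps=v\ast\rho_\eps\in C^\infty(\T)\), so that \(v_\eps\to v\) and \(\nabla v_\eps\to\nabla v\) in \(L^p(\T)\). Passing to the limit in the inequality yields the bound with \(C=\|\nabla v\|_{L^p(\T)}\). For \(p=1\) and \(v\in\mathrm{BV}(\T)\) we argue the same way: \(v_\eps\to v\) in \(L^1\) and \(\|\nabla v_\eps\|_{L^1(\T)}\le|\mathrm{D}v|(\T)\), hence \(C=|\mathrm{D}v|(\T)\).

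\emph{Sufficiency.} Assume \(\|\tau_h v-v\|_{L^p(\T)}\le C|h|\) for every \(h\in\R^d\). We test the difference quotients \(D_i^t v=(\tau_{te_i}v-v)/t\) against \(\varphi\in C^\infty(\T)\). A discrete integration by parts, which is exact on the torus since translation preserves the integral over \(Q_T\), gives
\[
\int_{Q_T} D_i^t v\,\varphi\,\ud x=-\int_{Q_T} v\,\frac{\varphi-\tau_{-te_i}\varphi}{t}\,\ud x\xrightarrow{t\to0}-\int_{Q_T} v\,\partial_i\varphi\,\ud x .
\]
By Hölder's inequality,
\[
\Bigl|\int_{Q_T} v\,\partial_i\varphi\,\ud x\Bigr|\le C\|\varphi\|_{L^{p'}(\T)} .
\]
For \(p>1\), density of \(C^\infty(\T)\) in \(L^{p'}(\T)\) and Riesz representation give \(g_i\in L^p(\T)\) with \(\|g_i\|\le C\) and \(\int v\,\partial_i\varphi=-\int g_i\varphi\). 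Alternatively, we can extract a weakly convergent subsequence of \((D_i^{t}v)_t\) by reflexivity, as in the compactness part of \Cref{thm: G-conv per s a 0}. This is the periodic weak derivative, so \(v\in W^{1,p}(\T)\). For \(p=1\), \(L^1\) is not reflexive and this is the delicate step. We use the same estimate only for \(\varphi\in C^\infty(\T)\), where it reads \(|\int v\,\partial_i\varphi|\le C\|\varphi\|_{L^\infty}\). The functional \(\varphi\mapsto\int v\,\partial_i\varphi\) is therefore bounded in the sup norm and extends to a finite Radon measure on the torus by Riesz–Markov. This gives \(\mathrm{D}v\in\mathcal M\), that is \(v\in\mathrm{BV}(\T)\).

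Finally, to pass between periodic test functions and the usual definitions on \(Q_T\), we observe the following. Restricting to \(Q_T\), compactly supported test functions in \(\mathrm{int}\,Q_T\) extend periodically, so the local statements on \(Q_T\) follow. Conversely, the periodic derivative agrees with the local one on each translated cell, so no boundary terms arise.
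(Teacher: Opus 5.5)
The paper gives no proof of this lemma. It is recalled as a known characterisation, with a pointer to \cite[Lemma~6.14]{Leoni23} for the fractional counterpart, and no adaptation to the torus is written out. Your proposal supplies that argument: the classical difference-quotient characterisation, transplanted to the periodic setting. Necessity uses the fundamental theorem of calculus, Jensen and mollification; sufficiency uses discrete integration by parts and duality (Riesz representation in \(L^{p'}\) for \(p>1\), Riesz--Markov on the compact torus for \(p=1\)). The core of both directions is correct, and the torus makes it cleaner than on a bounded domain. Translations preserve integrals over the cell, so you need no subdomains \(\omega\Subset\Omega\) and no restriction \(|h|<\mathrm{dist}(\omega,\partial\Omega)\). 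You also get explicit constants: \(C=\|\nabla v\|_{L^p}\), and conversely \(\|\partial_i v\|_{L^p}\le C\), resp.\ \(|\mathrm{D}_iv|(\T)\le C\). The citation buys the paper brevity. Your version buys a self-contained proof, but it forces you to say which periodic Sobolev space is meant, and that is where your last paragraph goes wrong.

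The sentence ``the periodic derivative agrees with the local one on each translated cell, so no boundary terms arise'' is false as a converse. If you only know \(v|_{Q_T}\in W^{1,p}(\mathrm{int}\,Q_T)\), integrating by parts against a periodic \(\varphi\) leaves face terms from the difference of the traces of \(v\) on opposite faces of \(Q_T\), and these need not vanish. Take \(d=1\), \(T=1\), \(v(x)=x-\lfloor x\rfloor\). Then \(v|_{(0,1)}\in W^{1,p}(0,1)\), but \(\|\tau_hv-v\|^p_{L^p}\ge h(1-h)^p\), so \(\|\tau_hv-v\|_{L^p}\) is not \(O(h)\) for \(p>1\).

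So for \(p>1\) the lemma, and your necessity step (which uses \(\nabla(v\ast\rho_\eps)=(\nabla v)\ast\rho_\eps\)), hold only if \(W^{1,p}(\T)\) means \(T\)-periodic functions in \(W^{1,p}_{\mathrm{loc}}(\R^d)\). Equivalently: cellwise \(W^{1,p}\) with equal traces on opposite faces. This is also what the embedding \(W^{1,p}(\T)\hookrightarrow W^{s,p}(\T)\) in \Cref{rmk: inclusioni sobolev} needs. State this as the definition rather than claiming it follows from the cellwise one. For the valid direction (periodic test functions \(\Rightarrow\) \(W^{1,p}_{\mathrm{loc}}(\R^d)\)), periodise an arbitrary \(\psi\in C^\infty_c(\R^d)\), not only test functions supported inside the cell.

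For \(p=1\) the paper's cellwise definition of \(\mathrm{BV}(\T)\) is harmless, but not because boundary terms are absent. They are present, given by the jump \(|v^+-v^-|\,\mathcal{H}^{d-1}\) across the faces. They are finite because \(\mathrm{BV}\) traces on the Lipschitz boundary of the cube lie in \(L^1\). Hence the periodic extension is \(\mathrm{BV}_{\mathrm{loc}}\), and the constant in your necessity argument is the total variation on the torus including this face contribution, not \(|\mathrm{D}v|(Q_T)\). For the sawtooth above, \(\|\tau_hv-v\|_{L^1}=2h(1-h)\) while \(|\mathrm{D}v|((0,1))=1\). With this definitional fix your proof is complete.
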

For a more general result with \(s \in (0,1)\), see \cite[Lemma~6.14]{Leoni23}.

\begin{theorem}[\(\Gamma\)-convergence for \(s\to 1^-\)] \label{thm: G-conv per s a 1}
	Let \(p\in[1,+\infty)\) and \((s_n)_n \subset (0,1)\) be such that \(s_n \rightarrow 1^-\) as \(n \rightarrow +\infty\).
    
		  \emph{1. Compactness.} Let \((u_n)_n\subset L^p(\T)\) such that there exists a constant \(C \in \R\) such that
        \begin{equation} \label{eq: Hyp for compactness 2}
            \sup_{\substack{n\in\N}} \left((1-s_n) \F_p^{s_n}(u_n)+\|u_n\|^p_{L^p(\T)}\right) \leq C,
        \end{equation}
		then, without relabeling subsequences, \(u_n \rightarrow u\) in \(L^p(\T)\) for some \(u \in W^{1,p}(\T)\) if \(p\neq 1\), or \(u\in \mathrm{BV}(\T)\) if \(p=1\).
		
		  \emph{2. \(\Gamma\)-\(\liminf\) inequality.} For all \(u \in L^p(\T)\) and all \((u_n)_n\subset L^p(\T)\) such that \(u_n \rightarrow u\) in \( L^p(\T)\) the following holds:
        \[
        \begin{dcases}
            \liminf_{n\rightarrow +\infty} (1-s_n) \F_p^{s_n}(u_n) \geq K_{d,p}\|\nabla u\|_{L^p(\T)}^p, \qquad & p\neq 1 \\[6pt]
            \liminf_{n\rightarrow +\infty} (1-s_n) \F_1^{s_n}(u_n) \geq K_{d,1}|\mathrm{D}u|(\T), \qquad & p=1.
        \end{dcases}
        \]
        
		\emph{3. \(\Gamma\)-\(\limsup\) inequality.} For all \(u \in L^p(\T)\) there exists \((u_n)_n \subset L^p(\T)\) such that \(u_n \rightarrow u \) in \( L^p(\T)\) and 
        \[
        \begin{dcases}
            \limsup_{n\rightarrow +\infty} (1-s_n) \F_p^{s_n}(u_n) \leq K_{d,p}\|\nabla u\|_{L^p(\T)}^p, &\qquad p\neq 1 \\[6pt]
            \limsup_{n\rightarrow +\infty} (1-s_n) \F_1^{s_n}(u_n) \leq K_{d,1}|\mathrm{D}u|(\T), &\qquad p= 1.
        \end{dcases}
        \]
\end{theorem}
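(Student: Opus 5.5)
We prove the three parts separately, following the scheme of \cite{Crismale23, Kubin25}, with \Cref{lemma: stime kubin 2}, \Cref{thm: F.K.} and \Cref{lemma: condizione equiv W1p} as the main tools.

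\emph{Compactness.} Fix \(h\in\R^d\) with \(|h|\le 1\). Apply \Cref{lemma: stime kubin 2} to \(v=u_n\) with \(s=s_n\). By periodicity, \(B_{|h|}(Q_T)\) is covered by a fixed number \(N(d)\) of translated cells, so
\[
\|\tau_y u_n-u_n\|^p_{L^p(B_{|h|}(Q_T))}\le N(d)\|\tau_y u_n-u_n\|^p_{L^p(\T)}.
\]
Moreover,
\[
\int_{B_{|h|}}\frac{\|\tau_y u_n-u_n\|^p_{L^p(\T)}}{|y|^{d+s_np}}\,\ud y\le \F^{s_n}_p(u_n),
\]
because after the change of variables \(y=x+z\) we have \(\F^{s_n}_p(u_n)=\int_{\R^d}\|\tau_z u_n-u_n\|^p_{L^p(\T)}|z|^{-d-s_np}\,\ud z\). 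Combining these gives
\[
\|\tau_h u_n-u_n\|^p_{L^p(\T)}\le C\,|h|^{s_np}(1-s_n)\F^{s_n}_p(u_n)\le C'|h|^{s_np}.
\]
Since \(s_n\to1\), we may assume \(s_n\ge \tfrac12\), so the right-hand side is at most \(C'|h|^{p/2}\) uniformly in \(n\). This, together with the \(L^p\) bound in \eqref{eq: Hyp for compactness 2}, lets \Cref{thm: F.K.} give a strongly convergent subsequence \(u_n\to u\). Passing to the limit in the displayed estimate (strong convergence gives \(\|\tau_h u_n-u_n\|\to\|\tau_h u-u\|\)) yields \(\|\tau_h u-u\|_{L^p}\le C|h|\). \Cref{lemma: condizione equiv W1p} then gives \(u\in W^{1,p}(\T)\), or \(u\in \mathrm{BV}(\T)\) when \(p=1\).

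\emph{\(\Gamma\)-liminf.} Assume the liminf is finite and pass to a subsequence realising it. Let \(\rho_\eps\) be a mollifier and set \(u_n^\eps=u_n*\rho_\eps\). Jensen's inequality and the convexity of \(|\cdot|^p\), applied to \(\F^s_p(u)=\int\|\tau_z u-u\|^p|z|^{-d-sp}\,\ud z\), give \(\F^{s}_p(u_n^\eps)\le\F^{s}_p(u_n)\). So it suffices to bound \(\liminf_n(1-s_n)\F^{s_n}_p(u^\eps_n)\) from below. For fixed \(\eps\), \(u^\eps_n\to u^\eps\) in \(C^2(\T)\), and the ball part of the energy dominates the total. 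We therefore repeat the Taylor argument of \Cref{prop: puntuale a 1} on \(B_1\), with \(O(|x-y|^2)\) remainders bounded uniformly in \(n\) by \(\|D^2u^\eps_n\|_\infty\). This gives
\[
\liminf_n(1-s_n)\F^{s_n}_p(u_n)\ge K_{d,p}\|\nabla u^\eps\|^p_{L^p(\T)}.
\]
Letting \(\eps\to0\), for \(p>1\) we get \(\nabla u^\eps\to\nabla u\) in \(L^p\), since \(u\in W^{1,p}\) by compactness (the \(L^p\) bound holds because \(u_n\to u\)). For \(p=1\), \(|\mathrm{D}u^\eps|(\T)\to|\mathrm{D}u|(\T)\) by mollification of \(\mathrm{BV}\) functions; if the limiting quantity is infinite, the inequality is trivial since the left side would then also be infinite by the \(\eps\)-uniform bound.

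\emph{\(\Gamma\)-limsup.} For \(u\in C^\infty(\T)\) we take the constant recovery sequence and apply \Cref{prop: puntuale a 1}. For general \(u\) we argue by density in energy. For \(p>1\), smooth periodic functions are dense in \(W^{1,p}(\T)\) with convergence of \(\|\nabla\cdot\|_p\). For \(p=1\), we use strict convergence \(u^\eps\to u\), \(|\mathrm{D}u^\eps|(\T)\to|\mathrm{D}u|(\T)\). The \(\Gamma\)-limsup is lower semicontinuous in \(L^p\), so \cite[remark~1.29]{Braides02} concludes. If \(u\notin W^{1,p}\) (or \(\notin\mathrm{BV}\)), the right side is \(+\infty\) and there is nothing to prove.

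The main obstacle is the uniformity in \(n\) in the liminf step. The Taylor remainder must be controlled with \(\eps\) fixed, which holds since \(\|D^2 u_n^\eps\|_\infty\le C_\eps\|u_n\|_{L^p}\). One must also check that the leading term \(\int|\nabla u_n^\eps\cdot z|^p|z|^{-d-s_np}\,\ud z\) converges to \(K_{d,p}\|\nabla u^\eps\|^p\) after multiplication by \((1-s_n)\); this follows from uniform convergence of \(\nabla u^\eps_n\).
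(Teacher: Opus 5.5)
Your proposal is correct and follows essentially the same route as the paper. Compactness comes from \Cref{lemma: stime kubin 2}, Fr\'echet--Kolmogorov and \Cref{lemma: condizione equiv W1p}; the $\liminf$ comes from mollification, a Taylor remainder controlled uniformly in $n$ by $\|\mathrm{D}^2u_n^\eps\|_\infty$, and then $\eps\to0$; the $\limsup$ comes from pointwise convergence on $C^\infty(\T)$ plus density. You also make explicit several points the paper leaves implicit: taking $s_n\ge\frac12$ and $|h|\le1$ to get a uniform modulus of continuity, invoking compactness to know $u\in W^{1,p}(\T)$ before sending $\eps\to0$, working on $B_1(x)$ instead of $B_R$ with the distance weight, and using strict convergence for the density step when $p=1$.
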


\begin{proof}\strut 

\emph{1. Compactness.} Using \Cref{lemma: stime kubin 2} and the upper bound in \cref{eq: Hyp for compactness 2} we obtain that 
\begin{equation} \label{eq: stima per FK}
     \|\tau_h u_n-u_n\|^p_{L^p(\T)} \leq C|h|^{ps_n}(1-s_n)\F_p^{s_n}(u_n) \leq \tilde C|h|^{ps_n}.
\end{equation}
Using once again the upper bound in \cref{eq: Hyp for compactness 2} and the inequality above, we can apply \Cref{thm: F.K.} to the sequence \((u_n)_n\), hence there exists \(u\in  L^p(\T)\) and a subsequence \((u_{n_k})_k\) such that \(u_{n_k}\rightarrow u\) in \(L^p(\T)\) as \(k\to+\infty\). Furthermore, taking the limit as \(s_n\to 1^-\) in \cref{eq: stima per FK} we can observe that \(u\) satisfies the hypotheses of \Cref{lemma: condizione equiv W1p}, therefore \(u\in W^{1,p}(\T)\) (or \(\mathrm{BV}(\T)\) for \(p=1\)).

\emph{2. \(\Gamma\)-liminf inequality.} For every \(\varepsilon>0\), we start by considering a standard mollifier, \(\rho_\varepsilon:B_\varepsilon\to\R\), and \(u_n^\varepsilon\coloneqq u_n *\rho_\varepsilon\). Using Jensen's inequality, we can easily verify that for every \(s \in (0,1)\)
\begin{align} 
    \F^s_p(u^\varepsilon_n) =& \int_{Q_T}\int_{\R^d} \frac{|u_n^\varepsilon(x)-u_n^\varepsilon(y)|^p}{|x-y|^{d+sp}} \, \ud y \ud x \label{eq: convoluzione abbassa energia}\\
    \leq & \int_{Q_T}\int_{\R^d}\int_{B_\varepsilon} \frac{|u_n(x-z)-u_n(y-z)|^p}{|x-y|^{d+sp}} \rho_\varepsilon(z) \, \ud z \ud y \ud x \nonumber\\
    = & \F^s_p(u_n). \nonumber
\end{align}
We now prove that for every \(\varepsilon>0\) and \(R>0\) it holds 
\begin{equation} \label{eq: disug dimostrazione liminf}
    K_{d,p} \liminf_{n\to+\infty} \int_{Q_T}|\nabla u_n^\varepsilon|^p \left(\text{dist}(x,\partial B_R)\right)^{p(1-s_n)} \ud x \leq \liminf_{n\to+\infty} (1-s_n)[u_n^\varepsilon]^p_{s_n,p}|_{B_R}.
\end{equation}
Indeed, for every \(n\), \cref{eq: sviluppo alla p} holds true with a different constant \(C_n=C\|\mathrm{D^2}u^\eps_n\|_{\infty}\), i.e.
\[
	|u_n^\varepsilon(x)-u_n^\varepsilon(y)|^p= |\nabla u_n^\varepsilon(x)\cdot(x-y)|^p + C_n\left(|x-y|^{p+1}\right),
\]
and thus, thanks to \cref{eq: Hyp for compactness 2} and the following estimate
\[
	|\mathrm{D}^2 u_n^\varepsilon(x)| = |(u*\mathrm{D}^2\rho_\varepsilon)(x)| \leq \|u_n\|_{L^1(\T)}\|\rho_\varepsilon\|_{C^2},
\]
(see \cref{rmk: inclusioni sobolev}), it holds true uniformly in \(n\) for some \(C>0\):
\[
	|u_n^\varepsilon(x)-u_n^\varepsilon(y)|^p \geq |\nabla u_n^\varepsilon(x)\cdot(x-y)|^p - C|x-y|^{p+1}.
\]
Let \(x\in Q_T\), setting \(\delta\coloneqq \text{dist}(x,\partial B_R)\) we get
\begin{align}
    (1-s_n)\int_{B_R} \frac{|u_n^\varepsilon(x)-u_n^\varepsilon(y)|^p}{|x-y|^{d+s_np}}\, \ud y \geq& (1-s_n)\int_{B_\delta(x)} \frac{|u_n^\varepsilon(x)-u_n^\varepsilon(y)|^p}{|x-y|^{d+s_np}}\, \ud y \nonumber\\
    \geq& (1-s_n)\int_{B_\delta(x)} \left\lvert \nabla u_n^\varepsilon(x)\cdot (x-y) \right\rvert^p |x-y|^{-d-s_np} \ud y \nonumber\\
    &- (1-s_n)\int_{B_\delta(x)} C\left(|x-y|^{p(1-s_n)+1-d}\right) \ud y \nonumber \\
    \geq& K_{d,p}\delta^{p(1-s_n)} |\nabla u_n^\varepsilon(x)|^p - (1-s_n)C(\varepsilon,R,d), \label{eq: conto liminf s a 1}
\end{align}
where once again we used \cite[equation~(3.23)]{Kubin25}. By integrating \cref{eq: conto liminf s a 1} over \(Q_T\) and taking the \(\liminf\) for \(n\to+\infty\) we recover inequality \eqref{eq: disug dimostrazione liminf}. Now we use \cref{eq: convoluzione abbassa energia} on the right side of \cref{eq: disug dimostrazione liminf} and we take \(R\) such that \(\text{dist}(Q_T,\partial B_R)>1\). Recall that \(u_n^\varepsilon \to u^\varepsilon\) in \(L^p(\T)\) and that, thanks to \cref{eq: conto liminf s a 1} and the upper bound in \cref{eq: Hyp for compactness 2}, \((\nabla u_n^\varepsilon)_n\) is equibounded in \(L^p(\T)\). This means that we can apply Fatou's lemma to get:
\begin{align*}
    \liminf_{n\to+\infty} (1-s_n)[u_n]^p_{s_n,p}|_{B_R} &\geq \liminf_{n\to+\infty} (1-s_n)[u_n^\varepsilon]^p_{s_n,p}|_{B_R} \\
    &\geq K_{d,p} \liminf_{n\to+\infty} \int_{Q_T}|\nabla u_n^\varepsilon|^p \left(\text{dist}(x,\partial B_R)\right)^{p(1-s_n)} \ud x \\
    &\geq K_{d,p} \liminf_{n\to+\infty} \int_{Q_T}|\nabla u_n^\varepsilon|^p \ud x  \\
    & \geq K_{d,p} \int_{Q_T}|\nabla u^\varepsilon|^p \ud x.
\end{align*}
Taking the limit for \(R\to+\infty\) and \(\varepsilon\to0\), recalling that \(u^\varepsilon\to u\) in \(W^{1,p}(\T)\) (or \(\mathrm{BV}(\T)\) for \(p=1\)) we have the thesis.

\emph{3. \(\Gamma\)-limsup inequality.} In the same way as we did for \(s\to0^+\) we show the limit for \(u\in C^\infty(\T)\), using the pointwise convergence with recovery sequence \(u_n\equiv u\) and the density argument in \cite[remark~1.29]{Braides02}.

\end{proof}

\section{Characterisation of minimisers for \(\F^s_p\) and \(\F^0_p\)}\label{sez: minimi rigidi}

In this Section we aim to minimise $\mathcal F^s_p$ and its $\Gamma$-limit \(\F^0_p\), among the functions $u$ whose distributional derivative is an atomic measure, screened by the Lebesgue measure. 
The main motivation for our analysis comes from Materials Science and, in particular, from the study of misfit dislocations at semi-coherent interfaces \cite{van50}.

\subsection{Elastic energy on the half-plane}

In this setting, we consider \(p=2\), \(s=\frac{1}{2}\) and two square atomic lattices of slightly different atomic spacing meeting at a flat interface. Then, one expects that, along the interface, a periodic configuration of dislocations, i.e. topological singularities of the elastic strain, should arise. Since the \(H^\frac{1}{2}\) energy induces a repulsive logarithmic-type interaction between defects \cite{Garroni12}, a natural question is whether their optimal arrangement is given by the universal triangular Abrikosov lattice, as is standard for logarithmically interacting particles \cite{Sandier12}, or whether it is strongly constrained by the square symmetry of the underlying atomic lattices. Regardless of the underlying motivation, our objective is showing that, in a simplified one-dimensional model, the minimiser of the \(W^{s,p}\)-seminorm energy shows indeed a fundamental periodicity, independent of the \(T\)-periodicity assumption.

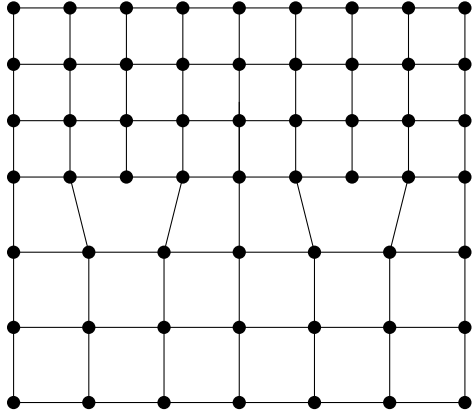
\begin{figure}[h]
	\centering
\makebox[0pt]{
\scalebox{1}{
	\begin{tikzpicture}[
		punto/.style={circle, fill=black, inner sep=0, minimum size=5pt},
		freccia_centrale/.style={
			decoration={
				markings,
				mark=at position 0.5 with {\arrow{Stealth[length=3mm, width=2.5mm]}}
			},
			postaction={decorate}
		}
		]
		\draw (0,2)--(0,3);
		\draw (1,2)--(3/4,3);
		\draw (2,2)--(2+1/4,3);
		\draw (3,2)--(3,4);
		\draw (4,2)--(3+3/4,3);
		\draw (5,2)--(5+1/4,3);
		\draw (6,2)--(6,3);

		\def\colonnea{7}      
		\def\righea{3}        
		\def\passoXa{1}    
		\def\passoYa{1}    
		
		\pgfmathtruncatemacro{\maxXa}{\colonnea - 1}
		\pgfmathtruncatemacro{\maxYa}{\righea - 1}
		
		
		\foreach \j in {0,...,\maxYa} {
			\draw (0, \j * \passoYa) -- (\maxXa * \passoXa, \j * \passoYa);
		}
		
		\foreach \i in {0,...,\maxXa} {
			\draw (\i * \passoXa, 0) -- (\i * \passoXa, \maxYa * \passoYa);
		}
		
		\foreach \i in {0,...,\maxXa} {
			\foreach \j in {0,...,\maxYa} {
				\coordinate (p-\i-\j) at (\i * \passoXa, \j * \passoYa);
				\node[punto] at (\i * \passoXa, \j * \passoYa) {};
			}
		}
		
		\def\colonneb{9}      
		\def\righeb{4}        
		\def\passoXb{3/4}    
		\def\passoYb{3/4}    
		\def\trasly{\righea*\passoYa} 
		
		\pgfmathtruncatemacro{\maxXb}{\colonneb - 1}
		\pgfmathtruncatemacro{\maxYb}{\righeb - 1}
		
		
		\foreach \j in {0,...,\maxYb} {
			\draw (0, \trasly +\j * \passoYb) -- (\maxXb * \passoXb,\trasly + \j * \passoYb);
		}
		
		\foreach \i in {0,...,\maxXb} {
			\draw (\i * \passoXb, \trasly) -- (\i * \passoXb, \maxYb * \passoYb+ \trasly);
		}
		
		\foreach \i in {0,...,\maxXb} {
			\foreach \j in {0,...,\maxYb} {
				\coordinate (q-\i-\j) at (\i * \passoXb, \j * \passoYb + \trasly);
				\node[punto] at (\i * \passoXb, \j * \passoYb + \trasly) {};
			}
		}
	\end{tikzpicture}
}
}
    \label{fig: edge dislocations}
	\caption{A visualisation of edge dislocations in a square crystal lattice with two different atomic distances. The image represents the view of a section orthogonal to the \(x_3\) axis.}
\end{figure}%

Following the general idea of \cite[Section 2]{DeLuca24}), let $\mathbb{H}^\pm:=\R\times\R^\pm\times\R$ and $\mathbb{H}^0:=\R\times\{0\}\times\R$. According to isotropic linear elasticity, the elastic energy of a planar displacement $U^\pm\in H^1(\mathbb{H}^\pm,\R^3)$ (where \(U\coloneqq U^\pm\) on \(\mathbb{H}^\pm\)) is given by 
\begin{equation*}
    \mathcal{E}_{3\mathrm{D}}^\mathrm{el}(U^\pm) \coloneqq  \frac{1}{2}\int_{\mathbb{H}^\pm} \lambda \left|\text{tr}(e(U))\right|^2 +2\mu \left|e(U) \right|^2 \ud x= \frac{1}{2}\int_{\mathbb{H}^\pm} \sigma(U):e(U) \,\ud x,
\end{equation*}
where $\lambda$ and $ \mu$ represent the Lamé moduli of the elastic material, satisfying $\mu>0$, $\lambda+\mu>0$ and \(e(U)=\tfrac{1}{2}\left(\nabla U + \nabla^\mathrm{T} U\right)\) and \(\sigma(U)=\lambda \mathrm{tr}(e(U))\mathrm{Id}+2\mu e(U)\). 

For modelling reasons, we consider edge dislocations as parallel lines lying on the \(\{x_2=0\}\) plane and orthogonal to \(e_1\). Due to the symmetry of the lattice, we can apply some simplifications to the model by considering only displacements in the directions orthogonal to the dislocation lines, i.e. the displacement vector is \(U=(U_1,U_2,0)\) and \(U=U(x_1,x_2)\). Following standard assumptions for straight edge dislocations (see, e.g., \cite[equation (1.1)]{cinesi20}), we impose mirror symmetry with respect to the slip plane. Specifically, the shear displacement \(U_1\) is assumed to be odd in \(x_2\), while the vertical displacement \(U_2\) is even. Now, denoting with \(\mathsf{H}^\pm:=\R\times\R^\pm\) and \(\mathsf{H}^0:=\R\times\{0\}\), we take the trace of \(U^\pm\in H^1(\mathsf{H}^\pm,\R^2)\) on \(\mathsf{H}^0\) so that \(U^\pm(x_1,0)=(u_1^\pm(x_1),u_2^\pm(x_1))=u^\pm(x_1)\in H^{\frac{1}{2}}(\mathsf{H}^0,\R^2)\). By virtue of the aforementioned symmetries, the traces satisfy \(u_1^+(x_1) = -u_1^-(x_1)\) and \(u_2^+(x_1) = u_2^-(x_1)\). From now on we consider the 2D elastic energy:
\begin{equation} \label{eq: 2D energia elastica}
    \mathcal{E}_{2\mathrm{D}}^{\mathrm{el}}(U^\pm) \coloneqq  \frac{1}{2}\int_{\mathsf{H}^\pm} \lambda \left|\text{tr}(e(U))\right|^2 +2\mu \left|e(U) \right|^2 \ud x= \frac{1}{2}\int_{\mathsf{H}^\pm} \sigma(U):e(U) \,\ud x.
\end{equation}
We now observe that the displacement vector that minimises the energy on \(\mathsf{H}^\pm\) given the trace \(u^\pm\in H^{\frac{1}{2}}(\mathsf{H}^0,\R^2)\), i.e.
\[
    \mathrm{argmin}\left\{\mathcal{E}_{2\mathrm{D}}^{\mathrm{el}}(U^\pm)\,|\, U^\pm|_{\mathsf{H}^0}=u^\pm\right\},
\]
solves the Euler-Lagrange equations
\begin{equation} \label{eq: div}
    \begin{cases}
        \text{Div}(\sigma(U))=0 \quad & \text{on }\mathsf{H}^\pm \\
        U^\pm=u^\pm \quad & \text{on }\mathsf{H}^0,
    \end{cases}
\end{equation}
where we denote by \virgolette{\(\text{div}\)} the usual divergence operator and with \virgolette{\(\text{Div}\)} the divergence operator for matrix fields. By the symmetry of the stress tensor, we have \(\sigma(U):e(U) = \sigma(U):\nabla U\), and applying the Leibniz rule to the divergence of the vector field \(\sigma(U)U\), we derive the pointwise identity
\[
    \text{div}(\sigma(U)U) = \text{Div}(\sigma(U)) \cdot U + \sigma(U):\nabla U = \text{Div}(\sigma(U)) \cdot U + \sigma(U):e(U).
\]
Using this relation on \cref{eq: 2D energia elastica}, we can write
\[
    \mathcal{E}_{2\mathrm{D}}^{\mathrm{el}}(U^\pm)= \frac{1}{2}\int_{\mathsf{H}^\pm} \text{div}(\sigma(U)U)-\text{Div}(\sigma(U))\cdot U \,\ud x,
\]
where the second term is 0 thanks to \cref{eq: div}. Now, we apply the Divergence Theorem to the first term and we get
\[
    \mathcal{E}_{2\mathrm{D}}^{\mathrm{el}}(U^\pm)=\frac{1}{2}\int_{\mathsf{H}^\pm} \text{div}(\sigma(U)U)\, \ud x = \frac{1}{2}\int_{\mathsf{H}^0} \sigma(U^\pm)U^\pm\cdot\nu^\pm \,\ud x_1 = \frac{1}{2}\int_{\mathsf{H}^0} \sigma_{12}(U^\pm) u_1^\pm + \sigma_{22}(U^\pm) u_2^\pm\,\ud x_1,
\]
where \(\nu^\pm=\mp e_2\). To conclude, we recall that the symmetry assumptions on \(U\) (\(U_1\) odd and \(U_2\) even with respect to \(x_2\)) imply that the strain components \(e_{11}\) and \(e_{22}\) are both odd in \(x_2\). As a result, the normal stress \(\sigma_{22}(U^\pm)\) is an odd function as well. Assuming continuity in \(x_2\) of \(\sigma_{22}(U)\) for modelling reasons, we deduce that \(\sigma_{22}(U)=0\) on \(\mathsf{H}^0\) (in accordance with \cite[equation (2.48)]{cinesi20}). Therefore, the term involving \(u_2^\pm\) vanishes, and we can rewrite the remaining energy as
\[
     \mathcal{E}_{2\mathrm{D}}^{\mathrm{el}}(U) = \mathcal{E}_{2\mathrm{D}}^{\mathrm{el}}(U^+)+\mathcal{E}_{2\mathrm{D}}^{\mathrm{el}}(U^-) = C(\lambda,\mu) \int_{\R} \int_{\R} \frac{|u_1(x)-u_1(y)|^2}{|x-y|^{2}} \,\ud y \ud x,
\]
where \(u_1\coloneqq u_1^+-u_1^-\). In hindsight, we can assume that \(u\) is a scalar valued function to get the same result.

Assume now that \(u\in H^\frac{1}{2}_{\mathrm{loc}}(\R)\) is \(T\)-periodic. Let \(U\in H^1([0,T]\times\R,\R^2)\) be the unique minimiser of the elastic energy 
\[
    \mathcal{E}_{T}^{\mathrm{el}}(U) \coloneqq  \frac{1}{2}\int_{[0,T]\times\R} \lambda \left|\text{tr}(e(U))\right|^2 +2\mu \left|e(U) \right|^2 \ud x= \frac{1}{2}\int_{[0,T]\times\R} \sigma(U):e(U) \,\ud x,
\]
among functions \(V \in H^1([0,T]\times\R,\R^2)\) such that \(V|_{[0,T]\times\{0\}}=u\) and \(V(0,\cdot)=V(T,\cdot)\) in the sense of traces. Moreover, let \(\tilde U\in H^1_{\mathrm{loc}}(\R^2,\R^2)\) the \(T e_1\)- periodic extension of \(U\). Then \(\tilde U\) solves \cref{eq: div} and, by similar computations, using periodicity we get 
\[
    \mathcal{E}_{T}^{\mathrm{el}}(\tilde U) = C(\lambda,\mu,T) \F^\frac{1}{2}_2(u).
\]

\subsection{Generalised \(d\)-dimensional energy}

There is no immediate natural extension of this model to the \(d\)-dimensional setting, or for \((s,p)\neq\left(\frac{1}{2},2\right)\); nevertheless, the problem given by the energy \cref{eq: seminorma sp} is mathematically intriguing. Following the idea of the linear elasticity model, we consider the ambient space $\mathbb{R}^d$ with coordinates $x = (x_1, x_2, x')$, where $x' = (x_3, \dots, x_d) \in \mathbb{R}^{d-2}$ represent the directions parallel to the defect \virgolette{lines}. The domains are suitably generalized as the half-spaces $\mathbb{H}^\pm := \mathbb{R} \times \mathbb{R}^\pm \times \mathbb{R}^{d-2}$, separated by the hyperplane $\mathbb{H}^0 := \mathbb{R} \times \{0\} \times \mathbb{R}^{d-2}$.

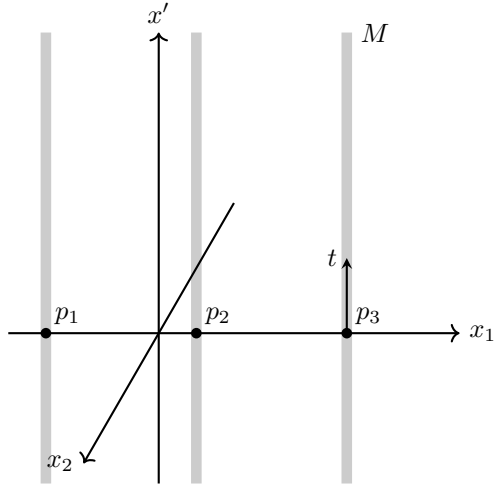
\begin{figure}[H]
	\centering
	\begin{tikzpicture}[
		x={(1cm,0cm)}, 
		z={(0cm,1cm)}, 
		y={(-120:0.5cm)} 
		]
		
		\draw[line width=4pt, black!20](-1.5,0,-2) -- (-1.5,0,4);
		\draw[line width=4pt, black!20](.5,0,-2) -- (.5,0,4);
		\draw[line width=4pt, black!20](2.5,0,-2) -- (2.5,0,4) node[black,right]{\(M\)};
		
		\draw[->, thick] (-2,0,0) -- (4,0,0) node[right] {$x_1$};
		\draw[->, thick] (0,-4,0) -- (0,4,0) node[left] {$x_2$};
		\draw[->, thick] (0,0,-2) -- (0,0,4) node[above] {$x'$};
		
		\fill (-1.5,0,0) circle (2pt) node[above right]{\(p_1\)};
		\fill (.5,0,0) circle (2pt) node[above right]{\(p_2\)};
		\fill (2.5,0,0) circle (2pt) node[above right]{\(p_3\)};
		
		\draw[black, thick, -{stealth}] (2.5,0,0) -- (2.5,0,1) node[left]{\(t\)};
	\end{tikzpicture}
	\label{fig: dislocazioni in dim d}
	\caption{In the scheme above we picture dislocation as \virgolette{lines} orthogonal to the \((x_1,x_2)\)-plane in the direction \(x'\), representing the last \(d-2\) variables. In this setting the multivector \(t\) tangent to the dislocation manifold \(M\) is pointing in the direction of \(x'\).}
\end{figure}%

In this framework, the deformation of the crystal is modelled by the absolutely continuous part of the distributional displacement gradient, represented by a $d \times d$ matrix field $\beta \in L^2(\R^d,\R^{d\times d})$. The presence of an array of parallel edge dislocations introduces a topological constraint on $\beta$. Due to the aforementioned modelling assumptions, it is reasonable to assume the displacement vector \(U\in H^1(\R^d\setminus M,\R^d)\), with \(U^\pm=U|_{\mathbb{H}^\pm}\), to be regular outside of the defect manifold \(M\coloneqq P\times \R^{d-2}\subset \mathbb{H}^0\), where \(P=\bigcup_{k}\{p_k\}\times\{0\}\subset \R\times\{0\}\).

The circulation of $\beta$ around any defect must be equal to a fixed vector that we take to be \(e_1\); therefore, in the sense of distributions, this translates to the curl constraint (denoting with \virgolette{Curl} the curl operator for matrix fields):
\begin{equation}\label{eq: curl constraint}
    \mathrm{Curl}(\beta) = e_1 \otimes t \otimes \mu,
\end{equation}
where $t=e_3 \wedge \ldots \wedge e_d$ is the tangent multivector to the defect manifold \(M\), and \(\mu(x_1,x_2) \coloneqq \left(\sum_{k}\delta_{p_k}(x_1)\right)\otimes \delta_0(x_2)\) is a measure supported on \(P\). 

Let $\beta^{(i)}$ denote the $i$-th row of $\beta$, viewed as a vector field in $\mathbb{R}^d$. The standard curl of a vector field in $\mathbb{R}^d$ is the antisymmetric tensor defined by $\mathrm{curl}(\beta^{(i)})_{jk} = \partial_j \beta_{ik} - \partial_k \beta_{ij}$. Because of the curl constraint, only the first row carries a topological singularity. Furthermore, since the defect lines are orthogonal to the $\left(x_1,x_2\right)$-plane, this singularity is entirely confined to the $e_1 \wedge e_2$ subspace. Thus, \cref{eq: curl constraint} reduces to:
\begin{equation} \label{eq: rotore_esplicito}
    \mathrm{curl}(\beta^{(i)}) = 
    \begin{cases}
    \mu(x_1, x_2) \, (e_1 \otimes e_2 - e_2 \otimes e_1) & \quad \text{if } i = 1, \\
    0 & \quad \text{if } i \ge 2.
    \end{cases}
\end{equation}

We are interested in minimising the Dirichlet \(p\)-energy of the displacement. Since the measure \(\mu\) is translation-invariant along the \(x'\) directions, we can search for minimisers that share this symmetry, i.e. \(\beta = \beta(x_1, x_2)\). Consequently, we minimise the energy per unit volume along the defect lines, which corresponds to the integrated energy on the transversal section:
\begin{equation} \label{eq: energia farlocca}
    \mathcal{E}^2_{\mathrm{2D}}(U) \coloneqq \int_{\mathbb{R}^2} |\beta(x_1, x_2)|^2\ud x_1 \ud x_2 = \int_{\mathsf{H}^+} |\nabla U^+|^2 \ud x_1 \ud x_2 + \int_{\mathsf{H}^-} |\nabla U^-|^2 \ud x_1 \ud x_2 
\end{equation}
subject to the linear constraint in \cref{eq: rotore_esplicito}, where \(\mathsf{H}^\pm\) and \(\mathsf{H}^0\) are defined as in the previous model.

The structure of the minimiser greatly simplifies thanks to the properties of the energy and the constraint. Because $\beta$ depends only on $x_1$ and $x_2$, all derivatives with respect to $x_k$ (for $k \ge 3$) identically vanish. Looking at \cref{eq: rotore_esplicito}, the condition $\mathrm{curl}(\beta^{(i)})_{jk} = 0$ for $k \ge 3$ and $j \in \{1,2\}$ yields \(\partial_j \beta_{ik} = 0\). This implies that all components in the columns $3, \ldots, d$ of $\beta$ are constant. To yield a finite $L^2$ energy on the cross section, these constants must be zero. Hence, the displacement has no gradient along the defect lines. 

Finally, \cref{eq: rotore_esplicito} shows that the topological constraint only acts on the first row $\beta^{(1)}$. Any non-zero component in the rows $\beta^{(i)}$ for $i \ge 2$ would strictly increase the energy. By the strict convexity of the integrand $|\beta|^2$, the unique minimiser must have $\beta^{(i)} \equiv 0$ for all $i \ge 2$. In this case, the underlying displacement vector simplifies to a purely scalar planar shifts: $U = (U_1(x_1, x_2), 0, \dots, 0)$.

\begin{remark}
    If we consider the linear isotropic elastic energy (as in \cref{eq: 2D energia elastica}), the symmetric strain tensor $e(\beta) = \frac{1}{2}(\beta + \beta^\mathrm{T})$ couples the first and second rows (i.e., $\beta_{12}$ and $\beta_{21}$). Consequently, setting $\beta_{21} = 0$ would break the symmetry of $e_{12}$. Thus, $\beta_2$ (and the vertical displacement $U_2$) does not vanish.
\end{remark}

Once again we observe that minimising the energy in \cref{eq: energia farlocca}, with trace \(U^\pm|_{\mathsf{H}^0}=u^\pm(x_1)\in H^{\frac{1}{2}}(\mathsf{H}^0\setminus P)\) leads to the following Euler-Lagrange equations:
\begin{equation} \label{eq: EL farlocco}
    \begin{cases}
    \Delta U =0 \quad & \text{on }\mathsf{H}^\pm \\
    U^\pm=u^\pm \quad & \text{on }\mathsf{H}^0.
    \end{cases}
\end{equation}
Following the same modelling assumptions introduced for linear elasticity, we impose a mirror anti-symmetry with respect to the slip plane \(\mathsf{H}^0\), such that $U_1(x_1, x_2) = -U_1(x_1, -x_2)$. By the assumed symmetry, $u^+(x_1) = -u^-(x_1)$. The total slip (or displacement jump) across the interface is given by $u(x_1) \coloneqq u^+(x_1) - u^-(x_1) = 2u^\pm(x_1)$. By a trivial integration by parts, using \cref{eq: EL farlocco}, we write the energy as 
\[
    \mathcal{E}^2_{\mathrm{2D}}(U) = \int_{\mathsf{H}^0} u^+\partial_{\nu^+} U^+ \ud x_1 + \int_{\mathsf{H}^0} u^-\partial_{\nu^-} U^- \ud x_1,
\]
where \(\nu^\pm = \mp e_2\). Observe that, given the hypothesis on \(u\), the energy above is not well-defined. To overcome this issue, we can apply a standard regularisation procedure as we will do in \Cref{sez: minimi rigidi} for the critical and super-critical cases.
Provided this additional hypothesis, we can take \(U^\pm \in H^1(\mathsf{H}^\pm)\) and \(u \in H^{\frac{1}{2}}(\mathsf{H}^0)\). Observe that \(\partial_\nu U\) is the Dirichlet-to-Neumann map as claimed in \cite{Caffarelli07}, therefore using the symmetry assumptions on \(u^\pm\) we get the Gagliardo seminorm energy:
\[
    \mathcal{E}^2_{\mathrm{2D}}(U) = 2\int_{\R}\int_{\R} \frac{|u(x)-u(y)|^2}{|x-y|^2} \,\ud y \ud x.
\]

\begin{remark}
	If we instead take \(U\in W^{1,p}(\R^d\setminus M,\R^d)\) and the energy 
	\[
		\mathcal{E}^{p}_{d\mathrm{D}}(U)=\int_{\R^d} |\beta|^p \ud x,
	\]
	once again we can reduce our attention to the 2-dimensional energy density and \(U(x_1,x_2)\) solves 
	\[
		\begin{cases}
			(-\Delta)_p U =0 \quad & \text{on }\mathsf{H}^\pm \\
			U^\pm=u^\pm \quad & \text{on }\mathsf{H}^0,
		\end{cases}
	\]
	with trace datum \(u^\pm\in W^{1-\frac{1}{p},p}(\R\setminus P)\). Alas, we observe that for \(p\neq 2\) the trace operator is not an isometric operator (up to a constant, with respect to the Sobolev seminorm) on \(p\)-harmonic extensions of \(W^{1-\frac{1}{p},p}\) boundary data on the half-plane. The trace energy and \(\mathcal{E}^p_{2\mathrm{D}}\) are, in general, only equivalent.
\end{remark}

As for the elastic energy, one can consider displacements \(U^\pm\in H^1([0,T]\times \R^\pm \times \R^{d-2}\setminus M,\R^d)\) where, in this setting, the defects manifold is \(M=P\times \R^{d-2}\) and \(P\) is \(T\)-periodic. Then, with the same argument as before the displacement is independent of the last \(d-2\) variables and \(U=(U_1(x_1,x_2),0,\ldots,0)\). Now we consider a trace \(u\in H^{\frac{1}{2}}_{\mathrm{loc}}(\R\setminus P)\) \(T\)-periodic, and \(U\) the unique minimiser of the energy (finite up to regularisation)
\[
	\mathcal{E}^{2}_{T}(U) \coloneqq \int_{[0,T]\times\R} |\nabla U|^2 \ud x,
\]
among the functions \(V\in H^1([0,T]\times\R \setminus M,\R^2)\) with \(V|_{[0,T]\times\{0\}}=u\) and \(V(0,\cdot)=V(T,\cdot)\) in the sense of traces. Now let \(\tilde U\) be the \(Te_1\)-periodic extension  of \(U\), then \(\tilde U\) solves \cref{eq: EL farlocco} and, by similar computations, using periodicity we get

\[
	\mathcal{E}^{2}_T(\tilde U) = C(T) \F^\frac{1}{2}_2(u).
\]
\subsection{Configurations and rigid variations}
As previously discussed the interface is characterised by an elastic part, where the distributional derivative of $u$ is constant, say 1, and a plastic part, corresponding to dislocations, that are, by their discrete nature, quantised. Assuming a macroscopic periodicity $T \in \N$, we require the presence of exactly $T$ dislocations within the period $[0,T)$. Our problem therefore consists in minimising the energy $\mathcal{F}^s_p$ among $T$-periodic functions $u$ whose distributional derivative satisfy the constraint
\begin{equation} \label{eq: condizione rigidità}
    \mathrm{D}u = \mathcal{L}^1 - \sum_{i=1}^{T}\delta_{x_i}, \qquad \text{in } [0,T).
\end{equation}

Two important comments are in order. First, due to the presence of the Dirac deltas, $\mathcal{F}^s_p(u) = +\infty$ whenever $sp \geq 1$. Consequently, our analysis will initially focus on the sub-critical regime $sp < 1$, where the energy is finite. The critical and super-critical cases ($sp \geq 1$) will be addressed later through a suitable regularisation procedure. Second, the minimality of the 1-periodic configuration (i.e., with equispaced singularities) has already been established in \cite{Goldman25} for the physical case $p=2$ and \(s\in (0, \frac{1}{2})\). In this paper, we extend it to the full range $p \in [1, +\infty)$ and \(s \in (0,\frac{1}{p}]\), and to \(s\in (\frac{1}{p},1)\) adding further hypotheses. Ultimately, we will show that the equispaced configuration is the unique minimiser (up to translations), demonstrating in particular that the artificial $T$-periodicity assumption does not play any role.

Since any function $u$ satisfying \cref{eq: condizione rigidità} is entirely determined, up to an additive constant, by the position of its singularities $x_i$, the energy $\mathcal{F}^s_p$ is invariant under vertical translations of $u$ and depends exclusively on the position of these points. 

To begin, we need to define the proper space in which we set our minimisation problem.

\begin{definition}[Configuration] \label{def: config}
Let \(T\in\N\). We define the space of \(T\)-periodic \emph{configurations} as 
\begin{equation*}
    \X_T\coloneqq \left\{\bold{X}= \bigcup_{k\in\Z}\left(\bold{\bar{X}}+kT\right) \;\left|\;\bold{\bar{X}}=(x_1,\ldots,x_T)\in[0,T)^T,\,x_i\leq x_j \text{ if } i<j\right.\right\}.
\end{equation*}
Furthermore, we call \(0\leq \ud[\bold{X}]\coloneqq \min\{|x_i-x_j| \,:\, x_i,x_j\in\bold{X}\}\), the minimum distance between two points in \(\bold X\), and we say that \(\bold{X}\) is a \(l\)-\emph{regular configuration}, for some \(l\in(0,1]\), if \(l\leq \ud[\bold X]\). We denote the space of such configurations as \(\X^l_T\). Moreover, we will denote by \(\X^0_T\coloneqq \bigcup_{l>0}\X^l_T\) the space of configurations with non-overlapping jumps (denoted simply by \emph{regular configurations}).
\end{definition}

\begin{remark} 
    Let us observe that, by convention, we will index the elements in \(\bold{X}=(x_n)_n\) as follows: \(x_1\) is the first element of \(\bold{\bar X}\) and for every \(n\in\Z\), \(x_n\coloneqq x_{i}+k T\), where \(i\in\{0,\ldots, T-1\}\) and \(k\in\Z\) are the unique numbers such that \(n=i+kT\). The reason why we consider \(x_i\in[0,T)\) instead of \([0,T]\) is to avoid overlapping during the periodicisation process. Indeed, we observe that the identification \(\R/T\Z \cong \mathbb{S}^1 \cong [0,T)\) holds. 
\end{remark}

\begin{definition}[Energy of a configuration]
    Let \(\bold{X}\in\mathbb{X}_T\), we say that the energy of this configuration is 
    \[
    \F^s_p[\bold{X}] \coloneqq \F^s_p(u[\bold{X}]),
    \]
    where \(u[\bold{X}]\) is the \(T\)-periodic function defined (up to vertical translations) by the relation 
    \[
        \mathrm{D}u[\bold{X}]= \mathcal{L}^1-\sum_{x_i\in\bold X} \delta_{x_i} \eqqcolon \mathcal{L}^1 - \mu^{\bold X}.
    \]
\end{definition}
We observe that the energy \(\F^s_p[\bold{X}]=\F^s_p[\bold{X}+a]\), for every \(a\in\R\), where \(\bold{X}+a = (x_n+a)_n\). We also note that the energy is invariant under vertical translations, which implicitly justifies the notation \(\F^s_p[\bold{X}]=\F^s_p(u[\bold{X}])=\F^s_p(u[\bold{X}]+c)\), for every \(c\in\R\).
Our goal will be to prove that the unique minimiser for the functional \(\F^s_p\) in the set \(\X_T\) is the equispaced configuration and that the same holds true for \(\F^0_p\).

\subsection{The sub-critical case: \(sp<1\)}
In this Section, we will begin by considering \textit{regular configurations}, showing that the second variation of the energy is positive definite on them and, in particular, that the equispaced configuration is the only critical point among them. Following this result we will prove that all critical points are indeed regular configurations and therefore the equispaced configuration is the unique global minimiser for the energy. To achieve this goal we need to find a suitable class of variations on the aforementioned class of configurations. The natural choice in the space \(\X_T\) is to move a point \(x_i\in\ \bold X\) by a small quantity \(h>0\). We call this operation \emph{rigid variations}.

\begin{definition}[Rigid variation] \label{def: var rigida}
Let \(0<h< h_0\) be small, \(\bold{X}\in\X^{h_0}_T\) and \(x_i \in \bold X\) be fixed. Let \(I_h\coloneqq [x_i,x_i+h)\) and \(\mathcal{I}_h\coloneqq\bigcup_{k\in\Z} \left(I_h+kT\right)\) the union of the intervals \(I_h\) which have been \(T\)-translated on \(\R\). A rigid variation of \(\bold X\) in the point \(x_i\) is defined as 
\[
	\bold X + h e_i \coloneqq (x_1, \ldots, x_i+h, \ldots, x_T)+kT, \qquad \text{ for all } k \in \Z.
\]
Moreover, we observe that the effect of a variation of \(\bold{X}\) on the corresponding function \(u[\bold{X}]\) is 
\[
u[\bold{X}+he_i](x)= u[\bold{X}](x)+ \rchi_{\I_h}(x).
\]
We can give an appropriate definition also for \(-h_0<h<0\) changing the sign of the characteristic function.
\end{definition}
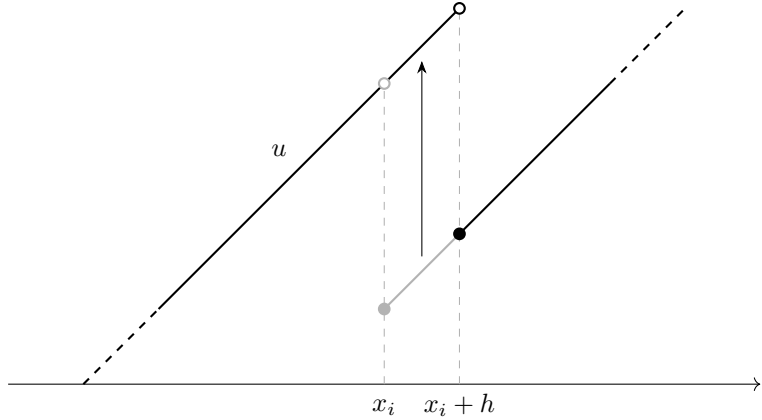
\begin{figure}
	\centering
	\begin{tikzpicture}
		\draw[->] (-5,0)--(5,0);
		
		\draw[dashed,black!30] (0,0) node[black,below=.08cm]{\(x_i\)} -- (0,4);
		\draw[dashed,black!30] (1,0) node[black,below]{\(x_i+h\)} -- (1,5);
		
		\draw[thick] (-3,1)--(1,5) node[pos=.4,above=.3cm]{\(u\)};
		\draw[dashed,thick] (-4,0)--(-3,1);
		\draw[black!30,fill=white,thick] (0,4) circle (2pt);
		\draw[fill=white,thick] (1,5) circle (2pt);
		
		\draw[black!30,thick]	(0,1)--(1,2);
		\draw[thick]	(1,2)--(3,4);
		\draw[dashed,thick]	(3,4)--(4,5);
		\filldraw[black!30,thick] (0,1) circle (2pt);
		\filldraw[thick] (1,2) circle (2pt);

		\draw[arrows={-Stealth}] (.5,1.7) -- (.5,4.3);
	\end{tikzpicture}
    \label{fig: schema variazione rigida}
    \caption{This figure is an example of a rigid variation applied to a function \(u[\bold{X}]\) with a jump in \(x_i\). The effect of the variation on the function is the same as moving the jump from \(x_i\) to \(x_i+h\).}
\end{figure}%
We now compute the first and second variations of the functional with respect to rigid variations. 
\begin{definition}[Rigid \((s,p)\)-Laplacian]
Let \(\bold{X}\in\X_T\) and \(u=u[\bold{X}]\), \(s\in (0,1)\) and \(p \geq 1\). We define the \emph{rigid s-fractional p-Laplacian} as the integral (in the sense of the principal value)
\begin{equation*} \label{def:laplaciano frazionario rigido}
\left(- \D \right)^s_p u(x) \coloneqq 2 \int_\R \frac{\left| u(x)-u(y)+1\right|^p-\left|u(x)-u(y)\right|^p}{\left|x-y\right|^{1+sp}}\, \ud y,
\end{equation*}
which is finite if and only if \(\bold{X}\in\X^0_T\).
\end{definition}

\begin{proposition}[First and second variations for \(\F^s_p\)] \label{prop: rigid variations}
Let \(s \in (0,1)\), \(p\geq 1\). For every \(\bold{X}\in\X_T^0\), and every \(i, j \in \{1,\ldots,T\}\), \(i \neq j\) it holds
\begin{align}
 \partial_{x_i} \F^s_p \left[\bold{X} \right] &= \left(- \D \right)^s_p u(x_i),  \label{eq: var prima s>0}\\
 \partial^2_{x_i x_j} \F^s_p \left[\bold{X} \right] &= 2 \sum_{k\in\Z} \frac{2|u(x_i)-u(x_j)|^p-|u(x_i)-u(x_j)+1|^p-|u(x_i)-u(x_j)-1|^p}{|x_i-x_j-kT|^{1+sp}}, \label{eq: var seconda mista s>0}\\
 \partial^2_{x_i^2}\F^s_p \left[\bold{X} \right] &= -2 \sum_{k\in\Z}\sum_{j\neq i} \frac{2|u(x_i)-u(x_j)|^p-|u(x_i)-u(x_j)+1|^p-|u(x_i)-u(x_j)-1|^p}{|x_i-x_j-kT|^{1+sp}}. \label{eq: var seconda pura s>0}  
\end{align}
\end{proposition}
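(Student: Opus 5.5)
The plan is to compute the difference quotient of the energy directly, using the representation of a rigid variation from \cref{def: var rigida}: $u_h\coloneqq u[\bold X+he_i]=u+\rchi_{\I_h}$. By symmetry of the integrand in $(x,y)$ (after the periodic unfolding used in the proof of \Cref{prop: energy estimates}), write
\[
\F^s_p(u_h)-\F^s_p(u)=\int_{Q_T}\int_{\R}\frac{|u_h(x)-u_h(y)|^p-|u(x)-u(y)|^p}{|x-y|^{1+sp}}\,\ud y\,\ud x .
\]
The integrand vanishes unless exactly one of $x,y$ lies in $\I_h$. Using periodicity to bring the $x$-variable into $I_h$ when $x\in\I_h$, and the symmetry $x\leftrightarrow y$ for the case $y\in\I_h$, the difference equals
\[
2\int_{I_h}\int_{\R\setminus\I_h}\frac{|u(x)-u(y)+1|^p-|u(x)-u(y)|^p}{|x-y|^{1+sp}}\,\ud y\,\ud x .
\]
Dividing by $h$ and letting $h\to0$, the outer average over $I_h$ converges to the value of the inner integral at $x=x_i$. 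This gives \cref{eq: var prima s>0}. Here I use that, since $\bold X\in\X^0_T$, $u$ is affine with slope $1$ near $x_i$ on the right, so $x\mapsto(-\D)^s_pu(x)$ is continuous near $x_i^+$.

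The main obstacle is justifying this limit, because the kernel is singular at $y=x$. The key observation is that for $y$ near $x$ the numerator is not small: $u(x)-u(y)\approx x-y$, so $|x-y+1|^p-|x-y|^p\approx p\,\mathrm{sgn}\cdot$ a bounded quantity. The integral over $\R\setminus\I_h$ with $x\in I_h$ therefore behaves like $\int_{|y-x|>\mathrm{dist}(x,\partial I_h)}|x-y|^{-1-sp}\,\ud y\sim \mathrm{dist}(x,\partial I_h)^{-sp}$. This is integrable in $x$ over $I_h$ because $sp<1$; more generally the expression is read in the principal value sense, as in the definition of $(-\D)^s_p$. I will split the inner integral into a near part $|y-x_i|<\delta$, where $u$ is explicit and affine on each side so everything can be computed by hand, and a far part, where dominated convergence applies directly. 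The near part should contribute $O(\delta^{1-sp})$ uniformly in $h$, which closes the argument.

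For the second variations, I will differentiate \cref{eq: var prima s>0} again with respect to $x_j$, using the representation $u[\bold X+te_j]=u+\rchi_{\I_t^{(j)}}$. For $j\neq i$ the perturbation only affects $u(y)$ for $y$ near $x_j+kT$. The same near/far splitting, now with $x_i$ separated from the points $x_j+kT$ since the configuration is regular, gives the mixed derivative. The contribution is the jump of the integrand across the moved point, namely
\[
2\,\frac{\bigl(|u(x_i)-u(x_j)+1|^p-|u(x_i)-u(x_j)|^p\bigr)-\bigl(|u(x_i)-u(x_j)|^p-|u(x_i)-u(x_j)-1|^p\bigr)}{|x_i-x_j-kT|^{1+sp}},
\]
with the sign fixed by orientation. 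Summing over $k$ yields \cref{eq: var seconda mista s>0}; the sum converges since the numerator is bounded. For the pure derivative, I will use translation invariance, $\F^s_p[\bold X+a]=\F^s_p[\bold X]$, which gives $\sum_j\partial_{x_j}\F^s_p=0$ and hence $\partial_{x_i^2}^2\F^s_p=-\sum_{j\neq i}\partial^2_{x_ix_j}\F^s_p$. This is exactly \cref{eq: var seconda pura s>0}, and it avoids handling the self-interaction singularity directly.
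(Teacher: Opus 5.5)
Your overall architecture is the paper's. You use the same formula $2\int_{I_h}\int_{\R\setminus\mathcal I_h}\cdots$ for the energy difference and a near/far splitting. You get the mixed derivative by differentiating the first-variation formula in $x_j$, which is harmless since $x_i$ stays away from $\mathcal J_h$. You get the pure derivative from the zero row sums, and your translation-invariance justification of that is more explicit than the paper's.

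The gap is in the limit $h\to0^+$ for the first variation. The justification you give, continuity of $x\mapsto(-\D)^s_pu(x)$ near $x_i^+$, is false. At any point $x$ where $u$ is affine, $u(x)-u(y)=x-y$ for $y$ near $x$. So the numerator $|x-y+1|^p-|x-y|^p\to1$ from both sides, and the principal value is $+\infty$. The rigid Laplacian is finite only at jump points, precisely because there the numerator tends to $-1$ from the left and to $+1$ from the right.

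For the same reason, your near part cannot be controlled by the absolute estimate you describe. We have $\int_{I_h}\mathrm{dist}(x,\partial I_h)^{-sp}\,\ud x\sim h^{1-sp}$, which after division by $h$ gives $h^{-sp}\to+\infty$. That is not $O(\delta^{1-sp})$ uniformly in $h$.

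What is missing is the cancellation between the two endpoints of $I_h$. The singular contribution from $x\downarrow x_i$, $y\uparrow x_i$ (numerator $\to-1$) is the exact mirror image, under reflection about the midpoint $\xi_h=x_i+\frac h2$, of the one from $x\uparrow x_i+h$, $y\downarrow x_i+h$ (numerator $\to+1$). The paper builds this in by centring the near region at $\xi_h$. Then $A_2^h=0$ identically for every $\sigma>\frac h2$, and only the far part survives. The mean value theorem gives its limit for fixed $\sigma$, and $\sigma\to0^+$ yields the principal value.

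If you keep the ball centred at $x_i$, the computation by hand also works, but only through the same symmetry. With $\phi(r)\coloneqq((1-r)^p-r^p)\,r^{-1-sp}$ and $\Phi'=\phi$, the near part equals
\[
\frac2h\int_0^h\bigl[\Phi(x)-\Phi(h-x)+\Phi(\delta-x)-\Phi(\delta+x)\bigr]\,\ud x .
\]
The first two terms cancel exactly; both are integrable since $sp<1$. The remainder is $O(h\,\delta^{-1-sp})\to0$. This cancellation is the step you must state and prove.

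Finally, fix the sign in your mixed-derivative display. Moving $x_j$ to the right replaces, on $[x_j,x_j+h)$, the right limit of the integrand by its left limit. So the derivative is minus the jump, and the numerator is $2|a|^p-|a+1|^p-|a-1|^p$ with $a=u(x_i)-u(x_j)$, not its negative. This sign is what later makes the off-diagonal Hessian entries negative.
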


\begin{proof}
Let us fix \(h_0>0\) so that \(\bold{X}\in\X_T^{h_0}\), and \(0<h<h_0\) (for symmetry reasons, it is enough to prove the result for \(h>0\)). We compute 
\begin{align*}
     \F^s_p\left[\bold{X}+he_i\right]-\F^s_p\left[\bold{X}\right]=& \int_0^T \int_\R \frac{\left\lvert u(x)-u(y)+\rchi_{I_h}(x)-\rchi_{\I_h}(y)\right\rvert^p - \left\lvert u(x)-u(y) \right\lvert^p}{|x-y|^{1+sp}}\, \ud y \ud x \\
     =& \int_{I_h} \int_{\R\backslash \I_h} \frac{\left\lvert u(x)-u(y)+1 \right\rvert^p -\left\lvert u(x)-u(y) \right\lvert^p}{|x-y|^{1+sp}} \, \ud y \ud x \\
     &+ \int_{[0,T]\backslash I_h} \int_{\I_h} \frac{\left\lvert u(x)-u(y)- 1 \right\rvert^p -\left\lvert u(x)-u(y) \right\lvert^p }{|x-y|^{1+sp}} \,\ud y \ud x\\
     =& 2 \int_{I_h} \int_{\R\backslash \I_h} \frac{\left\lvert u(x)-u(y)+1 \right\rvert^p -\left\lvert u(x)-u(y) \right\lvert^p}{|x-y|^{1+sp}} \,\ud y \ud x.
\end{align*}
where in the last equality we have used the periodicity of \(u\), the symmetry of the domains and Tonelli's Theorem.
Now, we want to take the limit sending \(h \to 0^+\) and in order to do that, we take \(\sigma>0\) such that \(\sigma> \frac{h}{2} \) and we define \(\I^*_h\coloneqq \I_h\backslash I_h\). Moreover, let us now take \(\xi_h=x_i+\frac{h}{2}\) so that \(I_h= B_{\frac{h}{2}}(\xi_h)\). Therefore we get 
\begin{align*}
	  \F^s_p\left[\bold{X}+he_i\right] -\F^s_p\left[\bold{X}\right] =& 2 \int_{I_h} \int_{B_\sigma^c(\xi_h)\backslash \I^*_h} \frac{\left\lvert u(x)-u(y)+1 \right\rvert^p -\left\lvert u(x)-u(y) \right\lvert^p}{|x-y|^{1+sp}} \, \ud y \ud x \\ 
    &+2 \int_{I_h} \int_{B_\sigma(\xi_h)\backslash I_h} \frac{\left\lvert u(x)-u(y)+1 \right\rvert^p -\left\lvert u(x)-u(y) \right\lvert^p}{|x-y|^{1+sp}}\, \ud y \ud x \\
    \eqqcolon& A_1^h + A_2^h.
\end{align*} 
We show immediately that \(A_2^h\) is identically zero:
\begin{align*}
    A_2^h =& 2\int_{I_h} \int_{B_\sigma(\xi_h)\backslash B_{\frac{h}{2}}(\xi_h)} \frac{\left\lvert u(x)-u(y)+1 \right\rvert^p -\left\lvert u(x)-u(y) \right\lvert^p}{|x-y|^{1+sp}}\, \ud y \ud x\\
    =&2 \int_{\xi_h-\frac{h}{2}}^{\xi_h+\frac{h}{2}} \int_{\xi_h-\sigma}^{\xi_h-\frac{h}{2}} \frac{\left\lvert x-y\right\rvert^p - \left\lvert x-y-1\right\rvert^p}{\left\lvert x-y \right\rvert^{1+sp}}\,\ud y \ud x
    +2 \int_{\xi_h-\frac{h}{2}}^{\xi_h+\frac{h}{2}} \int_{\xi_h+\frac{h}{2}}^{\xi_h+\sigma} \frac{\left\lvert x-y+1\right\rvert^p - \left\lvert x-y\right\rvert^p}{\left\lvert x-y \right\rvert^{1+sp}}\,\ud y \ud x\\
    =&2 \int_{-\frac{h}{2}}^{\frac{h}{2}} \int_{-\sigma}^{-\frac{h}{2}} \frac{\left\lvert x-y\right\rvert^p - \left\lvert x-y-1\right\rvert^p}{\left\lvert x-y \right\rvert^{1+sp}}\,\ud y \ud x
    +2 \int_{-\frac{h}{2}}^{\frac{h}{2}} \int_{\frac{h}{2}}^{\sigma} \frac{\left\lvert x-y+1\right\rvert^p - \left\lvert x-y\right\rvert^p}{\left\lvert x-y \right\rvert^{1+sp}}\,\ud y \ud x= 0,
 \end{align*}
 where we used the changes of variable \(x'=x-\xi_h\) and \(y'=y-\xi_h\) in the second equality, and the changes of variable \(x'=-x\) and \(y'=-y\) in the last equality. Moreover, in the first equality we have used that
 \begin{align*}
 &u(x)-u(y)= x-y-1 \qquad &&\text{ whenever } x\in I_h \text{ and } y \in\left(\xi_h-\sigma, \xi_h - \frac{h}{2}\right),\\
 &u(x)-u(y) =x-y \qquad &&\text{ whenever } x\in I_h \text{ and } y \in\left(\xi_h + \frac{h}{2}, \xi_h+\sigma\right).
 \end{align*}
Now, taking the limit as \(h\to0\), we compute
\begin{equation*}
        \lim_{\substack{h \to 0^+}} \frac{\F^s_p\left[\bold{X}+he_i\right]- \F^s_p\left[\bold{X}\right]}{h} = \lim_{\substack{\sigma \to 0^+}} 2 \int_{\R\backslash B_\sigma(x_i)} \frac{\left\lvert u(x_i)-u(y)+1 \right\rvert^p -\left\lvert u(x_i)-u(y) \right\lvert^p}{|x_i-y|^{1+sp}} \, \ud y,
\end{equation*}
where on \(\frac{1}{h}A_1^h\) we used the Mean Value Theorem as \(h \to 0^+\), arriving to \cref{eq: var prima s>0}.
\begin{figure}
	\centering
	\begin{tikzpicture}
		\draw[->] (-5,0)--(6,0);
		
		\draw[dashed,black!30] (0,0) -- (0,4);
		
		\draw[dashed,black!30] (2,0) -- (2,-1);
		
		\draw[thick] (-3,1)--(0,4) node[pos=.5,above=.3cm]{\(u\)};
		\draw[dashed,thick] (-4,0)--(-3,1);
		\draw[fill=white,thick] (0,4) circle (2pt);
		
		\draw[thick]	(0,1)--(3,4);
		\draw[dashed,thick]	(3,4)--(4,5);
		\filldraw[thick] (0,1) circle (2pt);
		
		\draw[thick,arrows={Bar[width=.2cm]-Arc Barb[length=.2cm,arc=90]}] (0,0)node[below=.07cm]{\(x_i\)}--(4,0)node[below]{\(x_i+h\)} node[pos=.8,above=.05cm]{\(I_h\)}; 
		
		\filldraw (2,0) circle (1pt) node[above]{\(\xi_h\)};
		
		\draw[black, arrows =  {Arc Barb[length=.3cm,arc=90]-Arc Barb[length=.3cm,arc=90]}] (-1,-1) -- (5,-1) node[below, pos=.5]{\(B_{\sigma}(\xi_h)\)}; 
	\end{tikzpicture}
    \label{fig: insiemi conto variaizone prima} 
    \caption{The objects in the proof of \Cref{prop: rigid variations}.}
\end{figure}%

Now, we want to prove \cref{eq: var seconda mista s>0}. To do so, we consider a rigid variation in the direction of \(e_j\) where \(i \neq j\); that is, we take the interval \(J_h=[x_j,x_j+h)\), the set \(\J_h=\bigcup_{k\in\Z}(J_h+kT)\) and the function \(\rchi_{\J_h}\). We get
\begin{align*}
    \partial_{x_i}\F^s_p\left[\bold{X}+h e_j\right]=& 2\int_\R \frac{|u(x_i)-u(y)-\rchi_{\J_h}(y)+1|^p-|u(x_i)-u(y)-\rchi_{\J_h}(y)|^p}{|x_i-y|^{1+sp}}\, \ud y\\
    =& 2 \sum_{k\in\Z} \int_0^T \frac{|u(x_i)-u(y)-\rchi_{J_h}(y)+1|^p-|u(x_i)-u(y)-\rchi_{J_h}(y)|^p}{|x_i-y-kT|^{1+sp}}\, \ud y\\
    =&2 \sum_{k\in\Z} \int_{J_h} \frac{|u(x_i)-u(y)|^p-|u(x_i)-u(y)-1|^p}{|x_i-y-kT|^{1+sp}}\, \ud y\\
    &+ 2\sum_{k\in\Z} \int_{J_h^c\cap[0,T]} \frac{|u(x_i)-u(y)+1|^p-|u(x_i)-u(y)|^p}{|x_i-y-kT|^{1+sp}}\, \ud y.
\end{align*}
Writing now the difference quotient, we get
\begin{align*}
    &\frac{\partial_{x_i}\F^s_p\left[\bold{X}+h e_j\right]-\partial_{x_i}\F^s_p\left[\bold{X}\right]}{h} \\
   =&\frac{2}{h} \sum_{k\in\Z} \int_{J_h} \frac{2|u(x_i)-u(y)|^p-|u(x_i)-u(y)+1|^p-|u(x_i)-u(y)-1|^p}{|x_i-y-kT|^{1+sp}} \,\ud y.
\end{align*}
Taking the limit for \(h\to 0\) we finally get 
\[
\partial^2_{x_i x_j}\F^s_p\left[\bold{X}\right] =2 \sum_{k\in\Z} \frac{2|u(x_i)-u(x_j)|^p-|u(x_i)-u(x_j)+1|^p-|u(x_i)-u(x_j)-1|^p}{|x_i-x_j-kT|^{1+sp}}.
\]
As for the \cref{eq: var seconda pura s>0}, it is trivially implied by the following property
\begin{equation} \label{eq: somma zero hessiana}
\partial^2_{x_i^2}\F^s_p\left[\bold{X}\right] = -\sum_{j\neq i} \partial^2_{x_i x_j}\F^s_p\left[\bold{X}\right].
\end{equation}
\end{proof}

\begin{remark} \label{rmk: somma zero}
Taking the matrix \(F\left[\bold{X} \right] \coloneqq (F_{ij}\left[\bold{X}\right])_{i,j}\), where \(F_{ij} \left[ \bold{X} \right] \coloneqq \partial^2_{x_i x_j}\F^s_p\left[\bold{X}\right]\), we observe that from the property in \cref{eq: somma zero hessiana}, it follows that the sum of all the elements on a row or a column of this matrix is zero. 
\end{remark} 

Up to this point, we have only considered regular configurations; it is our final goal to show that critical points are always found in this class of configurations. To do so, we now show that it is always convenient, in order to lower the energy, to separate overlapping jumps.
\begin{lemma} \label{lemma: cuspidi}
	Let \(s\in(0,1)\) and \(p>1\) such that \(sp<1\). Let \(\bold{X}\in\X_T\) with \(1<m_i\coloneqq \#\{j\in\{1,\ldots,T\}\,|\, x_j=x_i\}\) the multiplicity of the jump in \(x_i\), for some \(i\in\{1,\ldots,T\}\). Then
	\[
	\partial^{\pm}_{x_i} \F^s_p[\bold{X}] =\mp \infty.
	\]
\end{lemma}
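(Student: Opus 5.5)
The plan is to repeat the difference-quotient computation from the proof of \Cref{prop: rigid variations}, now at a point of multiplicity \(m_i\ge 2\). The symmetric cancellation that made \(A_2^h\) vanish breaks down there, and what is left over has order \(h^{1-sp}\) with a definite sign.

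\emph{Setup.} Since \(\bold X\) is invariant under relabelling coincident points, I may take \(x_i\) to be the last copy in the cluster for \(h>0\) (moving it right) and the first copy for \(h<0\) (moving it left). I fix \(\sigma>0\) smaller than the distance from \(x_i\) to every point of \(\bold X\) different from \(x_i\). Near \(x_i\), \(u=u[\bold X]\) is then affine with slope \(1\) and has a single downward jump of size \(m\coloneqq m_i\) at \(x_i\). The first identity in the proof of \Cref{prop: rigid variations} does not use regularity, so it still gives
\[
\F^s_p[\bold X+he_i]-\F^s_p[\bold X]=2\int_{I_h}\int_{\R\setminus\I_h}\frac{|u(x)-u(y)+1|^p-|u(x)-u(y)|^p}{|x-y|^{1+sp}}\,\ud y\,\ud x .
\]
I split this as \(A_1^h+A_2^h\), exactly as before.

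\emph{Estimating the two pieces.} In \(A_1^h\) we have \(|x-y|\ge \sigma/2\), so the integrand is bounded, with only polynomial growth at infinity that is dominated by the kernel. Hence \(A_1^h=O(h)\) and \(A_1^h/h\) stays bounded. For \(A_2^h\), take \(x\in I_h=[x_i,x_i+h)\) and \(y\) near \(x_i\).
\begin{itemize}
\item If \(y>x_i+h\), then \(u(x)-u(y)=x-y\), and the numerator is \(|x-y+1|^p-|x-y|^p=1+O(|x-y|)\).
\item If \(y<x_i\), then \(u(x)-u(y)=x-y-m\), and the numerator is \(|x-y-m+1|^p-|x-y-m|^p=(m-1)^p-m^p+O(|x-y|)\).
\end{itemize}
The \(O(|x-y|)\) errors multiplied by \(|x-y|^{-1-sp}\) are integrable, because \(sp<1\), so they contribute \(O(h)\). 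The main terms give
\[
A_2^h=2\bigl(1+(m-1)^p-m^p\bigr)\int_0^h\!\!\int_{h}^{\sigma}(y-x)^{-1-sp}\,\ud y\,\ud x+O(h),
\]
using the reflection symmetry \(x\mapsto -x\), \(y\mapsto -y\) already exploited for \(A_2^h\). The double integral equals \(c_{s,p}h^{1-sp}+O(h)\) for some \(c_{s,p}>0\).

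\emph{Conclusion.} For \(p>1\) and \(m\ge2\), strict superadditivity of \(t\mapsto t^p\) gives \(m^p>(m-1)^p+1\). The coefficient is therefore strictly negative, and \(\bigl(\F^s_p[\bold X+he_i]-\F^s_p[\bold X]\bigr)/h\sim -C h^{-sp}\to-\infty\) as \(h\to0^+\). For \(h<0\), the mirrored computation (moving the first copy left) again produces an energy decrease of order \(|h|^{1-sp}\). Dividing by \(h<0\) gives \(+\infty\), which is the claim \(\partial^\pm_{x_i}\F^s_p[\bold X]=\mp\infty\).

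The main obstacle is the bookkeeping in \(A_2^h\). The region \(B_\sigma(\xi_h)\setminus I_h\) is not symmetric about \(x_i\) once the jump of size \(m\) sits at \(x_i\), and one has to verify that the affine corrections only produce \(O(h)\) terms while the leading term really has order \(h^{1-sp}\). The cleanest way is to integrate the explicit kernel exactly over the two sides: \(y\in(x_i-\sigma,x_i)\) and \(y\in(x_i+h,x_i+\sigma)\).
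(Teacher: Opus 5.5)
Your proposal is correct and follows the paper's proof essentially step by step: the far-field part \(A_1^h\) is \(O(h)\), while the near-field part \(A_2^h\) splits into a left piece and a right piece contributing \((m_i-1)^p-m_i^p\) and \(1\) respectively, each times \(\frac{2h^{1-sp}}{sp(1-sp)}\), and strict superadditivity of \(t\mapsto t^p\) then gives the right derivative \(-\infty\). Your mirrored treatment of \(h<0\) spells out what the paper leaves implicit, and your \(O(h)\) remainder is what the computation actually yields (the paper writes \(O(h^{2-sp})\)); it is still \(o(h^{1-sp})\), so the conclusion is unaffected.
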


\begin{proof}
	Recalling the computations in the proof of \Cref{prop: rigid variations}, we have
	
\begin{align*}
	& \F^s_p\left[\bold{X}+he_i\right] -\F^s_p\left[\bold{X}\right] \\
	=& 2 \int_{I_h} \int_{B_\sigma^c(x)\backslash \I^*_h} \frac{\left\lvert u(x)-u(y)+1 \right\rvert^p -\left\lvert u(x)-u(y) \right\lvert^p}{|x-y|^{1+sp}} \,\ud y \ud x \\
	&+2 \int_{I_h} \int_{B_\sigma(x)\backslash I_h} \frac{\left\lvert u(x)-u(y)+1 \right\rvert^p -\left\lvert u(x)-u(y) \right\lvert^p}{|x-y|^{1+sp}} \,\ud y \ud x \\
	=& 2\int_{I_h} \int_{B_\sigma^c(x)\backslash \I^*_h} \frac{\left\lvert u(x)-u(y)+1 \right\rvert^p -\left\lvert u(x)-u(y) \right\lvert^p}{|x-y|^{1+sp}} \,\ud y \ud x  \\
	&+2 \int_{I_h} \int_{B_{\sigma}(\xi_h)\setminus B_{\frac{h}{2}}(\xi_h)}  \frac{\left\lvert u(x)-u(y)+1 \right\rvert^p -\left\lvert u(x)-u(y) \right\lvert^p}{|x-y|^{1+sp}} \,\ud y \ud x \\
	=& A_2^h + O(h).
\end{align*} 
Computing \(A_2^h\) we get to 
\begin{align*}
		A_2^h=& 2\int_{I_h}\int_{\xi_h- \sigma}^{\xi_h-\frac{h}{2}} \frac{|x-y+1-m_i|^p-|x-y-m_i|^p}{|x-y|^{1+sp}}\, \ud y \ud x \\
		&+ 2\int_{I_h}\int_{\xi_h+\frac{h}{2}}^{\xi_h+\sigma} \frac{|x-y+1|^p-|x-y|^p}{|x-y|^{1+sp}}\, \ud y \ud x \\
        \eqqcolon& B^h+C^h.
\end{align*}
We observe that, given the integration intervals in \(B_h\), we have that \(x-y>0\), and using the Taylor expansion of the functions \(f(t)=(m_i-1-t)^p\) and \(g(t)=(m_i-t)^p\) for \(|t|<m_i-1\), we easily get
\begin{align*}
B^h=&2\int_{I_h}\int_{\xi_h- \sigma}^{\xi_h-\frac{h}{2}} \frac{(m_i-1-(x-y))^p-(m_i-(x-y))^p}{(x-y)^{1+sp}}\, \ud y \ud x \\
=& 2\int_{I_h}\int_{\xi_h-\sigma}^{\xi_h-\frac{h}{2}} \frac{\left((m_i-1)^p-m_i^p\right)+O(x-y)}{(x-y)^{1+sp}} \ud y \ud x \\
=&\left((m_i-1)^p-m_i^p\right)\frac{2h^{1-sp}}{sp(1-sp)} + O(h^{2-sp}).
\end{align*}
For the second integral we observe that \(y-x>0\) and we use the same Taylor expansion to get
\begin{align*}
	C^h=&2\int_{I_h}\int_{\xi_h+\frac{h}{2}}^{\xi_h+ \sigma} \frac{(1-(y-x))^p-(y-x)^p}{(y-x)^{1+sp}}\, \ud y \ud x \\
	=& 2\int_{I_h}\int_{\xi_h+\frac{h}{2}}^{\xi_h+ \sigma} \frac{1+O(y-x)}{(y-x)^{1+sp}}\, \ud y \ud x \\
	=&  \frac{2h^{1-sp}}{sp(1-sp)} +O(h^{2-sp}).
\end{align*} 
Combining the two integrals we get 
\[
	A_2^h= \left((m_i-1)^p-m_i^p+1\right)\frac{2h^{1-sp}}{sp(1-sp)} +O(h^{2-sp}).
\]
Using the inequality \((a-1)^p-(a^p-1)<0\), true for \(p,a>1\), we conclude the proof.
	
\end{proof}

Observe that this proof does not work for \(p=1\). Indeed, the first variation of the energy is finite for \(h\to0\), because the coefficient of the leading-order term identically vanishes. In the next Lemma we will show that the energy is lowered when you separate a jump of a small enough quantity.

\begin{lemma} \label{lemma: cuspidi p=1}
	Let \(s\in(0,1)\), \(p=1\) and let \(\bold{X}\in\X_T\) with multiplicity \(m_i>1\) in \(x_i\), for some \(i\in\{1,\ldots,T\}\). Then there exists \(\bold X'\in\X_T\) with multiplicity \(m_i-1\) in \(x_i\) such that 
	\[
	\F^s_1[\bold X] > \F^s_1[\bold X'].
	\]
\end{lemma}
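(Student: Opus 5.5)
The plan is to separate one of the $m_i$ coincident jumps by a small rigid variation, either to the right or to the left, and to show that at least one of the two directions decreases $\F^s_1$ to first order in $h$. Let $u=u[\bold X]$ and write $u(x_i^\pm)$ for the one-sided traces, so that $u(x_i^-)=u(x_i^+)+m_i$.

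\textbf{Step 1: the two difference quotients.} For $h>0$ small, moving one jump from $x_i$ to $x_i+h$ gives $u[\bold X+he_i]=u+\rchi_{\I_h}$. Moving it to $x_i-h$ instead gives $u-\rchi_{\I'_h}$, where $\I'_h$ is the periodisation of $I'_h\coloneqq[x_i-h,x_i)$. The computation at the start of the proof of \Cref{prop: rigid variations} uses only periodicity and Tonelli, so it holds verbatim for $p=1$ and for non-regular configurations. It gives
\[
\F^s_1[\bold X\pm he_i]-\F^s_1[\bold X]=2\int_{I^\pm_h}\int_{\R\setminus \I^\pm_h}\frac{\bigl|u(x)-u(y)\pm1\bigr|-\bigl|u(x)-u(y)\bigr|}{|x-y|^{1+s}}\,\ud y\,\ud x ,
\]
where $I^+_h\coloneqq I_h$ and $I^-_h\coloneqq I'_h$. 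Set $f(t)\coloneqq|t+1|-|t|$ and $a(x,y)\coloneqq u(x)-u(y)$. The integrand on the right is $f(a)$ for the right move and $-f(a-1)$ for the left move. For $x\in I'_h$, the value $u(x)$ equals $u(x_i^-)+O(h)$, so $a$ is shifted by $m_i$ compared with $x\in I_h$.

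\textbf{Step 2: splitting into far and near parts.} As in \Cref{lemma: cuspidi}, split $y$ into $|y-x_i|\ge\sigma$ and $|y-x_i|<\sigma$, with $\sigma$ fixed and smaller than the distance from $x_i$ to the other jump points.

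On the far part, $f$ is bounded and Lipschitz, so dividing by $h$ and letting $h\to0$ produces finite limits. For the right and left moves these limits are $2\int_{|y-x_i|\ge\sigma}f(a_+(y))|x_i-y|^{-1-s}\,\ud y$ and $-2\int_{|y-x_i|\ge\sigma}f(a_+(y)+m_i-1)|x_i-y|^{-1-s}\,\ud y$ respectively, where $a_+(y)\coloneqq u(x_i^+)-u(y)$. Since $f$ is nondecreasing and $m_i-1\ge1$, we have $f(a_+)-f(a_++m_i-1)\le0$ pointwise. Hence the sum of the two far limits is nonpositive.

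The near part is computed exactly, as for $B^h$ and $C^h$ in \Cref{lemma: cuspidi}, but with $p=1$.
\begin{itemize}
\item For the right move with $y<x$, the integrand is $|x-y+1-m_i|-|x-y-m_i|=-1$.
\item For the right move with $y>x$, the integrand is $f(x-y)=1-2(y-x)$.
\end{itemize}
By the reflection symmetry around $\xi_h$ used for $A_2^h$, the constant parts $\pm1$ cancel. What remains is $-4\int_{I_h}\int_{\xi_h+h/2}^{\xi_h+\sigma}(y-x)^{-s}\,\ud y\,\ud x=-\frac{4h\sigma^{1-s}}{1-s}+o(h)$. The left move is handled symmetrically, with roles of the two sides exchanged, and gives a nonpositive contribution of the same order.

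\textbf{Step 3: conclusion and main obstacle.} Adding the two one-sided rates, $\lim_{h\to0^+}h^{-1}\bigl(\F^s_1[\bold X+he_i]+\F^s_1[\bold X-he_i]-2\F^s_1[\bold X]\bigr)$ is strictly negative. Consequently at least one of the two one-sided rates is strictly negative. Choosing that direction and $h$ small, the configuration $\bold X'=\bold X\pm he_i$ has multiplicity $m_i-1$ at $x_i$ and satisfies $\F^s_1[\bold X']<\F^s_1[\bold X]$.

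The step I expect to be delicate is the near-field computation for the left move. The traces of $u$ and the signs of $a\pm1$ must be tracked carefully to confirm that the $\pm1$ constants cancel again and that the leftover term is nonpositive. If that term turns out to carry the wrong sign, I would fall back on the strict inequality $f(a_+)<f(a_++m_i-1)$, which holds on a set of $y$ of positive measure near $x_i$. That gives strict negativity of the far sum for every small $\sigma$. I would then let $\sigma\to0$, using that the near contributions are $O(h\sigma^{1-s})$.
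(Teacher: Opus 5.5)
Your argument is correct and is essentially the paper's. Both compute the two one-sided first variations $\Delta^\pm(h)=hF^\pm+o(h)$ and show $F^++F^-<0$ from the monotonicity of $t\mapsto|t+1|-|t|$. The only difference is bookkeeping: you isolate the singularity with a fixed cut-off $\sigma$ instead of the paper's principal-value argument, so your strictness comes from the explicit near-field term rather than from strict monotonicity on a set of positive measure. Take also $\sigma<1$, so that the near-field formula $1-2|x-y|$ holds on the whole near region.

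The left-move near field you flagged does work out. For $x\in[x_i-h,x_i)$ the integrand $|u(x)-u(y)-1|-|u(x)-u(y)|$ equals $-1$ for $y\in(x_i,x_i+\sigma)$ and $1-2(x-y)$ for $y\in(x_i-\sigma,x_i-h)$. This again gives $-\frac{4h\sigma^{1-s}}{1-s}+o(h)$, so no fallback is needed.
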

\begin{proof}
	Without loss of generality let us take \(x_i=0\) and \(u[\bold X](0)=0\). With a slight abuse of notation, denote by \(\bold X^+ \coloneqq \bold X + h e_i\) and \(\bold X^- \coloneqq \bold X - h e_i\), for some small \(h>0\), and let \(u^\pm=u[\bold X^\pm]\).
	
	We start by analysing the right variation \(\bold{X}^+\). Let \(I_h^+ \coloneqq (0,h)\) and \(\I_h^+\) be the union of its \(T\)-translated intervals. Taking the difference of the energies, manipulating the domains using periodicity, and applying the change of variables \(y=t+x\), we get
	\begin{align*}
		\Delta^+(h) &\coloneqq \F^s_1[\bold X^+] - \F^s_1[\bold X] = \int_0^T \int_\R \frac{|u^+(x)-u^+(y)|-|u(x)-u(y)|}{|x-y|^{1+s}} \, \ud y \ud x \\
		&= 2\int_0^h \int_{\R\setminus \I_h^+} \frac{|u(x)-u(y)+1|-|u(x)-u(y)|}{|x-y|^{1+s}} \, \ud y \ud x \\
		&= 2\int_0^h \int_{\R\setminus (\I_h^+-x)} \frac{|u(x)-u(t+x)+1|-|u(x)-u(t+x)|}{|t|^{1+s}} \, \ud t \ud x \\
		&= 2\int_0^h \int_{\R\setminus (I_h^+-x)} \frac{|u(x)-u(t+x)+1|-|u(x)-u(t+x)|}{|t|^{1+s}} \, \ud t \ud x + O(h^2).
	\end{align*} 
	In the last equality, we ignored the intervals \((kT,kT+h)\) for \(k\neq 0\); this produces an error of order \(O(h^2)\), which will be absorbed into the \(O(h^{2-s})\) term that will appear later.
	
	Now we define the quantity 
	\[
	F^+ \coloneqq 2\,\mathrm{P.V.}\int_\R \frac{|1-u(t)|-|u(t)|}{|t|^{1+s}} \, \ud t.
	\]
	Thanks to the symmetry of the function inside the integral around 0, \(F^+\) is finite. Furthermore, for any \(x\in(0,h)\), we can split this integral as:
	\[
	F^+ = 2\int_{\R \setminus (I_h^+-x)} \frac{|1-u(t)|-|u(t)|}{|t|^{1+s}} \, \ud t + 2\, \mathrm{P.V.} \int_{I_h^+-x}\frac{|1-u(t)|-|u(t)|}{|t|^{1+s}} \, \ud t.
	\]
	We now define the integrand:
	\[
	D^+(x,t) \coloneqq \frac{|u(x)-u(t+x)+1|-|u(x)-u(t+x)|}{|t|^{1+s}} - \frac{|1-u(t)|-|u(t)|}{|t|^{1+s}}.
	\]
	Summing and subtracting \(hF^+\) from \(\Delta^+(h)\), we can rewrite the energy difference as:
	\begin{align}
		\Delta^+(h) &= hF^+ + \left(\Delta^+(h) - hF^+\right) \nonumber\\
		&= hF^+ + 2\int_0^h \left(\int_{\R\setminus (I_h^+-x)} D^+(x,t) \, \ud t - \mathrm{P.V.} \int_{I_h^+-x}\frac{|1-u(t)|-|u(t)|}{|t|^{1+s}} \, \ud t\right) \ud x \label{eq: cusp1 Detlta+} \\
       &  \hspace{11cm}+ O(h^2). \nonumber
	\end{align}
	We now focus on \(D^+(x,t)\). Since \(\bold X\) is a finite set of points in \([0,T)\), we can consider the minimal non-zero distance \(0<\delta \coloneqq \min\{|x_i-x_j|\,|\, j\in\{1,\dots,T\} \text{ and } x_i \neq x_j\}\). We can easily see that \(D^+(x,t)\equiv 0\) for \(|t|<\delta\). Furthermore, an explicit integration yields:
	\begin{equation} \label{eq: cusp1 conto 1}
	    \mathrm{P.V.} \int_{I_h^+-x}\frac{|1-u(t)|-|u(t)|}{|t|^{1+s}} \, \ud t = \frac{x^{-s}}{s}-\frac{(h-x)^{-s}}{s}-\frac{2(h-x)^{1-s}}{1-s}.
	\end{equation}
	Substituting \cref{eq: cusp1 conto 1} into \cref{eq: cusp1 Detlta+}, we get 
	\[
	\Delta^+(h) = hF^+ + 2\int_0^h \left(\int_{|t|>\delta} D^+(x,t) \, \ud t - \left(\frac{x^{-s}}{s}-\frac{(h-x)^{-s}}{s}-\frac{2(h-x)^{1-s}}{1-s}\right)\right) \ud x + O(h^2). 
	\]
	By symmetry, we can see that 
	\[
	2\int_0^h \left(\frac{x^{-s}}{s}-\frac{(h-x)^{-s}}{s}\right) \ud x = 0,
	\]
	and integrating the remaining term gives
	\[
	2\int_0^h \frac{2(h-x)^{1-s}}{1-s} \, \ud x = \frac{4}{(1-s)(2-s)}h^{2-s}=O(h^{2-s}). 
	\]
	Our next claim is that 
	\[
	\int_0^h\int_{|t|>\delta} D^+(x,t) \, \ud t \, \ud x = O(h^2).
	\]
	Indeed, \(D^+(x,t)\equiv 0\) if there is no jump point of \(u\) in \([t,t+x]\). If, on the other hand, there exists \(x_j\in[t,t+x]\), i.e. \(t\in[x_j-x,x_j]\), then \(D^+(x,t)\) is non-zero. Therefore, we can focus our attention on the set 
	\[
	X_\delta(x) \coloneqq \bigcup_{j:|x_j|>\delta} [x_j-x,x_j].
	\]
	This is a union of (not necessarily disjoint) intervals. Taking an upper bound for the numerator and the smallest possible denominator on these intervals, we get
	\begin{align*}
		\left\lvert\int_{|t|>\delta} D^+(x,t) \, \ud t \right\rvert &= \left\lvert\int_{X_\delta(x)} D^+(x,t) \, \ud t \right\rvert \\
		&\leq \sum_{|x_j|>\delta} \int_{x_j-x}^{x_j}\frac{2T+1}{|t|^{1+s}} \, \ud t \\
		&\leq (2T+1)\sum_{|x_j|>\delta} \frac{x}{|x_j|^{1+s}} \\
		&\leq 2T(2T+1)x\sum_{k\in\N} \frac{1}{k^{1+s}} \leq Cx.
	\end{align*}
	Computing the integral in \(x\), we get the claim. To summarize the results obtained so far:
	\begin{equation} \label{eq: cusp1 risul 1}
	    \Delta^+(h) = hF^+ + O(h^{2-s}).
	\end{equation}
	
	We now turn to the left variation, taking \(I_h^- \coloneqq (-h,0)\) and defining:
	\begin{align}
		\Delta^-(h) &\coloneqq \F^s_1[\bold X^-] - \F^s_1[\bold X] = \int_0^T \int_\R \frac{|u^-(x)-u^-(y)|-|u(x)-u(y)|}{|x-y|^{1+s}} \, \ud y \ud x \nonumber \\
		&= 2\int_{-h}^0 \int_{\R\setminus \I_h^-} \frac{|u(x)-u(y)-1|-|u(x)-u(y)|}{|x-y|^{1+s}} \, \ud y \ud x \nonumber\\
		&= 2\int_{-h}^0 \int_{\R\setminus (\I_h^--x)} \frac{|u(x)-u(t+x)-1|-|u(x)-u(t+x)|}{|t|^{1+s}} \, \ud t \ud x \nonumber\\
		&= 2\int_{-h}^0 \int_{\R\setminus (I_h^--x)} \frac{|u(x)-u(t+x)-1|-|u(x)-u(t+x)|}{|t|^{1+s}} \, \ud t \ud x + O(h^2), \label{eq: cusp1 Delta-}
	\end{align} 
    where in the second to last equality we have used the change of variable \(y\mapsto x+t\). Similarly, we define 
	\[
	F^- \coloneqq 2\, \mathrm{P.V.}\int_\R \frac{|m_i-1-u(t)|-|m_i-u(t)|}{|t|^{1+s}} \, \ud t.
	\]
	For any \(x\in(-h,0)\), we can split \(F^-\) in:
    \begin{equation}
    \begin{aligned} \label{eq: cusp1 F-}
        F^- =& 2\int_{\R \setminus (I_h^--x)} \frac{|m_i-1-u(t)|-|m_i-u(t)|}{|t|^{1+s}} \, \ud t \\
        &+ 2\, \mathrm{P.V.} \int_{I_h^--x}\frac{|m_i-1-u(t)|-|m_i-u(t)|}{|t|^{1+s}} \, \ud t. 
    \end{aligned}
    \end{equation}
	Proceeding as before, we set:
	\[
	D^-(x,t) \coloneqq \frac{|u(x)-u(t+x)-1|-|u(x)-u(t+x)|}{|t|^{1+s}} - \frac{|m_i-1-u(t)|-|m_i-u(t)|}{|t|^{1+s}}.
	\]
	Combining \cref{eq: cusp1 Delta-,eq: cusp1 F-}, we get:
	\begin{align*}
	\Delta^-(h) =& hF^- + 2\int_{-h}^0 \left(\int_{\R\setminus (I_h^--x)}D^-(x,t)\,\ud t - \mathrm{P.V.} \int_{I_h^--x}\frac{|m_i-1-u(t)|-|m_i-u(t)|}{|t|^{1+s}} \, \ud t\right) \ud x \\
    & \hspace{12cm}+ O(h^2).
	\end{align*}
	Using analogous computations to those for the right variation, we arrive at:
	\begin{equation} \label{eq: cusp1 risul 2}
	    \Delta^-(h) = hF^- + O(h^{2-s}).
	\end{equation}
	Let us now suppose, by contradiction, that the original configuration \(\bold X\) is a local minimiser. This would imply that \(\Delta^-(h)>0\) and \(\Delta^+(h)>0\). Since \cref{eq: cusp1 risul 1,eq: cusp1 risul 2} are dominated by their linear terms for \(h\) small enough, this would imply \(F^+>0\) and \(F^->0\). We will show that \(F^++F^- < 0\), which gives a contradiction. Indeed, considering the function \(g(z) \coloneqq |z-1|-|z|\), we can write the sum as:
	\[
	F^++F^- = 2 \, \mathrm{P.V.}\int_\R \frac{g(u(y))-g(u(y)-m_i+1)}{|y|^{1+s}} \, \ud y. 
	\]
	Since \(m_i > 1\), it holds that \(u(y) > u(y)-m_i+1\). Furthermore, \(g\) is a monotonically non-increasing function and strictly decreasing on \((0,1)\), meaning \(g(u(y)) \leq g(u(y)-m_i+1)\) everywhere, and strictly less on a set of positive measure. Therefore, the integral is strictly negative, which concludes the proof.
	
\end{proof}
Given the two Lemmas above, we can assume henceforth every configuration to be regular.

\begin{proposition} \label{prop: crit per s>0}
Let \(p\geq 1\), \(s\in(0,1)\) and \(sp<1\). The configuration \(\bold{X}^{\mathrm{eq}}\coloneqq(x_i=i)_{i \in \Z}\) is a critical point for the functional \(\F^s_p\) and \(F[\bold{X}]\) is positive definite for every \(\bold{X}\in\X^0_T\).
\end{proposition}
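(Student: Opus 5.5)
The plan is to prove the two claims separately. Criticality of \(\bold X^{\mathrm{eq}}\) will come from symmetry. Positivity of the Hessian will come from recognising \(F[\bold X]\) as a weighted graph Laplacian. Throughout, "positive definite" has to be read modulo the translation direction. By \cref{rmk: somma zero}, \(F[\bold X](1,\ldots,1)^{\mathrm T}=0\), which reflects \(\F^s_p[\bold X+a]=\F^s_p[\bold X]\). So the statement to prove is \(v^{\mathrm T}F[\bold X]v>0\) for every \(v\notin\mathrm{span}\{(1,\ldots,1)\}\).

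\emph{Criticality.} I would avoid evaluating the principal value in \cref{eq: var prima s>0} directly and argue by invariance instead.
\begin{itemize}
\item Translation invariance gives \(\frac{\ud}{\ud a}\F^s_p[\bold X+a]=0\), hence \(\sum_{i=1}^T\partial_{x_i}\F^s_p[\bold X]=0\) for every \(\bold X\in\X^0_T\).
\item For \(\bold X^{\mathrm{eq}}\), the map \(x\mapsto x+1\) sends \(u=u[\bold X^{\mathrm{eq}}]\) to itself up to a vertical translation. Hence \((-\D)^s_p u(x_i)\) is independent of \(i\): all \(T\) partial derivatives coincide.
\item Together these give \(\partial_{x_i}\F^s_p[\bold X^{\mathrm{eq}}]=0\) for all \(i\).
\end{itemize}
As a cross-check, one can use the reflection \(y\mapsto 2x_i-y\). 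It maps \(w=u(x_i)-u(y)\) to \(-w-1\) (with the appropriate one-sided convention at the jump), which turns \(|w+1|^p-|w|^p\) into its negative. So the principal value vanishes.

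\emph{Sign structure of \(F\).} Set \(\varphi(a)\coloneqq 2|a|^p-|a+1|^p-|a-1|^p\). Convexity of \(t\mapsto|t|^p\) gives \(\varphi\le0\) for every \(p\ge1\). By \cref{eq: var seconda mista s>0}, \(F_{ij}\le0\) for \(i\neq j\). Define weights \(w_{ij}\coloneqq-F_{ij}\ge0\). Combining this with the zero row sums in \cref{eq: somma zero hessiana}, the standard identity for graph Laplacians gives
\[
v^{\mathrm T}F[\bold X]v=\frac12\sum_{i\neq j}w_{ij}(v_i-v_j)^2 .
\]
This is nonnegative. It vanishes only on vectors that are constant on each connected component of the graph \(\{(i,j)\,:\,w_{ij}>0\}\). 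It therefore remains to show that this graph is connected.

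\emph{Connectivity, the main obstacle.}
\begin{itemize}
\item For \(p>1\), strict convexity gives \(\varphi(a)<0\) for every \(a\). The \(k\)-series in \cref{eq: var seconda mista s>0} converges since \(1+sp>1\) and \(u(x_i)-u(x_j+kT)\) stays bounded: \(u\) has zero average slope because there are \(T\) unit jumps per period \(T\). Hence every \(w_{ij}>0\), the graph is complete, and we are done.
\item For \(p=1\), one computes \(\varphi(a)=2|a|-|a+1|-|a-1|\), which is \(<0\) exactly when \(|a|<1\). So I need, for each pair of consecutive points \(x_i<x_{i+1}\), one translate with \(|u(x_i)-u(x_{i+1}+kT)|<1\). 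The candidate is \(k=0\). Between consecutive jumps \(u\) has slope \(1\), so \(u(x_i)-u(x_{i+1})\) is \(x_i-x_{i+1}\) shifted by the jump value. Its modulus is below \(1\) provided the gap \(x_{i+1}-x_i\) and the value convention at the jump are controlled.
\end{itemize}
Here lies the delicate point. I need to fix the convention for \(u\) at the atoms implicit in \cref{eq: var seconda mista s>0}, presumably a midpoint or one-sided value coming from the limit \(h\to0\) in the proof of \Cref{prop: rigid variations}. I then need to check that consecutive jump points always produce an argument of modulus less than \(1\) for some \(k\). If a single large gap breaks this, I would fall back on chaining through other indices. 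The bounded drift of \(u\) forces some translate \(x_j+kT\) with \(u\)-difference in \((-1,1)\) for every \(i\), and I would use this to link \(i\) to \(i+1\) through intermediate points. This is the step I would have to work hardest to get right.
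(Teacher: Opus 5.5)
Your criticality argument is correct and differs from the paper's. The paper evaluates $(-\D)^s_p u(0)$ directly as $\sum_l f_l$ and cancels $f_l$ against $f_{1-l}$; this is exactly your reflection cross-check. Your main route instead combines $\sum_i\partial_{x_i}\F^s_p[\mathbf X]=0$ (translation invariance) with the $1$-periodicity of $u[\mathbf X^{\mathrm{eq}}]$. That route avoids computation but relies on the chain rule $\frac{\ud}{\ud a}\F^s_p[\mathbf X+a]=\sum_i\partial_{x_i}\F^s_p[\mathbf X]$. So you should state that $\mathbf X\mapsto(-\D)^s_p u[\mathbf X](x_i)$ is continuous on $\X^0_T$. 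You are also right that zero row sums give $F[\mathbf X](1,\ldots,1)^{\mathrm T}=0$, so ``positive definite'' can only hold on the complement of $(1,\ldots,1)$. For $p>1$ your argument is the paper's.

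The genuine gap is $p=1$. You correctly reduce it to connectivity of the graph $\{w_{ij}>0\}$, but you do not prove it, and neither route you sketch works as stated.
Three points show this:
\begin{itemize}
\item Since $u$ is exactly $T$-periodic, the numerator in \eqref{eq: var seconda mista s>0} does not depend on $k$, so translates $x_j+kT$ never help.
\item The convention at the atoms is irrelevant. The derivation yields right limits $a_j\coloneqq u(x_j^+)$ for both signs of $h$, and any uniform convention gives the same differences.
\item Then $a_i-a_{i+1}=1-(x_{i+1}-x_i)$, so the consecutive edge is missing exactly when the gap is $\ge 2$. A vertex can lose both consecutive neighbours: for $T=5$ with gaps $(2,2,\tfrac13,\tfrac13,\tfrac13)$ one has $a=(0,1,2,\tfrac43,\tfrac23)$ and $w_{12}=w_{23}=0$.
\end{itemize}
Your fallback, that every $i$ has some neighbour, is only a minimum-degree statement. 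It is also all that the paper's own $p=1$ argument establishes, and minimum degree $\ge 1$ does not imply connectivity.

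The missing idea is short. Since $w_{ij}>0$ iff $|a_i-a_j|<1$, the graph is disconnected iff there is $c$ such that no $a_i$ lies in $(c,c+1)$, while some $a_i\le c$ and some $a_j\ge c+1$. Go around the cycle $a_1,\ldots,a_T,a_{T+1}=a_1$. Some step must then pass from $\ge c+1$ to $\le c$, i.e.\ $a_{i+1}-a_i\le -1$. But $a_{i+1}-a_i=(x_{i+1}-x_i)-1>-1$ for $\mathbf X\in\X^0_T$. This is the paper's minimum-degree argument run with an arbitrary threshold $c$ in place of $u(x_1)$. Hence the graph is connected, $\frac12\sum_{i\ne j}w_{ij}(\xi_i-\xi_j)^2$ vanishes only for constant $\xi$, and your proof closes.
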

\begin{proof}
We start by showing that the first variation, when computed on the equispaced configuration \(\bold{X}^\mathrm{eq}\), is zero. Indeed we have
\begin{align*}
    \partial_{x_i}\F^s_p\left[\bold{X}^{\mathrm{eq}}\right] = \sum_{l\in\Z} 2 \int_0^1 \frac{|y-1|^p-|y|^p}{|y-l|^{1+sp}} \ud y=\sum_{l\in\Z} f_l,
\end{align*}
and now, taking respectively the term corresponding to \(l\) and \(-l+1\), and observe that for the latter
\begin{align*}
    f_{-l+1}=&2\int_0^1 \frac{|y-1|^p-|y|^p}{|y+l-1|^{1+sp}} \ud y\\
    =& 2 \int_{-1}^0 \frac{|y|^p-|y+1|^p}{|y+l|^{1+sp}} \ud y \\
    =& 2 \int_{0}^{1} \frac{|y|^p-|y-1|^p}{|y-l|^{1+sp}} \ud y \\
    =& -2\int_0^1 \frac{|y-1|^p-|y|^p}{|y-l|^{1+sp}} \ud y=-f_l,
\end{align*}
where in the second equality we used the change of variable \(y-1 \mapsto y\). Therefore, all the terms in the first variation cancel each other. 

As for the second variation, we look at the matrix  \(F[\bold{X}]\) (see \cref{rmk: somma zero}) and we start by observing that all the mixed terms \(F_{ij}[\bold{X}]\), when \(i \neq j\), are negative, thanks to the inequality:
\begin{equation} \label{eq: convessità al contrario}
    2|t|^p-|t-1|^p-|t+1|^p < 0, \qquad \forall p> 1, \forall t\in\R.
\end{equation}
Hence, we show that the matrix \(F[\bold{X}]\) is positive-definite for \(p>1\). To relieve the notation, we will write \(F_{ij}=F_{ij}[\bold{X}]\). Take \(\xi\in\R^T\), we start by observing that we can decompose the sum \(F\xi\cdot\xi\) and rearrange the terms appropriately to get
\begin{align} \label{eq: conto positività var seconda}
    F\xi\cdot\xi= &\sum_{i}F_{ii} \xi_i^2 + \sum_{j\neq i} F_{ij}\xi_i\xi_j \\
    =& -\sum_{i=1}^T\sum_{j\neq i}F_{ij}\xi_i^2 + \sum_{j\neq i} F_{ij}\xi_i\xi_j \nonumber\\
    =& -\sum_{i=1}^T \sum_{j>i} F_{ij} \left(\xi_i-\xi_j\right)^2,\nonumber
\end{align}
where in the second equality we have used the property stated in \cref{eq: somma zero hessiana}.
As we showed in \cref{eq: convessità al contrario}, the elements \(F_{ij},\,i\neq j\), are negative; therefore, the matrix is positive definite.

For \(p=1\) the strict inequality in \cref{eq: convessità al contrario} holds only if \(|t|<1\); therefore, a priori, the matrix \(F\) is only positive-semidefinite. We will now prove that, for every \(i=1,\ldots,T\) there exists at least one \(j\neq 1\) such that \(F_{ij}<0\) and, in doing so,  that the matrix \(F\) is indeed positive-definite for \(p=1\) as well.

Without loss of generality we can fix \(i=1\) and \(x_1=0\). The condition for \(|t|<1\) in \cref{eq: convessità al contrario} directly translate to the condition \(|u(x_1)-u(x_j)|<1\) for every \(j \in \{1,\ldots,T\}\) in \cref{eq: var seconda pura s>0}. If, by contradiction, for every \(j>1\) holds \(|u(x_1)-u(x_j)|\geq1\), since the jumps are separated and quantised to be 1, the only possibility is for \(u(x_j)-u(x_1)\geq1\). Then it would also be true that \(u(x_T)-u(x_1)\geq 1\) and therefore \(u(T)-u(0)\geq u(x_T)-u(x_1)\geq 1\), but this is in contradiction with the periodicity of the function \(u\) and the presence of \(T\) jumps in \([0,T)\).

\end{proof}

\begin{theorem}[Equispaced minimiser] \label{thm: min s>0}
Let \(p\geq 1\), \(s\in(0,1)\) and \(sp<1\). The unique critical point (up to translations) of the energy \(\F^s_p\) in \(\X^0_T\) is \(\bold{X}^\mathrm{eq}\). Moreover, \(\bold{X}^\mathrm{eq}\) is the unique (up to translations) local and global minimiser in \(\X_T\).
\end{theorem}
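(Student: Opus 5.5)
Since \(\F^s_p\) is invariant under \(\bold X\mapsto\bold X+a\), I would quotient by translations, e.g.\ fix \(x_1=0\), and work with the remaining coordinates \((x_2,\ldots,x_T)\). The set \(\X^0_T\) with this normalisation is an open, convex set in \(\R^{T-1}\): it is the simplex of ordered, distinct points \(0<x_2<\dots<x_T<T\). On this set \(\F^s_p\) is \(C^2\) by \Cref{prop: rigid variations}, since the explicit formulas \cref{eq: var seconda mista s>0,eq: var seconda pura s>0} are continuous away from collisions. By \Cref{prop: crit per s>0} its Hessian \(F[\bold X]\) satisfies \(F\xi\cdot\xi>0\) for every \(\xi\) that is not a multiple of \((1,\dots,1)\). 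Indeed, \cref{eq: conto positività var seconda} gives \(F\xi\cdot\xi=-\sum_{i<j}F_{ij}(\xi_i-\xi_j)^2\). For \(p>1\) all \(F_{ij}<0\). For \(p=1\) the connectivity argument shows each row has a negative off-diagonal entry. I would refine this slightly: the graph with edges \(\{F_{ij}<0\}\) should be connected, so that the form vanishes only on constants; restricted to \(\xi_1=0\), the Hessian is then positive definite. Hence the restricted \(\F^s_p\) is strictly convex on the simplex, and it has at most one critical point there. By \Cref{prop: crit per s>0}, \(\bold X^{\mathrm{eq}}\) is a critical point, so it is the unique critical point in \(\X^0_T\) up to translations.

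Next, existence and global minimality. The space \(\X_T\) modulo translations is compact: it is the closed simplex with collisions allowed. I would show that \(\bold X\mapsto\F^s_p[\bold X]\) is lower semicontinuous, or in fact continuous, there when \(sp<1\). As \(\bold X_n\to\bold X\), the functions \(u[\bold X_n]\to u[\bold X]\) in \(L^p(\mathbb T^1_T)\) after normalising, so Fatou gives lower semicontinuity. For the upper bound I would use a uniform domination: the jumps are bounded by \(T\), and \(sp<1\) makes \(|t|^{-1-sp}\chi_{\{x-y\text{ straddles a jump}\}}\) integrable. A global minimiser \(\bold X^*\) therefore exists in \(\X_T\).

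I then rule out collisions for \(\bold X^*\). If \(\bold X^*\) had a point of multiplicity \(m_i>1\), \Cref{lemma: cuspidi} (for \(p>1\)) would give \(\partial^+_{x_i}\F^s_p=-\infty\), so a small rigid variation strictly lowers the energy. For \(p=1\), \Cref{lemma: cuspidi p=1} directly produces \(\bold X'\) with lower energy. Either way \(\bold X^*\) is not even a local minimiser, so \(\bold X^*\in\X^0_T\). Being an interior minimum of a \(C^1\) function on the open simplex, it is a critical point, hence equals \(\bold X^{\mathrm{eq}}\) up to translation.

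The same collision argument shows that any local minimiser in \(\X_T\) lies in \(\X^0_T\) and is critical, so it too coincides with \(\bold X^{\mathrm{eq}}\). I expect the main obstacles to be the two technical points above. The first is the continuity and compactness of \(\F^s_p\) on the closed simplex, including the behaviour at the boundary \(x_T\to T\), which is really a collision with \(x_1+T\). The second is the connectedness of the negative-entry graph for \(p=1\). For the latter I would try the periodicity argument of \Cref{prop: crit per s>0} applied to any proper subset of indices: some point outside the subset must have \(|u(x_i)-u(x_j)|<1\) with a point inside.
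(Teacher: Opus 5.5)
Your argument is correct and has the same skeleton as the paper's proof. The cusp lemmas force a local minimiser into \(\X^0_T\), the identity \(F\xi\cdot\xi=-\sum_{i<j}F_{ij}(\xi_i-\xi_j)^2\) gives strict convexity on the convex set of regular configurations, and \(\bold{X}^{\mathrm{eq}}\) is critical. You are more careful than the paper at three points, and in each case your version is the one actually needed.

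First, every row of \(F[\bold{X}]\) sums to zero, so \((1,\dots,1)\) always lies in its kernel. The paper's claim of ``positive definite'' can therefore only hold transversally to translations, and your normalisation \(x_1=0\) is what makes the convexity step literally true.

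Second, for \(p=1\) the paper only shows that each row has some negative off-diagonal entry. That does not rule out a disconnected pattern such as \(\{1,2\},\{3,4\}\), which would give a kernel larger than the constants. Your connectivity refinement closes this, and it does hold. Set \(v_i\coloneqq u(x_i^+)\); then \(F_{ij}<0\) iff \(|v_i-v_j|<1\), and cyclically \(v_{i+1}-v_i=(x_{i+1}-x_i)-1>-1\). If the graph were disconnected, there would be \(a\in\R\) with each \(v_i\) either \(\le a\) or \(\ge a+1\), both cases occurring. Going around the cycle, the sequence would then have to drop by at least \(1\) in a single step, which is impossible.

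Third, the paper passes from ``\(\bold{X}^{\mathrm{eq}}\) is the only possible local minimiser'' to global minimality without showing that a minimiser exists in \(\X_T\). Your compactness of the closed simplex plus Fatou lower semicontinuity supplies this. Lower semicontinuity alone suffices; the continuity you also sketch is not needed. An alternative that avoids compactness is to apply the cusp lemmas finitely many times, each step lowering both the energy and the total excess multiplicity, until you land in \(\X^0_T\). There \(\bold{X}^{\mathrm{eq}}\) is the strict global minimiser by strict convexity.
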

\begin{proof}
Let \(\bold X\) be a local minimiser of \(\F^s_p\) in \(\X_T\); by \Cref{lemma: cuspidi} it naturally follows that \(\bold X \in \X^0_T\), so that it is a critical point for the energy. By \Cref{prop: crit per s>0}, the second variation of the energy is strictly positive; this fact, together with the convexity of the space \(\X^0_T\), implies that there is at most one critical point of the energy. As for \Cref{prop: crit per s>0}, the first variation vanishes on the equispaced configuration; therefore, following the previous considerations, it is the only local (and global) minimiser.
    
\end{proof}

\subsection{The critical and super-critical case: \(sp \geq 1\)}
In this setting the energy of a configuration \(\bold{X}\) is infinite, indeed \(u[\bold{X}]\notin W^{s,p}(\mathbb{T}^1_T)\) for \(sp\geq 1\). To circumvent this issue, we adopt a standard regularisation approach. Let \(\rho\) be a standard mollifier supported in \((-1,1)\) and for any \(\eps>0\), let \(\rho_\eps(\cdot)=\frac{1}{\eps}\rho(\frac{\cdot}{\eps})\). Now, let \(\bold{X}\in\mathbb{X}_T\), \(u=u[\bold{X}]\) and \(u^\eps\coloneqq u \ast \rho_\eps\). For every \(\bold X \in \X_T\) we define 
\[
\F^{s,\eps}_p[\bold{X}] \coloneqq \F^{s}_p(u^\varepsilon[\bold{X}]).
\]
From now on, we will let \(\eps>0\) be fixed. 
\begin{remark}\label{rmk: variazione rigida moscia}
    It is clear that,  by the linearity of the convolution, doing a rigid variation of \(u^\eps\) in \(x_i \in \bold{X}\) is equivalent to compute a rigid variation of the function \(u\) in \(x_i\) and then apply the convolution; indeed we have
    \[
        u^\eps[\bold{X}+he_i](x)= \left( u+ \rchi_{\I_h}\right)\ast \rho_\eps(x).
    \]
\end{remark}
\begin{figure}[h]
	\centering
	\begin{tikzpicture}
		
		\draw[dashed,black!30] (0,0) node[black,below=.08cm]{\(x_i\)} -- (0,4);
		\draw[dashed,black!30] (-.5,0) node[black, above left]{\(x_i\!-\!\varepsilon\)} -- (-.5,3.5);
		\draw[dashed,black!30] (1,0) node[black,below]{\(x_i\!+\!h\)} -- (1,3.5);
		\draw[dashed,black!30] (1.5,0) node[black, above right]{\(x_i\!+\!h\!+\!\varepsilon\)} -- (1.5,2.5);
		
		\draw[black!30,thick] (-.5,3.5) to[out=45,in=95, distance=15] (0,2.5);
		\draw[black,thick] (.5,4.5) to[out=45,in=95, distance=15] (1,3.5);
		\draw[thick] (-3,1)--(.5,4.5) node[pos=.4,above=.3cm]{\(u^\varepsilon\)};
		\draw[dashed,thick] (-4,0)--(-3,1);
		
		\draw[thick]	(1,2)--(3,4);
		\draw[dashed,thick]	(3,4)--(4,5);
		\draw[black!30,thick] (0,2.5) to[out=-85,in=225, distance=15] (.5,1.5);
		\draw[black!30,thick] (.5,1.5) -- (1.5,2.5);
		\draw[black,thick] (1,3.5) to[out=-85,in=225, distance=15] (1.5,2.5);
		
		\draw[arrows={-Stealth}] (.5,1.7) -- (.5,4.3);
		
		\draw[->] (-5,0)--(5,0);
	\end{tikzpicture}
	\label{fig: schema variazione rigida moscia}
	\caption{This figure is an example of a rigid variation applied to a regularised function \(u^\eps[\bold{X}]\) with a jump in \(x_i\). The effect of the variation on the function is the same as moving the regularised transition from \(x_i\) to \(x_i+h\).}
\end{figure}%

To relieve the notation we will denote by \(\rho_\eps^i(x)\coloneqq \rho_\eps(x-x_i)\) and with \(\tilde \rho_\eps^i(x) \coloneqq \sum_{k\in\Z}\rho_\eps(x-x_i-kT)\).

\begin{proposition}[First and second variations for \(\F^{s,\eps}_p\)] \label{prop: rigid variations super-critical}
Let  \(p> 1\), \( \frac{1}{p} \leq s < 1\), \( \bold{X}\in\X^0_T\), \(u^\varepsilon=u^\varepsilon[\bold{X}]\) and let us take \(i, j \in \{1,\ldots,T\}\) with \(i \neq j\). Then it holds true
\begin{align}
 \partial_{x_i} \F^{s,\eps}_p \left[\bold{X} \right] &=  2\int_{B_\varepsilon(x_i)}\left(\left(- \Delta \right)^s_p u^\eps\ast \rho_\eps\right)(x)\,  \ud x, \label{eq: var prima supercritica s>0}\\ 
 \strut
 \partial^2_{x_i x_j} \F^{s,\eps}_p \left[\bold{X} \right] &=p(p-1) \int_{\R} \int_{\R} \frac{\left\lvert u^\eps(x)-u^\eps(y)\right\rvert^{p-2} }{|x-y|^{1+sp}} \left(\tilde \rho_\eps^j(x)- \tilde \rho_\eps^j(y)\right) \left(\rho_\eps^i(x) -\rho_\eps^i(y) \right)\,\ud y \ud x, \label{eq: var seconda mista problematica}\\
 \strut
 \partial^2_{x_i^2} \F^{s,\eps}_p \left[\bold{X} \right] &=-p(p-1) \sum_{j\neq i}\int_{\R} \int_{\R} \frac{\left\lvert u^\eps(x)-u^\eps(y)\right\rvert^{p-2} }{|x-y|^{1+sp}} \left(\tilde \rho_\eps^j(x)- \tilde \rho_\eps^j(y)\right) \left(\rho_\eps^i(x) -\rho_\eps^i(y) \right)\,\ud y \ud x. \label{eq: var seconda pura problematica}
\end{align}
Moreover, if \(\bold X \in \X_T^{4\eps}\) then
\begin{align}
 \partial^2_{x_i x_j} \F^{s,\eps}_p \left[\bold{X} \right] &= -2p(p-1) \sum_{k \in \Z} \int_{B_\eps(x_i)} \int_{B_\eps(x_j)} \frac{\left\lvert u^\eps(x)-u^\eps(y)\right\rvert^{p-2}}{|x-y-kT|^{1+sp}} \rho_\eps^j(y) \rho_\eps^i(x)\, \ud y \ud x,\label{eq: var seconda mista supercritica s>0}\\
 \partial^2_{x^2_i} \F^{s,\eps}_p \left[\bold{X} \right] &=  2p(p-1)  \sum_{j \neq i} \sum_{k \in \Z}\int_{B_\eps(x_i)} \int_{B_\eps(x_j)} \frac{\left\lvert u^\eps(x)-u^\eps(y)\right\rvert^{p-2}}{|x-y-kT|^{1+sp}} \rho_\eps^j(y)\rho_\eps^i(x)\, \ud y \ud x.
 \label{eq: var seconda pura supercritica s>0}
\end{align}
\end{proposition}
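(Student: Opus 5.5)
Since $u^\eps[\bold X+he_i]=u^\eps[\bold X]+\rchi_{\I_h}\ast\rho_\eps$ by \cref{rmk: variazione rigida moscia}, and $\frac1h\rchi_{\I_h}\ast\rho_\eps\to\tilde\rho^i_\eps$ uniformly (indeed in $C^1$) as $h\to0$, I would treat rigid variations of $\F^{s,\eps}_p$ as smooth outer variations of the smooth periodic function $u^\eps$ in the direction $\tilde\rho^i_\eps$. Concretely, I plan to differentiate
\[
\F^s_p(u^\eps+t\tilde\rho^i_\eps)=\int_0^T\!\!\int_\R \frac{|u^\eps(x)-u^\eps(y)+t(\tilde\rho^i_\eps(x)-\tilde\rho^i_\eps(y))|^p}{|x-y|^{1+sp}}\,\ud y\,\ud x
\]
under the integral sign. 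This is legitimate because $u^\eps$ and $\tilde\rho^i_\eps$ are smooth. Near the diagonal the integrand, its $t$-derivative and its second $t$-derivative are controlled by $C|x-y|^{p-1-sp}$ (respectively $C|x-y|^{p-2-sp}\cdot|x-y|^2$), which are integrable since $s<1$. Far from the diagonal, boundedness gives integrability. For $1<p<2$ the factor $|a|^{p-2}$ is singular where $u^\eps(x)=u^\eps(y)$; there I would use $|a|^{p-2}|b|^2$ with $|b|\le C|x-y|$ and handle the singular set by a dominated convergence or difference-quotient argument, since $t\mapsto|a+tb|^p$ is $C^1$ with locally integrable second derivative. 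This regularity issue is the step I expect to be the main technical obstacle.

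For the first variation, the $t$-derivative at $0$ is $p\int\!\int |a|^{p-2}a\,(\tilde\rho^i_\eps(x)-\tilde\rho^i_\eps(y))|x-y|^{-1-sp}$. I would split this into the $\tilde\rho^i_\eps(x)$ term and the $\tilde\rho^i_\eps(y)$ term. Using the symmetry $(x,y)\mapsto(y,x)$ together with periodicity (the same $Q_T$ versus $\R$ swap as in \Cref{prop: rigid variations}), both terms equal $\int_{\R}\tilde\rho^i_\eps(x)(-\Delta)^s_pu^\eps(x)\,\ud x$ restricted to one period. Restricting to the period containing $x_i$ turns this into an integral of $\rho^i_\eps\,(-\Delta)^s_p u^\eps$ over $B_\eps(x_i)$. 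I expect the stated form $\int_{B_\eps(x_i)}((-\Delta)^s_pu^\eps\ast\rho_\eps)$ to follow after rewriting the product with the mollifier via Fubini and a change of variables, keeping track of the factor $2$.

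For the second variations, I would differentiate the first-variation expression once more in the direction $\tilde\rho^j_\eps$. Differentiating $|a|^{p-2}a$ gives $(p-1)|a|^{p-2}$, which yields \cref{eq: var seconda mista problematica} with the product $(\tilde\rho^j_\eps(x)-\tilde\rho^j_\eps(y))(\rho^i_\eps(x)-\rho^i_\eps(y))$. Here one periodisation is absorbed by integrating over $\R$ instead of $[0,T]$. Next, the sum of all rigid translations $\sum_j\tilde\rho^j_\eps$ is related to translating the whole configuration, which leaves the energy invariant. Differentiating the identity $\sum_j\partial_{x_j}\F^{s,\eps}_p[\bold X]\equiv0$ gives \cref{eq: var seconda pura problematica}, exactly as in \cref{eq: somma zero hessiana}.

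Finally, assume $\bold X\in\X^{4\eps}_T$. Then for $j\neq i$ the supports of $\rho^i_\eps$ and of every translate $\rho_\eps(\cdot-x_j-kT)$ are disjoint, at distance at least $2\eps$. Expanding the product, the terms $\tilde\rho^j_\eps(x)\rho^i_\eps(x)$ and $\tilde\rho^j_\eps(y)\rho^i_\eps(y)$ vanish identically. The two cross terms coincide by the $x\leftrightarrow y$ symmetry, each contributing $-\tilde\rho^j_\eps(y)\rho^i_\eps(x)$. Writing $\tilde\rho^j_\eps(y)=\sum_k\rho^j_\eps(y-kT)$ and shifting $y\mapsto y+kT$ (using the periodicity of $u^\eps$) gives \cref{eq: var seconda mista supercritica s>0}. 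There is no singularity issue here, because $|x-y-kT|\ge2\eps$ on the supports. Summing over $j\neq i$ with a sign change gives \cref{eq: var seconda pura supercritica s>0}.
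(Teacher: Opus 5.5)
Your argument is essentially the paper's: both differentiate under the integral sign (the paper via the Mean Value Theorem applied to the actual rigid difference quotients) and then use periodicity and the \(x\leftrightarrow y\) symmetry. Your main deviations are that you reach \eqref{eq: var seconda mista problematica} directly from the absolutely convergent symmetric form and deduce \eqref{eq: var seconda mista supercritica s>0} by expanding the product, whereas the paper goes the other way, differentiating the localised principal-value form first. You also justify the zero row-sum identity by translation invariance, which the paper states without proof.

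One correction: stop the first variation at \(2\int_{B_\eps(x_i)}\rho^i_\eps\,(-\Delta)^s_pu^\eps\,\ud x=2\bigl((-\Delta)^s_pu^\eps\ast\rho_\eps\bigr)(x_i)\) (for the even mollifier \(\rho\)). This is exactly what the paper's proof obtains, and hence the intended meaning of \eqref{eq: var prima supercritica s>0}. No Fubini manipulation turns it into the literal \(\int_{B_\eps(x_i)}\bigl((-\Delta)^s_pu^\eps\ast\rho_\eps\bigr)\ud x=\int_\R(-\Delta)^s_pu^\eps\,(\rchi_{B_\eps(x_i)}\ast\rho_\eps)\,\ud x\), whose weight is not \(\rho^i_\eps\).
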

\begin{proof} We proceed by computing the difference quotient. Recall that \(I_h=[x_i,x_i+h)\) and \(\I_h\) is the union of the \(T\)-translated intervals. Then
    \begin{align*}
     & \frac{\F^{s,\eps}_p\left[\bold{X}+he_i\right]-\F^{s,\eps}_p\left[\bold{X}\right]}{h} \\
     =& \frac{1}{h} \int_0^T \int_{\R} \frac{\left\lvert u^\eps(x)-u^\eps(y)+\rchi^\eps_{I_h}(x)-\rchi^\eps_{\I_h}(y) \right\rvert^p -\left\lvert u^\eps(x)-u^\eps(y) \right\lvert^p}{|x-y|^{1+sp}}\, \ud y \ud x \\
     =& \frac{1}{h} \int_0^T \int_{\R} \frac{\left\lvert u^\eps(x)-u^\eps(y)+ \int_{I_h} \rho_\eps (x-z)\ud z-\int_{\I_h} \rho_\eps (y-z)\ud z\right\rvert^p -\left\lvert u^\eps(x)-u^\eps(y) \right\lvert^p }{|x-y|^{1+sp}}\, \ud y \ud x\\
      =& p \int_0^T \int_{\R} \frac{\left\lvert g^h_\eps(x,y)\right\rvert^{p-2} g^h_\eps(x,y) }{|x-y|^{1+sp}} \frac{\left(\int_{I_h} \rho_\eps (x-z)\ud z-\int_{\I_h} \rho_\eps (y-z)\ud z\right)}{h}\,\ud y \ud x.\\
\end{align*}
In the last equality, we have used the Mean Value Theorem with the function 
\[
f(t)=\left\lvert u^\eps(x)-u^\eps(y)+t(\rchi^\eps_{I_h}(x)-\rchi^\eps_{\I_h}(y))\right\rvert^p
\] 
where \(g_\eps^h\) is the function that satisfies 
\[
f(1)-f(0) = p|g_\eps^h(x,y)|^{p-1}g_\eps^h(x,y)(\rchi^\eps_{I_h}(x)-\rchi^\eps_{\I_h}(y)).
\]
We observe that \(g^h_\eps\) is a smooth function, converging uniformly to \(u^\eps(x)-u^\eps(y)\) as \(h\to0\), such that for every \((x,y) \in [0,T]\times \R\) it holds
\[
u^\eps(x)-u^\eps(y)-\left|\rchi^\eps_{I_h}(x)-\rchi^\eps_{\I_h}(y)\right|\leq g^h_\eps(x,y) \leq u^\eps(x)-u^\eps(y)+\left|\rchi^\eps_{I_h}(x)-\rchi^\eps_{\I_h}(y)\right|.
\]
Therefore, we can take the limit as \(h \to 0\), getting

\begin{align*}
    &\partial_{x_i} \F^{s,\eps}_p [\textbf{X}]\\
     =& p \int_0^T \int_{\R} \frac{\left\lvert u^\eps(x)-u^\eps(y)\right\rvert^{p-2} (u^\eps(x)-u^\eps(y)) }{|x-y|^{1+sp}} \left(\rho_\eps(x-x_i)-\sum_{k \in \Z} \rho_\eps(y-x_i-kT)\right) \ud y \ud x\\
    =& p\int_{B_\eps(x_i)} \int_{\R} \frac{\left\lvert u^\eps(x)-u^\eps(y)\right\rvert^{p-2} \left( u^\eps(x)-u^\eps(y)\right) }{|x-y|^{1+sp}} \rho_\eps(x-x_i) \,\ud y \ud x\\
    &- p \sum_{k \in \Z}
    \int_0^T \int_{B_\eps(x_i+kT)} \frac{\left\lvert u^\eps(x)-u^\eps(y)\right\rvert^{p-2} \left( u^\eps(x)-u^\eps(y)\right)} {|x-y|^{1+sp}} \rho_\eps(y-x_i-kT)\, \ud y \ud x\\
    =&p \sum_{k \in \Z} \int_{B_\eps(x_i)} \int_0^T \frac{\left\lvert u^\eps(x)-u^\eps(y)\right\rvert^{p-2} \left( u^\eps(x)-u^\eps(y)\right) }{|x-y-kT|^{1+sp}} \rho_\eps(x-x_i)\, \ud y \ud x\\
    &- p\sum_{k \in \Z}
    \int_0^T \int_{B_\eps(x_i)} \frac{\left\lvert u^\eps(x)-u^\eps(y)\right\rvert^{p-2} \left(u^\eps(x)-u^\eps(y)\right)} {|x-y-kT|^{1+sp}} \rho_\eps(y-x_i)\, \ud y \ud x\\
    =&p \sum_{k \in \Z} \int_{B_\eps(x_i)} \int_0^T \frac{\left\lvert u^\eps(x)-u^\eps(y)\right\rvert^{p-2} \left( u^\eps(x)-u^\eps(y)\right) }{|x-y-kT|^{1+sp}} \rho_\eps(x-x_i)\, \ud y \ud x\\
    &- p\sum_{k \in \Z}
     \int_{B_\eps(x_i)} \int_0^T
     \frac{\left\lvert u^\eps(x)-u^\eps(y)\right\rvert^{p-2} \left(u^\eps(x)-u^\eps(y)\right)} {|x-y-kT|^{1+sp}} \rho_\eps(y-x_i) \,\ud x \ud y\\
      =& 2p \int_{B_\eps(x_i)} \int_\R \frac{\left\lvert u^\eps(x)-u^\eps(y)\right\rvert^{p-2} \left( u^\eps(x)-u^\eps(y)\right) }{|x-y|^{1+sp}} \rho_\eps(x-x_i) \,\ud y \ud x.\\
\end{align*}
       Now, we are ready to prove \cref{eq: var seconda mista supercritica s>0}. Let \(j \neq i\), then
\begin{align*}
    &\frac{\partial_{x_i}\F^{s,\eps}_p \left[\bold{X}+he_j\right]-\partial_{x_i}\F^{s,\eps}_p\left[\bold{X}\right]}{h} \\
    =& \frac{2p}{h} \int_{B_\eps(x_i)} \int_{\R} \frac{\left\lvert u^\eps(x)-u^\eps(y)+ \rchi^\eps_{J_h}(x)-\rchi^\eps_{\J_h}(y) \right\rvert^{p-2} (u^\eps(x)-u^\eps(y)+ \rchi^\eps_{J_h}(x)-\rchi^\eps_{\J_h}(y))}{|x-y|^{1+sp}} \rho_\eps(x-x_i)\, \ud y \ud x\\
     &-\frac{2p}{h} \int_{B_\eps(x_i)} \int_{\R} \frac{ \left\lvert u^\eps(x)-u^\eps(y) \right\lvert^{p-2}(u^\eps(x)-u^\eps(y))}{|x-y|^{1+sp}}\rho_\eps(x-x_i) \,\ud y \ud x \\
    =& \frac{2p(p-1)}{h} \int_{B_\eps(x_i)} \int_{\R} \frac{\left\lvert g_\eps^h(x,y)\right\rvert^{p-2}}{|x-y|^{1+sp}} \frac{\left( \int_{J_h} \rho_\eps (x-z)\ud z-\int_{\J_h} \rho_\eps (y-z)\ud z \right)}{h} \rho_\eps(x-x_i)\,\ud y \ud x,\\
\end{align*}
using, in the last equality, the Mean Value Theorem as in the first argument.
Then, we can take the limit as \(h \to 0\) to get
\begin{equation*} 
    \partial^2_{x_i x_j} \F^{s,\eps}_p \left[\bold{X} \right] = 2p(p-1) \int_{B_\eps(x_i)} \int_{\R} \frac{\left\lvert u^\eps(x)-u^\eps(y)\right\rvert^{p-2}}{|x-y|^{1+sp}} \left(\rho_\eps^j(x)-\tilde \rho_\eps^j(y)\right) \rho_\eps^i(x) \,\ud y \ud x.
\end{equation*}
Let us observe that for \(p\in(1,2)\) the numerator in the fraction has negative exponent. This poses a potential integrability issue: due to the periodicity of \(u^\eps\) there exist distant points (at distance \(kT\) for instance) mapped to the same value. This can also happen for two points (or subsets) of \([0,T]\). Nevertheless, since the function \(u^\eps\) is never constant we show that whenever this happens it resolves into a finite contribution in the second variation. We begin by observing that we can consider, without loss of generality, a set \(A\) in which \(u^\eps\) is monotone, and another set \(B=A+c\) such that \(u^\eps|_A=u^\eps|_B\).
\begin{align*}
    \int_A\int_B\frac{\left\lvert u^\eps(x)-u^\eps(y)\right\rvert^{p-2}}{|x-y|^{1+sp}} \ud y \ud x &= \int_A\int_A\frac{\left\lvert u^\eps(x)-u^\eps(y)\right\rvert^{p-2}}{|x-y+c|^{1+sp}} \ud y \ud x \\
    &= \int_A\int_A \frac{\left\lvert (u^\eps)'(\xi)\right\rvert^{p-2}|x-y|^{p-2}}{|x-y+c|^{1+sp}} \ud y \ud x <+\infty,
\end{align*}
where we used the change of variables \(y\mapsto y-c\) and the Mean Value Theorem. The integral is indeed finite, since \(p\in(1,2)\) implies \(p-2>-1\).

Now, if we suppose \(B_\eps(x_i) \cap B_\eps(x_j) = \varnothing\) we easily get that 
\begin{align*}
    &\partial^2_{x_i x_j} \F^{s,\eps}_p \left[\bold{X} \right]\\
    =& 2p(p-1) \int_{B_\eps(x_i)} \int_{\R} \frac{\left\lvert u^\eps(x)-u^\eps(y)\right\rvert^{p-2}}{|x-y|^{1+sp}} \rho_\eps(x-x_j) \rho_\eps(x-x_i)\,\ud y \ud x\\
    &- 2p(p-1) \int_{B_\eps(x_i)} \int_{\R} \frac{\left\lvert u^\eps(x)-u^\eps(y)\right\rvert^{p-2}}{|x-y|^{1+sp}}\sum_{k \in \Z} \rho_\eps(y-x_j-kT)\, \ud y \ud x\\
    =&-2p(p-1) \sum_{k \in \Z} \int_{B_\eps(x_i)} \int_{B_\eps(x_j+kT)} \frac{\left\lvert u^\eps(x)-u^\eps(y)\right\rvert^{p-2}}{|x-y|^{1+sp}} \rho_\eps(y-x_j-kT)\rho_\eps(x-x_i)\, \ud y \ud x\\
    =& -2p(p-1) \sum_{k \in \Z} \int_{B_\eps(x_i)} \int_{B_\eps(x_j)} \frac{\left\lvert u^\eps(x)-u^\eps(y)\right\rvert^{p-2}}{|x-y-kT|^{1+sp}} \rho_\eps(y-x_j)\rho_\eps(x-x_i)\, \ud y \ud x.\\
\end{align*}
This gives us \cref{eq: var seconda mista supercritica s>0}. On the other hand, \cref{eq: var seconda pura supercritica s>0} follows from the property stated in \cref{eq: somma zero hessiana}, which is true also in the super-critical case.

Otherwise, if we suppose  \(B_\eps(x_i) \cap B_\eps(x_j) \neq \varnothing\), we get
\begin{align*}
&\partial^2_{x_i x_j} \F^{s,\eps}p \left[\bold{X} \right]\\
=& 2p(p-1) \int{\R} \int_{\R} \frac{\left\lvert u^\eps(x)-u^\eps(y)\right\rvert^{p-2} }{|x-y|^{1+sp}} \left(\rho_\eps(x-x_j)-\sum_{k \in \Z} \rho_\eps(y-x_j-kT)\right) \rho_\eps(x-x_i) \,\ud y \ud x.\\
=&\frac{2p(p-1)}{2} \int_{\R} \int_{\R} \frac{\left\lvert u^\eps(x)-u^\eps(y)\right\rvert^{p-2} }{|x-y|^{1+sp}} \left(\sum_{k \in \Z} \left(\rho_\eps(x-x_j-kT)- \rho_\eps(y-x_j-kT)\right)\right) \rho_\eps(x-x_i) \,\ud y \ud x\\
&-\frac{2p(p-1)}{2} \int_{\R} \int_{\R} \frac{\left\lvert u^\eps(x)-u^\eps(y)\right\rvert^{p-2}}{|x-y|^{1+sp}}\left(\sum_{k \in \Z}\left(\rho_\eps(y-x_j-kT)- \rho_\eps(x-x_j-kT)\right) \right) \rho_\eps(y-x_i) \,\ud x \ud y\\
=&p(p-1) \int_{\R} \int_{\R} \frac{\left\lvert u^\eps(x)-u^\eps(y)\right\rvert^{p-2} }{|x-y|^{1+sp}} \left(\sum_{k \in \Z} \left(\rho_\eps(x-x_j-kT)- \rho_\eps(y-x_j-kT)\right)\right) \\
& \hspace{10cm} \cdot \left(\rho_\eps(x-x_i) -\rho_\eps(y-x_i) \right)\ud y \ud x,
\end{align*}
where we have first divided the integral into two identical parts, exchanging the variables in the process; then we have added all the \(T\)-translated mollifiers for each variable (adding 0 since the supports are disjoint for \(k\neq0\)) to achieve symmetry and combined the two integrals together once again. 

Now, we observe that the expression above is well defined. Indeed, let us consider only the term with \(k=0\), since every other term in the sum is bounded:
\[
    I \coloneqq \int_{B_\eps(x_i)}\int_{B_\eps(x_j)} \frac{\left\lvert u^\eps(x)-u^\eps(y)\right\rvert^{p-2} }{|x-y|^{1+sp}}\left(\rho_\eps(x-x_j)- \rho_\eps(y-x_j)\right)\left(\rho_\eps(x-x_i) -\rho_\eps(y-x_i) \right)\ud y \ud x.
\]
Using a Taylor expansion around \(x\) with respect to \(y\) we get:
\[
    |u^\eps(x)-u^\eps(y)|^{p-2} = |(u^\eps)'(x)(y-x)+O(|x-y|^2)|^{p-2} = |(u^\eps)'(x)|^{p-2}|x-y|^{p-2} + O(|x-y|^{p-1}),
\]
if \((u^\eps)'(x)\neq 0\), otherwise the quantity above is of one order less, and 
\[
    \rho_\eps(x)-\rho_\eps(y) = \rho_\eps'(x)(y-x)+O(|x-y|^2).
\]
Substituting in I, we get
\[
    I \leq \int_{B_\eps(x_i)}\int_{B_\eps(x_j)} |(u^\eps)'(x)|^{p-2}\|\rho_\eps'\|_\infty^2|x-y|^{p-1-sp} + O(|x-y|^{p-sp}) \ud y \ud x
\]
that is finite for \(p-1-sp>-1\), since \(p(1-s)>0\), which is always verified. We must observe that when \(p\in(1,2)\) and \(u^\eps(x)=u^\eps(y)\) for some \(x\) and \(y\) distinct, then \(\left\lvert u^\eps(x)-u^\eps(y)\right\rvert^{p-2}= O(|x-y|)\). Once again \cref{eq: var seconda pura problematica} follows from \cref{eq: var seconda mista problematica,eq: somma zero hessiana}.

\end{proof}
\begin{remark} \label{rmk: var prima supercritica traslazioni orizz in 0}
    We observe that, for the equispaced configuration, the function \(u^\eps[\bold{X}^\mathrm{eq}]\) is \(1\)-periodic; therefore, \cref{eq: var prima supercritica s>0} is independent of the choice of \(x_i\). Hence, we can centre the first integral in \(x=0\)  and obtain
    \begin{equation*}
        \partial_{x_j} \F^{s,\eps}_p \left[\bold{X}^\mathrm{eq}\right]=\partial_{x_i} \F^{s,\eps}_p \left[\bold{X}^\mathrm{eq}\right] = 2p \int_{B_\eps} \int_{\R} \frac{ \left\lvert u^\eps(x)-u^\eps(y)\right\rvert^{p-2}(u^\eps(x)-u^\eps(y))}{|x-y|^{1+sp}} \,\ud y \ud x.
    \end{equation*}
\end{remark}
\begin{remark}
    As in the sub-critical case, we can define the matrix \(F^\eps\left[\bold{X} \right] \coloneqq (F^\eps_{ij}\left[\bold{X}\right])_{i,j}\), where \(F^{\eps}_{ij} \left[ \bold{X} \right] \coloneqq \partial^2_{x_i x_j}\F^{s,\eps}_p\left[\bold{X}\right]\). It is easy to check that, also in this setting, the sum of the elements in every row (or column) is zero.
\end{remark}

\begin{theorem}
Let \(\eps>0\), \(p\geq 1\), \(s\in(0,1)\) such that \(sp\geq 1\). Then, the configuration \(\bold{X}^\mathrm{eq}\) is the unique minimiser for the functional \(\F^{s,\eps}_p\) in the space \(\X_T^{4\eps}\).
\end{theorem}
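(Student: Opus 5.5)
The plan mirrors the sub-critical argument of \Cref{prop: crit per s>0} and \Cref{thm: min s>0}, using the formulas of \Cref{prop: rigid variations super-critical} on the restricted class \(\X^{4\eps}_T\).

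\emph{Step 1: \(\bold X^{\mathrm{eq}}\) is critical.} By \cref{rmk: var prima supercritica traslazioni orizz in 0}, all partial derivatives at \(\bold X^{\mathrm{eq}}\) coincide with the single quantity
\[
c\coloneqq 2p\int_{B_\eps}\int_\R \frac{|u^\eps(x)-u^\eps(y)|^{p-2}(u^\eps(x)-u^\eps(y))}{|x-y|^{1+sp}}\,\rho_\eps(x)\,\ud y\,\ud x .
\]
To show \(c=0\), I would use that \(\F^{s,\eps}_p\) is invariant under the simultaneous translation \(\bold X\mapsto \bold X+a\). Differentiating in \(a\) gives \(\sum_i \partial_{x_i}\F^{s,\eps}_p[\bold X^{\mathrm{eq}}]=0\), so \(Tc=0\). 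Alternatively, one can argue by symmetry. Set \(x=-x'\), \(y=-y'\) and use that \(\rho\) is even, so that \(u^\eps(-x)\) differs from \(-u^\eps(x)\) only by an additive constant when \(u\) is taken odd about its jump. Then the integrand is odd under this change of variables, hence \(c=0\). The translation argument is cleaner and does not need the symmetry of \(\rho\).

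\emph{Step 2: strict convexity on \(\X^{4\eps}_T\).} For \(\bold X\in\X^{4\eps}_T\), formula \cref{eq: var seconda mista supercritica s>0} shows that every off-diagonal entry \(F^\eps_{ij}\) is nonpositive. Indeed \(p(p-1)>0\), and \(\rho_\eps\ge0\) and \(|u^\eps(x)-u^\eps(y)|^{p-2}\ge0\) are integrated against a positive kernel. The entry is in fact strictly negative: the \(k=0\) term alone has an integrand that is positive on a set of positive measure, because \(u^\eps\) is nonconstant on the relevant balls. Zero row sums, \cref{eq: var seconda pura supercritica s>0}, let me repeat the computation \cref{eq: conto positività var seconda}:
\[
F^\eps\xi\cdot\xi=-\sum_{i<j}F^\eps_{ij}(\xi_i-\xi_j)^2 .
\]
This is strictly positive for every \(\xi\) not proportional to \((1,\dots,1)\). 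The degenerate direction \((1,\dots,1)\) is exactly horizontal translation, which is quotiented out in the uniqueness statement. For \(p=1\) the prefactor \(p(p-1)\) vanishes, so I would check separately whether this case is excluded or handled as in \Cref{lemma: cuspidi p=1}. The formulas of \Cref{prop: rigid variations super-critical} are stated for \(p>1\), so I would restrict to that case.

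\emph{Step 3: conclusion.} The set \(\X^{4\eps}_T\) is convex in the coordinates \((x_1,\dots,x_T)\), up to translations: it is an intersection of half-spaces \(x_{i+1}-x_i\ge4\eps\), together with the cyclic constraint \(x_1+T-x_T\ge 4\eps\). On this set the energy restricted to the hyperplane \(\sum x_i=\text{const}\) is strictly convex. A minimiser exists by continuity and compactness modulo translations. The first derivative vanishes at \(\bold X^{\mathrm{eq}}\), which is an interior point since its spacing is \(1\ge 4\eps\), assuming \(\eps<1/4\). Strict convexity then makes \(\bold X^{\mathrm{eq}}\) the unique minimiser up to translations.

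The \textbf{main obstacle} is Step 2 for \(p\in(1,2)\). There the weight \(|u^\eps(x)-u^\eps(y)|^{p-2}\) is singular where \(u^\eps(x)=u^\eps(y)\), and I must ensure the second variation is genuinely finite and that the differentiation under the integral is justified. I would rely on the integrability discussion in the proof of \Cref{prop: rigid variations super-critical}, combined with the separation \(4\eps\), which keeps \(B_\eps(x_i)\) and \(B_\eps(x_j)\) at distance at least \(2\eps\). This separation makes the kernel \(|x-y-kT|^{-1-sp}\) bounded on the support of the integrand.
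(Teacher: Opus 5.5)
Your proposal is correct and has the same skeleton as the paper's proof. You first show that \(\bold X^{\mathrm{eq}}\) is critical. You then use the zero row sums to write \(F^\eps\xi\cdot\xi=-\sum_{i<j}F^\eps_{ij}(\xi_i-\xi_j)^2\), with \(F^\eps_{ij}\le 0\) read off from \cref{eq: var seconda mista supercritica s>0}; this is exactly where \(\ud[\bold X]\ge 4\eps\) enters.

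The genuinely different ingredient is your Step 1.
\begin{itemize}
\item The paper gets criticality from the oddness of \(u^\eps[\bold X^{\mathrm{eq}}]\) and the symmetry of the integration domain in \cref{rmk: var prima supercritica traslazioni orizz in 0}. This uses that \(\rho\) is even. It also needs a symmetric reading of the inner \(y\)-integral, which converges only as a principal value when \(p(1-s)\le 1\).
\item Your route combines translation invariance, which gives \(\sum_i\partial_{x_i}\F^{s,\eps}_p\equiv 0\), with the \(1\)-periodicity of \(u^\eps[\bold X^{\mathrm{eq}}]\), which makes all partials equal. It needs neither evenness of \(\rho\) nor any principal-value manipulation.
\item The cost is mild: you must know \(\F^{s,\eps}_p\) is \(C^1\) near \(\bold X^{\mathrm{eq}}\) so the chain rule applies. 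Continuity in \(\bold X\) of the explicit first-variation formula suffices.
\end{itemize}

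Your Steps 2--3 also supply details the paper leaves implicit.
\begin{itemize}
\item The quadratic form is only semidefinite and degenerates exactly along \((1,\dots,1)\), so "positive definite" must be read modulo translations.
\item The paper only asserts \(F^\eps_{ij}\le 0\). You need strict negativity of the off-diagonal entries, or at least connectivity of the graph of negative entries, and you provide it.
\item Convexity of \(\X^{4\eps}_T\), in ordered lifted coordinates, is what lets the Hessian bound propagate along segments and turn a critical point into the unique minimiser.
\end{itemize}

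Finally, the case \(p=1\) that you set aside is vacuous: \(sp\ge 1\) and \(s<1\) force \(p>1\). So \Cref{prop: rigid variations super-critical} covers the whole statement.
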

\begin{proof}
We start by showing, as we did in the sub-critical case, that the first variation is zero when computed on the equispaced configuration \(\bold{X}^\mathrm{eq}\). Indeed, when calculated on the equispaced configuration, the function \(u^\eps[\bold{X}^\mathrm{eq}]\) is odd, and then the first variation is zero by symmetry on the integration domain (see \cref{rmk: var prima supercritica traslazioni orizz in 0}). The next step is showing that the matrix \(F^\eps\left[\bold{X} \right] \)  is positive definite. This follows from the same computation in \cref{eq: conto positività var seconda}, using \cref{rmk: somma zero}, which leads to 
\[
F^{\eps}[\bold{X}] \xi\cdot\xi = -\sum_{i=1}^T \sum_{j>i} F^\eps_{ij} \left(\xi_i-\xi_j\right)^2,
\]
for every \(\xi \in \R^T\). Moreover, we have that for every \(i,j \in \{1,\ldots,T\}\), where \(i \neq j\), it holds true from \cref{eq: var seconda mista supercritica s>0} that \( F^\eps_{ij}\leq 0\). This is where the hypothesis \(\ud[\bold X]\geq 4\eps\) comes into play.

\end{proof}

Some considerations are in order regarding the super-critical regime. Looking at \cref{eq: var seconda mista problematica}, when jump points are less than \(4\eps\) apart, the overlapping interactions of the convolution kernels prevent us from easily determining the sign of the second variation. Indeed, while the interaction of points outside the overlap region yields a negative contribution, the points falling within the intersection of the mollifiers' supports generate a positive one. As a result, a quantitative estimate establishing the sign of the matrix \(F^\eps\) for closely packed points is not easily achievable.

Furthermore, perturbative arguments based on the first variation, such as the one in \Cref{lemma: cuspidi}, inevitably fail in this setting. Due to the $\eps$-regularisation, configurations with overlapping jumps no longer behave as isolated cusps of the energy; instead, they become critical points, meaning the energy locally loses its strict convexity.

Finally, extending the strategy of \Cref{lemma: cuspidi p=1} to the super-critical case is far from computationally trivial. In principle, one would like to prove that configurations with a fixed minimal distance are always energetically favourable compared to those where the minimal distance shrinks with \(\eps\). However, in the super-critical case, the main obstruction is that \virgolette{core} interactions, i.e. those between points at a distance of order \(\eps\), exhibit a blow-up that is not of lower order compared to macroscopic interactions.

\subsection{Some further results for the critical case \(sp=1\)}

In the critical regime, one can observe that core interactions are negligible. Provided that, we are able to prove that the energy of a configuration whose minimal distance \(\delta(\eps) \to 0\) as \(\eps \to 0\) grows like \(-\log\eps\).

\begin{proposition} \label{prop: caso critico salvo}
	Let \(s\in(0,1)\), \(p> 1\) such that \(sp= 1\), \(\delta>0\) and \(R>\delta\) fixed. Then
	\begin{equation} \label{eq: stima liminf caso critico}
	\liminf_{\eps\to 0} \inf_{\substack{\bold X \in \X_T\\ \ud[\bold X]\leq \delta}}\left(\F^{s,\eps}_p[\bold X] + (2^{2-p})T\log\eps \right)\geq -4(1-2^{1-p}) \log\delta + C(p,T,R).
	\end{equation}
\end{proposition}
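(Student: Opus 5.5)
We obtain \cref{eq: stima liminf caso critico} as a lower bound on $\F^{s,\eps}_p[\bold X]$ with $sp=1$, uniform over configurations with $\ud[\bold X]\le\delta$. The key fact is that every mollified jump of unit height costs a self-energy of order $-\log\eps$, and two jumps at distance $\le\delta$ contribute an extra logarithmic term governed by $\log\delta$.

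We proceed in four steps.

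\emph{Step 1 (localisation).} Fix $\bold X$, $u^\eps=u^\eps[\bold X]$, $R>\delta$ as in the statement. Since the integrand is nonnegative, we may discard interactions. Write $\F^{s,\eps}_p[\bold X]\ge \int_0^T\int_{|x-y|<r}\frac{|u^\eps(x)-u^\eps(y)|^p}{|x-y|^{2}}\,\ud y\ud x$ for a small radius $r\le R$ to be chosen. Then group the jump points of $\bold X$ into clusters, where two points lie in the same cluster if they are at distance $\le r$.

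\emph{Step 2 (isolated jump).} For an isolated jump of unit height, we compute the energy of a unit step mollified at scale $\eps$, restricted to $|x-y|<r$. The slope $1$ only perturbs this at order $O(1)$.
\begin{itemize}
\item The region where $x$ and $y$ lie on opposite sides at distance $\gg\eps$ gives $\int\int_{\eps<|x-y|<r}\frac{\ud y\,\ud x}{|x-y|^2}$ restricted to opposite sides. This yields $2\log(r/\eps)+O(1)$, counting both orderings since $y$ ranges over $\R$ and $x$ over the cell.
\item The core region is $O(1)$ by scaling: for $sp=1$, $\F^{s}_p(\rho_\eps$-step$)$ restricted to $|x-y|\lesssim\eps$ is scale invariant.
\end{itemize}
The constant in front of $-\log\eps$ must then be matched to $2^{2-p}T$. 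I expect the paper's normalisation to require redoing this carefully, possibly using the interpolating convexity $|a|^p+|b|^p\ge 2^{1-p}|a+b|^p$. I would compute it explicitly with the model profile $H\ast\rho_\eps$.

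\emph{Step 3 (close pair).} This is the main obstacle. Take two points $x_1,x_2$ with $|x_1-x_2|=d\le\delta$, which exist by assumption. We need a lower bound on the pair that produces the extra $-4(1-2^{1-p})\log\delta$ compared with the sum of separated jumps. The planned decomposition is by scale:
\begin{itemize}
\item At scales $\eps\ll|x-y|\ll d$, each jump is seen separately, giving $2\cdot 2\log(d/\eps)$ per jump up to normalisation.
\item At scales $d\ll|x-y|\ll r$, the two jumps merge into a jump of height $2$, giving $2^p\cdot 2\log(r/d)$ in place of $2\cdot2\log(r/d)$.
\end{itemize}
To make this rigorous, I would use the convexity inequality on $|u^\eps(x)-u^\eps(y)|^p$ when the increment crosses both jumps. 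I would also restrict to annuli where cross terms can be controlled, since $u^\eps$ is monotone across a jump cluster up to the slope-$1$ part. The difference between the merged-jump coefficient and the separated coefficient, after normalisation, should produce $(1-2^{1-p})$ times $\log(r/d)\ge\log(r/\delta)$, which gives the $-\log\delta$ term. The $\log r$ and $\log R$ pieces are absorbed into $C(p,T,R)$. The delicate part is to make the scale separation quantitative uniformly in $d\in[c\eps,\delta]$; the case $d\lesssim\eps$ is handled directly by the core being $O(1)$. The same applies to clusters containing more than two points, where I would reduce to the worst case by monotonicity in the number of merged jumps.

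\emph{Step 4 (summation and limit).} Sum the lower bounds over all $T$ jumps, which gives $-2^{2-p}T\log\eps+O(1)$ after the $\log\eps$ terms combine, plus the pair gain. Add $2^{2-p}T\log\eps$, take the infimum over $\bold X$ (the constants are uniform), and then take $\liminf_{\eps\to0}$.
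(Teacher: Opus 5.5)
\textbf{Verdict: genuine gap.} Your route differs from the paper's: you localise at a scale $r$, cluster the jumps, compute the self-energy of an isolated mollified jump, and then run a two-scale analysis on close pairs. As written, the decisive step is missing.

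\emph{What is missing.} Step~3, which you yourself call the main obstacle, is a heuristic for a single isolated pair at a single separation $d$. But the infimum inside the $\liminf$ runs over \emph{every} $\bold{X}$ with $\ud[\bold{X}]\le\delta$. That includes clusters with up to $T$ points whose mutual distances are spread over all scales in $[0,r]$, and the regime $d\approx\eps$ where the two mollified transitions overlap. Three things are asserted rather than proved:
\begin{itemize}
\item the $O(1)$ errors at the transition scale $|x-y|\approx d$;
\item the control of the slope-$1$ part inside a cluster;
\item the reduction of larger clusters ``to the worst case by monotonicity in the number of merged jumps''.
\end{itemize}
These are exactly what the required uniformity in $\bold{X}$ consists of, so the proposal does not yet establish the inequality.

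\emph{How the paper avoids this.} No localisation is needed. The paper writes $\F^{s,\eps}_p[\bold{X}]=2\int_0^{+\infty}h^{-2}\int_0^T|u^\eps(x+h)-u^\eps(x)|^p\,\ud x\,\ud h$, with $u^\eps(x+h)-u^\eps(x)=h-\mu^{\bold{X}}_\eps([x,x+h])$. It then:
\begin{itemize}
\item splits off the slope by $|a-b|^p\ge 2^{1-p}|a|^p-|b|^p$, which is the origin of the factor $2^{2-p}$;
\item discards $h<2\eps$;
\item uses $\mu^{\bold{X}}_\eps([x,x+h])\ge\mu^{\bold{X}}([x+\eps,x+h-\eps])$, which is an integer.
\end{itemize}
Two facts hold for \emph{all} configurations: $\int_0^T\mu^{\bold{X}}([y,y+t])\,\ud y=Tt$, and $|\{y\in[0,T):\mu^{\bold{X}}([y,y+t])\ge 2\}|\ge(t-\ud[\bold{X}])_+$. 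Together with $k^p\ge k+(2^p-2)\rchi_{[2,+\infty)}(k)$, they yield both logarithms with constants independent of $\bold{X}$.

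\emph{On the constant.} You never need to ``match'' $2^{2-p}T$: a larger coefficient in front of $-\log\eps$ only helps. In the same counting argument, replace the paper's splitting by $(k-h)^p\ge k^p(1-h)^p$, valid for integers $k\ge 1$ and $0\le h<1$. This gives $\F^{s,\eps}_p[\bold{X}]\ge 2T\log(1/\eps)-C(p,T)$ uniformly in $\bold{X}$, which is your sharp per-jump constant. Since $2>2^{2-p}$ for $p>1$, this alone makes the $\liminf$ in the statement equal to $+\infty$, and your Step~3 becomes unnecessary.
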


\begin{proof} Defining \(\mu_\eps^{\bold X} \coloneqq \mu^{\bold X} * \rho_\eps\) and applying the change of variables \(y\mapsto x+h\) to the energy we write
\begin{align}
		\F^{s,\eps}_p[\bold X] &= \int_0^T\int_\R \frac{|u^\eps(x)-u^\eps(x+h)|^p}{|h|^2} \,\ud h \ud x \nonumber\\
		&= 2\int_0^{+\infty} \frac{1}{h^2}\int_0^T |u^\eps(x+h)-u^\eps(x)|^p \ud x \ud h \nonumber\\
		&\geq 2\int_0^{R} \frac{1}{h^2}\int_0^T |\mu_\eps^{\bold X}([x,x+h])-h|^p \ud x \ud h \nonumber\\
		& \geq 2\int_0^{R} \frac{1}{h^2}\int_0^T 2^{1-p}|\mu_\eps^{\bold X}([x,x+h])|^p - h^p \ud x \ud h \nonumber\\
		&= 2^{2-p}\int_0^{R} \frac{1}{h^2}\int_0^T |\mu_\eps^{\bold X}([x,x+h])|^p \ud x \ud h - \frac{2R^{p-1}T}{p-1}, \label{eq: energia con misura mollificata}
\end{align}
where we have used the inequality \(|a-b|^p\geq 2^{1-p}|a|^p-|b|^p\), true for \(p\geq 1\). Observe that, using Jensen inequality on the convolution integral, and using the absolute continuity of \(\mu_\eps^{\bold X}\) we have
\begin{equation} \label{eq: eh voleeeevi}
        \int_0^{2\eps}\frac{1}{h^2}\int_0^T |\mu_\eps^{\bold X}([x,x+h])|^p \ud x \ud h  \leq \int_0^{2\eps} C|h|^{p-2} \ud h = O(\eps^{p-1}).
\end{equation}
Therefore, without loss of generality, let \(h>2\eps\) and take \(\eta\in(-\eps,\eps)\), then 
\[
	[x-\eta,x+h-\eta] \supset [x+\eps,x+h-\eps]
\]
and, since the measures \(\mu_\eps^{\bold X}\) and \(\mu^{\bold X}\) are positive, we can compute
\begin{align}
	\mu_\eps^{\bold X}([x,x+h]) &= \int_x^{x+h} \mu_\eps^{\bold X}(y) \ud y = \int_{-\eps}^{\eps} \mu^{\bold X}([x-\eta,x+h-\eta]) \rho^\eps(\eta) \ud \eta \label{eq: misura demollificata} \\
	&\geq  \int_{-\eps}^{\eps}\mu^{\bold X}([x+\eps,x+h-\eps]) \rho^\eps(\eta) \ud \eta \nonumber\\
	&=  \mu^{\bold X}([x+\eps,x+h-\eps]).\nonumber
\end{align}
By strict monotonicity of \(z\mapsto z^p\) for \(z>0\), substituting \cref{eq: misura demollificata} into \cref{eq: energia con misura mollificata} yields 
\begin{equation} \label{eq: stima misure discrete}
	\F^{s,\eps}_p[\bold X]  \geq 2^{2-p} \int_{2\eps}^R \frac{1}{h^2}\int_0^T |\mu^{\bold X}([x+\eps,x+h-\eps])|^p \ud x \ud h - \frac{2R^{p-1}T}{p-1}.
\end{equation}

\begin{figure}[h]
	\centering
	\begin{tikzpicture}
        \draw[line width=4pt, black!20] (0,0) -- (.5,0);
        \draw[line width=4pt, black!20] (1,0) -- (2.5,0);
        \draw[line width=4pt, black!20] (4.5,0) -- (5.5,0);
        \node at (1.5,0) [above]{\footnotesize \(A_t(1)\)};
        
		\draw[->] (-1,0) -- (7,0);
		\draw[->] (0,-1) -- (0,4);
		\node at (7,0) [below]{\(x\)};
		\node at (0,4) [left]{\(\mu^{\bold X}([x,x+t])\)};
		\draw[thick, -{stealth}] (0,-.5) -- (1,-.5) node[below=-1pt]{t};
		\node at (0,1) [left]{\(1\)};
		\node at (0,2) [left]{\(2\)};
		\node at (0,3) [left]{\(3\)};
		
		\draw[dashed,black!30] (.5,1) -- (.5,0);
		\draw[dashed,black!30] (1,1) -- (1,0);
		\draw[dashed,black!30] (2.5,3) -- (2.5,1);
		\draw[dashed,black!30] (3,3) -- (3,2);
		\draw[dashed,black!30] (3.5,2) -- (3.5,0);
		\draw[dashed,black!30] (4.5,1) -- (4.5,0);
		\draw[dashed,black!30] (5.5,1) -- (5.5,0);
		

		\draw (.5,-.1) -- (.5,.1);
		\node at (.5,0) [below]{\(x_1\)};
		\draw (2,-.1) -- (2,.1);
		\node at (2,0) [below]{\(x_2\)};
		\draw (3,-.1) -- (3,.1);
		\node at (3,0) [below]{\(x_3\)};
		\draw (3.5,-.1) -- (3.5,.1);
		\node at (3.55,0) [below]{\(x_4\)};
		\node at (3.55,-.25) [below]{\(x_5\)};
		\draw (5.5,-.1) -- (5.5,.1);
		\node at (5.5,0) [below]{\(x_6\)};
		
		\draw[thick] (0,1) -- (.5,1);
		\filldraw (.5,1) circle (1pt);
		\draw[thick] (.5,0) -- (1,0);
		\filldraw[fill=white] (.5,0) circle (1pt);
		\draw[thick] (1,1) -- (2.5,1);
		\filldraw[fill=white] (1,0) circle (1pt);
		\filldraw (1,1) circle (1pt);
		\filldraw (2,2) circle (1pt);
		\filldraw[fill=white] (2,1) circle (1pt);
		\draw[thick] (2.5,3) -- (3,3);
		\filldraw[fill=white] (2.5,1) circle (1pt);
		\filldraw (2.5,3) circle (1pt);
		\draw[thick] (3,2) -- (3.5,2);
		\filldraw (3,3) circle (1pt);
		\filldraw[fill=white] (3,2) circle (1pt);
		\filldraw (3.5,2) circle (1pt);
		\draw[thick] (3.5,0) -- (4.5,0);
		\filldraw[fill=white] (3.5,0) circle (1pt);
		
		\draw[thick] (4.5,1) -- (5.5,1);
		\filldraw[fill=white] (4.5,0) circle (1pt);
		\filldraw (4.5,1) circle (1pt);
		
		\draw[thick] (5.5,0) -- (6,0);
		\filldraw[fill=white] (5.5,0) circle (1pt);
		\filldraw (5.5,1) circle (1pt);

        \draw[thick,dashed] (6,0) -- (6.8,0);

	\end{tikzpicture}
	\label{fig: misura che conta i salti}
	\caption{An example of how the measure \(\mu^{\bold X}\) behaves for a particular choice of \(\bold X\) and a fixed \(t\). For convenience the set \(A_t(1)\) has been highlighted.}
\end{figure}
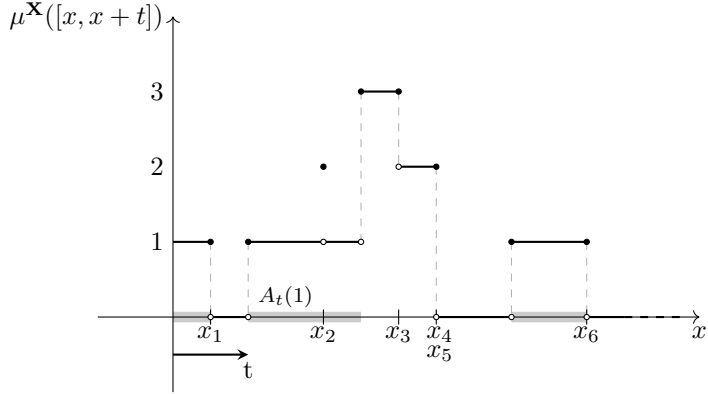%
\noindent
Now, by taking 
\[
	A_t(k) \coloneqq \{x\in[0,T)\, |\, \mu^{\bold X}([x,x+t])=k\},
\]
we apply the change of variables \(y=x+\eps\) and \(t=h-2\eps\) in \cref{eq: stima misure discrete}, using periodicity of \(\mu^{\bold X}\), yielding 
\begin{align*}
		\F^{s,\eps}_p[\bold X] &\geq 2^{2-p} \int_{0}^{R-2\eps} \frac{1}{(t+2\eps)^2}\int_0^T |\mu^{\bold X}([y,y+t])|^p \ud y \ud t - \frac{2R^{p-1}T}{p-1}\\
		& = 2^{2-p} \int_{0}^{R-2\eps} \frac{1}{(t+2\eps)^2} \sum_{k=1}^{+\infty}k^p|A_t(k)|  \ud t - \frac{2R^{p-1}T}{p-1}\\
		&\geq 2^{2-p} \int_{0}^{R-2\eps} \frac{1}{(t+2\eps)^2} \left( \sum_{k=1}^{+\infty}k|A_t(k)| + (2^p-2)\sum_{k=2}^{+\infty}|A_t(k)| \right) \ud t - \frac{2R^{p-1}T}{p-1},
\end{align*}
where we used the inequality \(k^p \geq k + (2^p-2)\rchi_{[2,+\infty)}(k)\) for \(k\in\N\) and \(p\geq 1\). Now observe that we can write 
\[
	\sum_{k=2}^{+\infty}|A_t(k)| = |\{x\in [0,T) \,|\, \mu^{\bold X}([x,x+t])\geq 2\}| \geq (t-\delta_{\bold X})_+,
\]
where we denote \(0\leq \delta_{\bold X} \coloneqq \ud[\bold X]\ \leq \delta\). By definition of \(\delta_{\bold X}\), if we take \(t>\delta_{\bold X}\), there exists at least an interval (of size \(t-\delta_{\bold X}\)) contained in the set above. Observe also that 
\[
	\sum_{k=1}^{+\infty}k|A_t(k)| = \int_0^T \mu^{\bold X}([y,y+t]) \ud y = \int_0^T \sum_{x_i \in \bold X} \rchi_{[y,y+t]}(x_i) \ud y = \sum_{x_i \in \bold X} \int_0^T \rchi_{[x_i-t, x_i]}(y) \ud y = \sum_{x_i \in \bold X} t = Tt
\]
indeed the measure \(\mu^{\bold X}\) counts exactly \(T\) jumps in a period. Putting everything together we get 
\begin{align*}	
		\F^{s,\eps}_p[\bold X] &\geq 2^{2-p} \int_{0}^{R-2\eps} \frac{Tt}{(t+2\eps)^2}\, \ud t + 4(1-2^{1-p})\int_{0}^{R-2\eps} \frac{(t-\delta_{\bold X})_+}{(t+2\eps)^2}\,  \ud t- \frac{2R^{p-1}T}{p-1} \\
		&= 2^{2-p}T \int_{2\eps}^{R} \frac{h-2\eps}{h^2} \,\ud h + 4(1-2^{1-p})\int_{2\eps}^{R} \frac{(h-2\eps-\delta_{\bold X})_+}{h^2}\, \ud h- \frac{2R^{p-1}T}{p-1} \\
		&\geq -2^{2-p}T \log\eps - 4(1-2^{1-p}) \log(2\eps+\delta) + C(p,T,R).
\end{align*}

\end{proof}

\begin{remark}
	As anticipated, the strategy employed in the proof of \Cref{prop: caso critico salvo} fails when generalised to the super-critical regime (\(sp > 1\)). Looking at \cref{eq: eh voleeeevi}, we can observe that for \(sp > 1\) the integral no longer yields a vanishing contribution as \(\eps \to 0\). 
	
	The underlying reason is structural. In the critical case (\(sp=1\)), the energy diverges logarithmically. This allows us to isolate and discard the core interactions (distances \(h < 2\eps\)). On one hand, as shown in \cref{eq: eh voleeeevi}, the actual energy contribution of this core region goes to zero as \(\eps \to 0\). On the other hand, the specific choice of the cut-off boundary \(h=2\eps\) is the right one. Indeed, take a general cut-off point \(\delta_\eps\) going to zero as \(\eps\) goes to zero. Then, if \(\delta_\eps = o(\eps)\) the estimate with \(h<2\eps\) still holds. On the other hand if \(\frac{\delta_\eps}{\eps}\to +\infty\) this estimate does not lead to the optimal constant for the logarithmic term. 
	
	Conversely, in the super-critical case (\(sp > 1\)), the energy exhibits a polynomial blow-up of order \(\eps^{1-sp}\). Because of this, the integral over the core \(h < 2\eps\) does not vanish. Instead, it scales with the exact same power of \(\eps\) as the rest of the integral. Thus, the core contribution cannot be ignored or absorbed into a lower-order term. Its value heavily depends on the specific profile of the mollifier \(\rho\) and the chosen cut-off distance, preventing the extraction of an optimal leading-order constant as we did in \cref{eq: stima liminf caso critico}.
\end{remark}

\subsection{Convexity argument and minimisers for \(\F^0_p\)} 
We now show that the equispaced configuration is the unique minimiser for \(\F^0_p\) within the class \(\mathbb{X}_T\). To this end, we employ a convexity argument, inspired by \cite{Müller93} for \(p=2\), based on the lengths of the affine segments between the jumps, and later expanded upon by \cite{RenWei03} for a more general functional, still in the case \(p=2\).

Let us recall the  \(\Gamma\)-limit of \(s\F^s_p\) as \(s \to 0^+\) in \(d=1\):
\[
    \F^0_p(u)= \int_0^T\int_0^T |u(x)-u(y)|^p  \ud y \ud x,
\]
where we recall \(T \in \N\). We observe that, given \(\bold X \in \X_T\), we can write the energy as depending only on the length of the intervals between the jump points. To facilitate the use of symmetry arguments, it is convenient to centre the integration domains at the origin
\begin{equation*}
\F^0_p(u[\bold X])= \sum_{i=1}^{T} \sum_{j=1}^T \Ff(L_i,L_j,c_{ij})\coloneqq \sum_{i=1}^{T} \sum_{j=1}^T \int_{-L_i}^{L_i} \int_{-L_j}^{L_j} |x-y-c_{ij}|^p \ud y \ud x,
\end{equation*}
where \(L_i,\,L_j\in\R^+\) and \(c_{ij}\in\R\) are chosen such that the following equality holds
\[
\Ff(L_i,L_j,c_{ij}) =\int_{x_{i-1}}^{x_{i}} \int_{x_{j-1}}^{x_{j}} |u(x)-u(y)|^p \ud y \ud x.
\]
From now on, we are going to identify \(\F^0_p\left[\bold{L}\right] \coloneqq\F^0_p\left[\bold{X}\right] \coloneqq\F^0_p(u[\bold{X}])\), where \(\bold{L}=\left(L_1,\ldots,L_T\right) \in \R^T\), with each \(L_i\) defined as \(2 L_i\coloneqq |x_{i+1}-x_i|\), which satisfies the constraint \(2\sum_{i=1}^{T}L_i=T\) for every configuration.

Let us observe that, in the equality above, the quantity \(c_{ij}\) is uniquely determined by \(L_i\) and \(L_j\). Nevertheless, it will be useful in the upcoming computations to think of \(\Ff\) as a function of three variables we can manipulate independently.

Before stating the final result, we give a preliminary Lemma.

\begin{lemma} \label{lemma: min per c=0}
For every \(p\geq 1\), \(\ell,\ell' \in [0,+\infty)\) and \(c \in \R\), the function \(c \mapsto \Ff(\ell,\ell',c)\) has the unique minimiser \(c=0\).
\end{lemma}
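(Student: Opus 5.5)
The plan is to rewrite \(\Ff(\ell,\ell',c)\) as the integral of the convex even function \(t\mapsto|t|^p\) against the law of \(x-y\), and then exploit the symmetry and unimodality of that law. Concretely, with the change of variables \(z=x-y\) we obtain
\[
\Ff(\ell,\ell',c)=\int_\R |z-c|^p\, k(z)\,\ud z,\qquad k\coloneqq \rchi_{[-\ell,\ell]}\ast\rchi_{[-\ell',\ell']},
\]
where \(k\) is a trapezoidal (or triangular) function. It is even, non-negative, non-increasing in \(|z|\), and has total mass \(4\ell\ell'\). The degenerate cases \(\ell=0\) or \(\ell'=0\) are dealt with first: there \(\Ff\equiv 0\), so the claimed uniqueness only holds in the non-degenerate case. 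I will state this explicitly and afterwards assume \(\ell,\ell'>0\).

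\textbf{Step 1 (symmetry and convexity).} The map \(c\mapsto \Ff(\ell,\ell',c)\) is convex and finite, being an average of the convex functions \(c\mapsto|z-c|^p\). Because \(k\) is even, the substitution \(z\mapsto -z\) gives \(\Ff(\ell,\ell',c)=\Ff(\ell,\ell',-c)\). Convexity then yields
\[
\Ff(\ell,\ell',0)\le\tfrac12\bigl(\Ff(\ell,\ell',c)+\Ff(\ell,\ell',-c)\bigr)=\Ff(\ell,\ell',c),
\]
so \(c=0\) is a minimiser.

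\textbf{Step 2 (uniqueness).} For \(p>1\), the function \(c\mapsto|z-c|^p\) is strictly convex, and \(k\) is positive on a set of positive measure. Hence \(\Ff\) is strictly convex in \(c\) and the minimiser is unique.

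For \(p=1\) strict convexity fails, and I expect this to be the main obstacle. Here I will compute the derivative directly:
\[
\partial_c\Ff(\ell,\ell',c)=\int_\R \operatorname{sgn}(c-z)\,k(z)\,\ud z=\int_{-\infty}^{c}k-\int_{c}^{+\infty}k .
\]
For \(c>0\), the evenness of \(k\) turns this into \(2\int_0^c k(z)\,\ud z\). This quantity is strictly positive, since \(k>0\) on \((-(\ell+\ell'),\ell+\ell')\), which is a neighbourhood of \(0\). Consequently \(\Ff\) is strictly increasing on \([0,+\infty)\), and by symmetry strictly decreasing on \((-\infty,0]\), so \(0\) is the unique minimiser. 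In fact the same derivative argument also covers \(p>1\), where the derivative becomes
\[
\partial_c\Ff=p\int \bigl|c-z\bigr|^{p-2}(c-z)\,k(z)\,\ud z .
\]
Pairing \(z\) with \(-z\) and using \(|c-z|>|c+z|\) for \(z<0<c\) shows this is positive. Either route gives the conclusion.
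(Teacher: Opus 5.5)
Your proof is correct. It uses the same two ingredients as the paper, namely convexity of $c\mapsto\Ff(\ell,\ell',c)$ and the symmetry $(x,y)\mapsto(-x,-y)$, but organises them differently, and the difference matters.

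The paper checks $\partial_c\Ff(\ell,\ell',0)=0$ by symmetry and writes $\partial_c^2\Ff=p(p-1)\int\!\!\int|x-y-c|^{p-2}\,\ud y\,\ud x\geq 0$. You get minimality of $c=0$ from evenness plus midpoint convexity, with no differentiation at all, and you treat uniqueness separately.

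For $p>1$ the paper's route becomes complete once two things are done. The inequality $\geq$ must be upgraded to $>$, which is true for $\ell,\ell'>0$. The double differentiation under the integral with the singular weight $|x-y-c|^{p-2}$, for $1<p<2$, must also be justified. Your strict-convexity argument sidesteps both points.

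For $p=1$ the paper's formula gives nothing. The factor $p(p-1)$ vanishes while the integral diverges logarithmically for $|c|<\ell+\ell'$; the actual distributional second derivative is $2k(c)$. So your explicit computation $\partial_c\Ff=2\int_0^c k>0$ for $c>0$ is what really proves uniqueness there.

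Finally, you are right that the statement is false as written when $\ell\ell'=0$, since then $\Ff\equiv0$. This is harmless for \Cref{thm: sistenza minimo per s=0}, which only uses the inequality $\Ff(\ell,\ell',c)\geq\Ff(\ell,\ell',0)$.
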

\begin{proof}
Indeed, we have
\[
    \left.\frac{\ud}{\ud c} \Ff(\ell, \ell',c) \right|_{c=0} =-p \int_{-\ell}^{\ell} \int_{-\ell'}^{\ell'} (x-y) |x-y|^{p-2}\ud y \ud x =0
\]
due to its symmetry; moreover,
\begin{align*}
    \frac{\ud^2}{\ud c^2} \Ff(\ell, \ell',c)& =p(p-1)\int_{-\ell}^{\ell} \int_{-\ell'}^{\ell'} |x-y-c|^{p-2} \ud y \ud x \geq 0\,,
\end{align*}
for every \(c \in \R\).

\end{proof}

\begin{theorem} \label{thm: sistenza minimo per s=0}
	Let \(p\geq 1\). The configuration \(\bold{X}^\mathrm{eq}\) is (up to translations) the unique minimiser for the functional \(\F^0_p\) in the space \(\X_T\).
\end{theorem}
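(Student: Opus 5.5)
The plan is to reduce $\F^0_p$ to a function of the half-lengths $\bold L=(L_1,\ldots,L_T)$ alone, by discarding the offsets $c_{ij}$ with \Cref{lemma: min per c=0}. We then show that this reduced function is minimised exactly at $L_i\equiv\frac12$, and finally check that the equispaced configuration attains the reduced minimum, i.e. all its offsets vanish. Concretely, set
\[
\Fff(\ell,\ell')\coloneqq \Ff(\ell,\ell',0), \qquad \mathcal G[\bold L]\coloneqq \sum_{i=1}^T\sum_{j=1}^T \Fff(L_i,L_j).
\]
By \Cref{lemma: min per c=0} we have $\F^0_p[\bold X]\ge \mathcal G[\bold L]$ for every $\bold X\in\X_T$, since each summand $\Ff(L_i,L_j,c_{ij})$ is bounded below by its value at $c=0$. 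For $p>1$ the second $c$-derivative is strictly positive whenever $\ell,\ell'$ are not both zero, so the inequality is strict unless every $c_{ij}=0$. For $p=1$ strictness should still hold, because $\Ff(\ell,\ell',\cdot)$ is strictly increasing in $|c|$ once $|c|$ exceeds the overlap region; this I would check directly.

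Next I would treat the constrained problem $\min\{\mathcal G[\bold L] : L_i\ge 0,\ 2\sum_i L_i=T\}$. The function $\mathcal G$ is symmetric under permutations of the $L_i$. Hence, if $\mathcal G$ is strictly convex on the affine simplex $\{2\sum_i L_i=T\}$, the unique minimiser is obtained by averaging over permutations (Jensen), giving $L_i=\frac12$ for all $i$. To check convexity I would compute the derivatives of $\Fff$ explicitly. For instance $\partial_\ell \Fff(\ell,\ell')=\int_{-\ell'}^{\ell'}\bigl(|\ell-y|^p+|\ell+y|^p\bigr)\,\ud y$, which gives
\[
\partial^2_{\ell\ell}\Fff=\tfrac{2}{p+1}\bigl(|\ell+\ell'|^{p+1}-|\ell-\ell'|^{p+1}\bigr)\cdot\tfrac{p+1}{1}\cdot\tfrac{1}{p+1}, \qquad \partial^2_{\ell\ell'}\Fff=2\bigl(|\ell+\ell'|^p+|\ell-\ell'|^p\bigr),
\]
up to the exact constants. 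One must then study the quadratic form
\[
\xi\mapsto \sum_{i,j}\bigl(\partial^2_{11}\Fff(L_i,L_j)\,\xi_i^2+2\,\partial^2_{12}\Fff(L_i,L_j)\,\xi_i\xi_j+\partial^2_{22}\Fff(L_i,L_j)\,\xi_j^2\bigr)
\]
only on tangent vectors with $\sum_i\xi_i=0$. As a sanity check, for $p=2$ one finds $\Fff(\ell,\ell')=\frac43(\ell^3\ell'+\ell\ell'^3)$. Then $\mathcal G=\frac{8}{3}\bigl(\sum_j L_j\bigr)\sum_i L_i^3=\frac{4T}{3}\sum_iL_i^3$ on the constraint set, which is strictly convex. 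This confirms that the mixed terms are harmless once the constraint is used.

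The main obstacle is this convexity step for general $p$. $\Fff$ is not jointly convex in $(\ell,\ell')$, since the mixed derivative can dominate, so convexity has to come from the constraint $\sum_i\xi_i=0$. My first attempt would be to write the form as $\sum_{i<j}(\ldots)(\xi_i-\xi_j)^2$ plus a diagonal part, in the spirit of \cref{eq: conto positività var seconda}. If that fails, I would use a Müller-type representation: write $\mathcal G[\bold L]=\int_0^{T}\!\int_0^{T}|v(x)-v(y)|^p$ for the sawtooth $v$ with centred segments, and express the energy through the distribution of the values of $v$. This is a sum over segments of uniform laws on $[-L_i,L_i]$ with weights $2L_i$, i.e. a mixture whose density is $\sum_i \rchi_{[-L_i,L_i]}$. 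Then convexity in $\bold L$ reduces to a rearrangement or convexity statement about this layer-cake density.

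Finally, for $\bold X^{\mathrm{eq}}$ the function $u$ is $1$-periodic and all segments have equal length with equal midpoint values, so all $c_{ij}=0$ and $L_i=\frac12$. Therefore $\F^0_p[\bold X^{\mathrm{eq}}]=\min\mathcal G\le \mathcal G[\bold L]\le\F^0_p[\bold X]$ for every $\bold X\in\X_T$. For uniqueness: if equality holds, strict convexity forces $L_i=\frac12$ for all $i$, so $\bold X$ has equal gaps, i.e. it is a translate of $\bold X^{\mathrm{eq}}$.
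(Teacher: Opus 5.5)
Your reduction $\F^0_p[\bold X]\ge\mathcal G[\bold L]$ via \Cref{lemma: min per c=0}, and your final identification of the equality case, match the paper. The heart of the proof, however, is not proved: you need that $\mathcal G$, restricted to $\{L_i\ge 0,\ 2\sum_i L_i=T\}$, is minimised only at $L_i\equiv\frac12$, and you check this only for $p=2$, otherwise listing strategies you would try.

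For general $p$ you want to derive it from strict convexity of $\mathcal G$ along the hyperplane $\sum_i\xi_i=0$. That means proving positivity of a quadratic form which couples all the $L_i$ through the (generally indefinite) Hessian of $\Fff$. Nothing you wrote establishes this, so as it stands the argument is incomplete.

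Your formula for $\partial^2_{\ell\ell}\Fff$ is also wrong. Differentiating your own (correct) expression for $\partial_\ell\Fff$ gives $\partial^2_{\ell\ell}\Fff(\ell,\ell')=2\bigl((\ell+\ell')^p-|\ell-\ell'|^p\bigr)$.

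The missing idea is that \emph{separate} convexity of $\Fff$ is enough; neither joint convexity (which indeed fails, as you note) nor convexity of $\mathcal G$ on the hyperplane is needed. The formula above shows that $\ell\mapsto\Fff(\ell,\ell')$ is convex on $[0,+\infty)$, and strictly convex when $\ell'>0$. By symmetry the same holds in $\ell'$. Since the same vector $\bold L$ sits in both slots, you can apply Jensen one slot at a time, using $\frac1T\sum_j L_j=\frac12$:
\[
\frac{1}{T^2}\mathcal G[\bold L]=\frac1T\sum_{i=1}^T\frac1T\sum_{j=1}^T\Fff(L_i,L_j)\ \ge\ \frac1T\sum_{i=1}^T\Fff\bigl(L_i,\tfrac12\bigr)\ \ge\ \Fff\bigl(\tfrac12,\tfrac12\bigr)=\frac{1}{T^2}\mathcal G[\bold L^{\mathrm{eq}}].
\]
Strict convexity of $\ell\mapsto\Fff(\ell,\frac12)$ makes the last inequality strict unless all $L_i=\frac12$. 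This is exactly the paper's argument, and it closes the gap in a few lines.

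Your discussion of strictness in $c$ (including the unchecked case $p=1$) can be dropped. It is also misstated: if one of $\ell,\ell'$ vanishes, then $\Ff\equiv 0$ in $c$. As your last paragraph already shows, uniqueness follows from the $\bold L$-step alone, because equal gaps force $\bold X$ to be a translate of $\bold X^{\mathrm{eq}}$.
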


\begin{proof}
Denoting with \(\Fff(L_i, L_j)\coloneqq \Ff(L_i, L_j,0)\) and using \Cref{lemma: min per c=0} we have
\begin{equation*}
    \F^0_p \left[\bold{L}\right] =\sum_{i=1}^T \sum_{j=1}^T \Ff(L_i, L_j,c_{ij}) \geq \sum_{i=1}^T\sum_{j=1}^T \Fff(L_i,L_j) \eqqcolon \FF\left[\bold{L}\right].
\end{equation*}
Firstly, we prove convexity of the function 
\[
	\Fff(\ell,\ell') = \frac{2}{(p+1)(p+2)} \left[ (\ell+\ell')^{p+2}-\left|\ell-\ell'\right|^{p+2}\right]
\] 
with respect to the two variables. Indeed, taking the derivative with respect to \(\ell\) (and by symmetry the same holds for \(\ell'\)) we get
\[
    \frac{\mathrm{d}}{\mathrm{d}\ell}\Fff(\ell,\ell')=\frac{2}{p+1}\left[(\ell+\ell')^{p+1}-(\ell-\ell')|\ell-\ell'|^p\right]
\]
and
\[
    \frac{\mathrm{d}^2}{\mathrm{d}\ell^2}\Fff(\ell,\ell')=2\left[(\ell+\ell')^p-\left|\ell-\ell'\right|^{p}\right].
\]
Both derivatives are continuous and the second derivative is strictly positive by virtue of the inequality \(\ell+\ell'>\left|\ell-\ell'\right|\) true for \(\ell,\ell'>0\). 
Using convexity of \(\Fff\) and Jensen inequality we can make the following considerations on \(\FF\):
\begin{align*}
    \frac{1}{T^2}\FF[\bold L] &= \frac{1}{T} \sum_{i=1}^T \frac{1}{T}\sum_{j=1}^T \Fff(L_i,L_j) \geq \frac{1}{T} \sum_{i=1}^T \Fff\left(L_i,\frac{1}{T}\sum_{j=1}^T L_j\right) \\
    & = \frac{1}{T} \sum_{i=1}^T \Fff\left(L_i,\frac{1}{2}\right) \geq \Fff\left(\frac{1}{T} \sum_{i=1}^T L_i,\frac{1}{2}\right) \\
    &= \Fff\left(\frac{1}{2},\frac{1}{2}\right) = \frac{1}{T^2} \sum_{i=1}^T\sum_{j=1}^T\Fff\left(\frac{1}{2},\frac{1}{2}\right) = \frac{1}{T^2} \FF[\bold{L}^\mathrm{eq}],
\end{align*}
where \(\bold L^\mathrm{eq}\) is the vector of lengths associated to the configuration \(\bold X^\mathrm{eq}\).
This proves that \(\bold L^{\mathrm{eq}}\) minimises \(\FF\), and since the minimum of \(\Ff(L_i,L_j,c_{ij})\) is achieved when all \(c_{ij}=0\), which is precisely the case for \(\bold X^{\mathrm{eq}}\), the claim is proven.
\end{proof}

\begingroup
\renewcommand{\addcontentsline}[3]{}
\section*{Acknowledgments}
\noindent We wish to express our deepest gratitude to our PhD supervisor Marcello Ponsiglione and Dr. Lucia De Luca, for their constant guidance, insightful discussions, and invaluable support during the preparation of this manuscript.
\endgroup

\bibliographystyle{plain}
\bibliography{Bibliografia}

\par\vspace{2\baselineskip}
\small


(G. Pini) \textsc{Dip. di Matematica, Univ. Roma-I \virgolette{La Sapienza}, Piazzale Aldo Moro 5, 00185, Roma, Italy} \\
\textit{E-mail address, G. Pini:} \texttt{giovanni.pini@uniroma1.it}

\par\medskip 

(F. Santilli) \textsc{Dip. di Matematica, Univ. Roma-I \virgolette{La Sapienza}, Piazzale Aldo Moro 5, 00185, Roma, Italy} \\
\textit{E-mail address, F. Santilli:} \texttt{francesco.santilli@uniroma1.it}

\end{document}